\def\epsilon{\varepsilon}
\def\phi{\varphi}
\def\N{\mathbb{N}}
\def\Z{\mathbb{Z}}
\def\R{\mathbb{R}} 
\def\T{\mathbb{T}}
\def\S{\mathbb{S}}
\def\II{\parbox[][0.6cm][c]{0cm}{\ }}
\newtheorem{thm}{Theorem}[section]
\newtheorem{prop}{Proposition}[section]
\newtheorem{lem}{Lemma}[section]
\numberwithin{equation}{section}
\theoremstyle{remark}
{\vskip 0.5cm}
\newtheorem{rque}{\textbf{Remark}}[section]{\vskip 0.5cm}
\newtheorem*{ack}{\textbf{Acknowledgements}}{\vskip 0.5cm} 
\title{On the controllability of the Vlasov-Poisson system in the presence of external force fields}
\author{Olivier Glass\footnote{Ceremade, Université Paris-Dauphine (glass@ceremade.dauphine.fr)} \quad \quad \quad \quad Daniel Han-Kwan\footnote{Département de Mathématiques et Applications, Ecole Normale Supérieure (hankwan@dma.ens.fr)} }
\begin{document}
 \maketitle
 \begin{abstract}
In this work, we are interested in the controllability of Vlasov-Poisson systems in the presence of an external force field (namely a bounded force field or a magnetic field), by means of a local interior control. We are able to extend the results of \cite{OG03}, where the only present force was the self-consistent electric field.
 \end{abstract}

\section{Introduction and main results}
We consider the controllability of the Vlasov-Poisson system in the periodic domain $\T^n$ (where $n$ is the space dimension), which describes the evolution of a population of electrons in a neutralizing background of fixed ions, under the influence of a self-generated electric field. The control questions are addressed by means of an interior control located in an open set $\omega$ of the domain, which is a priori arbitrary. We assume in this paper that the charged particles evolve with the influence of an additional \emph{fixed} external force, denoted by $F(t,x,v)$ (at least with Lipschitz regularity and a sublinear growth at infinity in velocity). The equations read:
  \begin{equation}
 \label{Vlasov}
 \partial_t f +v.\nabla_x f +  F(t,x,v).\nabla_v f + \nabla_x \Phi .\nabla_v f=\mathbbm{1}_\omega G, \quad x\in \mathbb{T}^n, \quad v \in \mathbb{R}^n
 \end{equation} 
  \begin{equation}
  \label{Poisson}
 \Delta_x \Phi = \int_{\mathbb{R}^n} f dv -\int_{\T^{n} \times \mathbb{R}^n} f dv dx,
 \end{equation} 
  \begin{equation}
  \label{CondInitiale}
 f_{\vert t=0}= f_0.
 \end{equation}
In these equations, $f(t,x,v)$ is the so-called distribution function, which describes the density of particles at time $t\in \R^+$, at position $x \in \T^n$ and velocity $v \in \R^n$. The initial density distribution $f_0(x,v)$ is a non-negative integrable function. The right-hand side of the transport equation $\mathbbm{1}_\omega G$ is a source term describing emission and absorption of particles, supported in $\omega$. Moreover, to preserve global neutrality, $G$ has to satisfy the following constraint:
\[
\forall t \in \R^+, \quad \int_{\T^n \times \R^n} \mathbbm{1}_\omega G \, dv \, dx =0.
\]
We normalize here the torus so that its Lebesgue measure is $1$. 
\par
The controllability problem is the following. Let $f_1(x,v)$ be another non-negative integrable function satisfying $f_1 \geq 0$ and 
\[
\int f_1 dv dx =\int f_0 dv dx,
\] 
and let $T>0$ a fixed time. The question is: is it possible to find a control $G$ such that:
 \begin{equation}
 \label{CondFinale}
 f(T,x,v)= f_1(x,v).
 \end{equation}
When the only acting force is the self-consistent electric field (that is when $F=0$), the first author provided in \cite{OG03} some positive answers to the question. More specifically, two kinds of results were obtained: first local controllability (which means that $f_0$ and $f_1$ are small in some weighted $L^\infty$ norm) were obtained in two dimensions, for an arbitrary control zone $\omega$. Global controllability results (without restriction on the size of $f_0$ and $f_1$)  in any dimension was also obtained, provided that the control zone $\omega$ contains the
image of a hyperplane of $\R^n$ by the canonical surjection (which is called a hyperplane of the torus in \cite{OG03}).
 The proofs of these results relied on the nice geometry of free transport in the torus: we shall recall their principle in a subsequent paragraph. \par
When one considers a non-trivial external force $F $, the underlying dynamical system is more complicated; thus the characteristics can have a complex geometry, making the generalization not straightforward from the case $F=0$. \par
\ \par
In this paper, we are able to extend results of \cite{OG03} for the two following classes of force fields:
\begin{itemize}
\item {The case of bounded force fields} $F \in L^\infty_{t}W^{1,\infty}_{x,v}$.
\item {In two dimensions, the case of Lorentz forces for magnetic fields with a fixed direction} $F(x,v)= b(x) (v_2, -v_1)$ with $b$ satisfying a certain geometric condition (which will be precisely described later).
\end{itemize}

As we will see later on, the treatment of these two cases are rather different (in particular for what concerns high velocities) and involve different strategies. As a matter of fact, we were not able to find a general strategy which would allow to treat all forces $F$ which are Lipschitz with a sublinear growth at infinity in velocity.

\vspace{10pt}

Let us now briefly review the existing results on the Cauchy theory for the Vlasov-Poisson equation posed in the whole space $\R^n$ or in the torus $\T^n$.
In this work, we will only focus on strong solutions (at least with a $\mathcal{C}^1$ regularity in all variables); in the case where $F=0$, the first results for such solutions are due Ukai and Okabe \cite{UO} who have proved global in time existence in two dimensions and  local in time existence in three dimensions, in the whole space setting. One can readily check that the proof is the same for the torus case. In three dimensions, in the whole space, global in time results were proved independently by Pfaffelmoser \cite{Pfa} and Lions and Perthame \cite{LP}. The results of Pfaffelmoser were adapted to the torus case by Batt and Rein \cite{BR}. Concerning global weak solutions, the main result is due to Arsenev \cite{Ar}. One can observe that all these results can be easily adapted to incorporate an additional external force $F$ (with $F$ satisfying the previous regularity assumptions).

We will only rely on the construction due to Ukai and Okabe in the following. We are now in position to precisely state the main results proved in this paper.

\subsection{Results in the bounded external field case}

We first consider the case where $F \in L^\infty_{t}W^{1,\infty}_{x,v}$. In this case, we are able to exactly extend those for $F=0$, that are a local and a global controllability results. The local result concerns only the dimension $n=2$, but is valid for any control zone $\omega$. On the contrary, the global result is valid for any $n$, but requires a stronger geometric assumption on the control zone $\omega$.

\begin{thm}[Local result]
\label{Theo:Bounded}
Let $n=2$. Let $F(t,x,v) \in L^\infty_{t}W^{1,\infty}_{x,v}$. Let  $\gamma >2$ and $T>0$. There exist $\kappa, \kappa' > 0$ small enough such that the following holds. Let
$f_{0}$ and $f_{1}$ be two functions in $C^{1}(\T^{2} \times \R^{2})
\cap W^{1,\infty}(\T^{2} \times \R^{2})$, satisfying the condition
that for any $(x,v) \in \T^{2} \times \R^{2}$ and $i \in \{ 0 , 1
\}$,
\begin{equation}
\label{DecroissanceInfini}
\left\{ \begin{array}{l}
{\II |f_{i}(x,v) | \leq \kappa (1 + |v|)^{-\gamma-1},} \\
{\II | \nabla_{x} f_{i} | + | \nabla_{v} f_{i} | \leq \kappa' (1 +
|v|)^{-\gamma},}
\end{array} \right.
\end{equation}
and
\begin{equation}
\label{MemeDensiteNeutralisante}
\int_{\T^{n} \times \R^{n}} f_0 = \int_{\T^{n} \times \R^{n}} f_1.
\end{equation}
%
%
Then there exists a control $G \in C^{0}([0,T] \times \T^{2} \times \R^{2})$, such that the solution of
(\ref{Vlasov})-(\ref{Poisson}) and (\ref{CondInitiale}) exists, is
unique, and satisfies (\ref{CondFinale}).
\end{thm}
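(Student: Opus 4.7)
I would follow the strategy of \cite{OG03} for the unperturbed case $F\equiv 0$, treating the bounded external force as a perturbation. The smallness assumption (\ref{DecroissanceInfini}) makes the self-consistent field $\nabla_x\Phi$ a small Lipschitz perturbation of $F$, to be closed via a fixed-point argument; the heart of the proof is thus the controllability of the forced linear transport equation
\[
\partial_t f + v\cdot\nabla_x f + F(t,x,v)\cdot\nabla_v f = \mathbbm{1}_\omega G.
\]

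The first step is a geometric lemma for the characteristic flow $\dot X = V$, $\dot V = F(t,X,V)$. Since $F \in L^\infty_t W^{1,\infty}_{x,v}$, a Gronwall estimate gives $|X(t;0,x,v) - (x+tv)| \leq C\,t^2\,\|F\|_\infty$ uniformly in $(x,v)$, so on $[0,T]$ each forced trajectory is a bounded perturbation of a straight segment. For $|v|$ large, the classical equidistribution of irrational lines in $\T^2$ then implies that, outside an exceptional set of arbitrarily small measure, the trajectory visits $\omega$ within time $\leq T$. For bounded velocities, a compactness/transversality argument exploiting the openness of $\omega$ and the bounded contribution of $F$ yields the same conclusion up to an arbitrarily small loss. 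The net statement I would aim for: for some $T^\star<T$, the set of $(x,v)$ whose characteristic misses $\omega$ during $[0,T^\star]$ has arbitrarily small mass in the weighted norm dictated by (\ref{DecroissanceInfini}).

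Given this lemma, I would build $G$ in two stages following the absorption/emission scheme of \cite{OG03}. On $[0,T/2]$ the control absorbs $f_0$ along its forward characteristics as they cross $\omega$; on $[T/2,T]$ it emits a density whose forward flow (from $\omega$) reconstructs $f_1$ at time $T$. The decay $\gamma>2$ keeps all weighted $C^1\cap W^{1,\infty}$ norms finite, and the exceptional set from the geometric lemma contributes only a small remainder to be absorbed by the fixed point. The full nonlinear problem is then handled by a Banach fixed point on a small weighted ball: given $E=-\nabla_x\Phi$, construct a control for the linear transport with drift $F+E$ (the smallness of $E$ preserves the geometric lemma), solve the Poisson equation, and iterate. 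The parameters $\kappa,\kappa'$ are tuned so that the map is a contraction on the ball defined by (\ref{DecroissanceInfini}), and one uses Ukai--Okabe to guarantee existence and uniqueness at each iteration.

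The main obstacle I expect is the geometric lemma for \emph{low velocities}: the force $F$ can bend trajectories substantially and may even trap them in regions avoiding $\omega$, so the free-transport ergodicity argument does not transfer directly. The workaround is to exploit that $F$ is uniformly bounded in $W^{1,\infty}$, preventing rapid escape, together with the openness of $\omega$ and a waiting-time argument. This is also precisely the place where the two-dimensional restriction is essential, since equidistribution of straight lines on the torus for generic directions is a phenomenon specific to $n=2$ in this form.
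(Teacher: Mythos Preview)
There is a genuine gap in your plan: the ``geometric lemma'' you propose for the characteristics of the pure $F$-flow cannot be established, and this is not a technical difficulty but a real obstruction. For low velocities, a bounded $F$ can trap particles away from $\omega$ indefinitely (think of $F$ independent of $v$ and periodic in time, creating closed orbits disjoint from $\omega$); no ``waiting-time'' or compactness argument will overcome this. Even for high velocities your scheme is incomplete: the rational directions form a zero-measure set in $v$, but their image under the characteristic flow need not be small in the weighted $C^0$ norms you are working with, and you cannot simply ``absorb the exceptional set in the fixed point'' --- you would need to actually drive those characteristics into $\omega$, and nothing in your argument does that.

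What the paper does instead is not to rely on the $F$-flow alone, but to use the return method: it actively designs a reference potential $\overline\varphi$ (harmonic outside a small ball in $\omega$) whose gradient, added to $F$, forces \emph{every} characteristic through $\omega$. For high velocities one takes the potential from \cite{OG03} and rescales it to act on a very short time interval $\tau'$; since the Hessian scales like $(\tau')^{-2}$ while the perturbation due to $F$ contributes only $O(\tau')$, Gronwall shows the $F$-characteristics stay close to the $F=0$ ones (Proposition~\ref{PropGV}). For low velocities one does not wait for particles to drift into $\omega$: one builds a second potential that \emph{accelerates} every particle in $\T^2\times B(0,M)$ to modulus $>M+1$ (Proposition~\ref{PropAccelerePartout}), reducing to the high-velocity case. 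This yields that \emph{all} characteristics meet the absorption set $\gamma^{3-}$, not merely most of them. The closure of the argument is by Schauder (not Banach contraction), on a domain defined by weighted H\"older bounds; the smallness parameters $\kappa,\kappa'$ enter to keep $\nabla\phi^g$ close enough to $\nabla\overline\varphi$ that the characteristics still meet $\gamma^{3-}$.
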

\begin{thm}[Global result]
\label{Theo:BoundedGlobal}
Let $\gamma >n$ and $\kappa, \kappa' > 0$. Suppose that the regular
open set $\omega$ contains the image of a hyperplane in $\R^n$ by the
canonical surjection, supposed to be closed. Let $f_{0}$ and $f_{1}$ be two functions in
$C^{1}(\T^{n} \times \R^{n})$, satisfying the conditions
\begin{equation}
\label{DecroissanceInfini2}
\left\{ \begin{array}{l}
{\II |f_{i}(x,v) | \leq \kappa (1 + |v|)^{-\gamma-2},} \\
{\II | \nabla_{x} f_{i} | + | \nabla_{v} f_{i} | \leq \kappa' (1 +
|v|)^{-\gamma},}
\end{array} \right.
\end{equation}
and (\ref{MemeDensiteNeutralisante}).
Then there exists a control $G \in C^{0}([0,T] \times \T^{n} \times \R^{n})$,
such that the solution of
(\ref{Vlasov})-(\ref{Poisson}) and (\ref{CondInitiale}) exists, is
unique, and satisfies (\ref{CondFinale}).
\end{thm}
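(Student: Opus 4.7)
The plan is to reduce the global control problem to a single ``drain'' problem via gluing, and then adapt the absorption-along-characteristics scheme of \cite{OG03}, treating the bounded force $F$ as a perturbation that must be absorbed by the design of the ansatz.

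\emph{Reduction.} First I would reduce the statement to: given $f_0$ satisfying the decay bounds (\ref{DecroissanceInfini2}), construct a $C^0$ control on $[0, T/2]$ steering $f_0$ to the zero state. Indeed, the map $\tilde f(t, x, v) := f(T-t, x, -v)$ turns the Vlasov-Poisson system with external force $F$ into the same system with $\tilde F(t, x, v) = F(T-t, x, -v)$, which still lies in $L^\infty_t W^{1,\infty}_{x,v}$; driving $f_1$ backward from time $T$ to $0$ at time $T/2$ is therefore equivalent to a forward drain for the reversed system. Concatenating the two drained half-trajectories at $t = T/2$, where both equal $0$, produces a $C^0$ control on $[0,T]$ realizing the transition $f_0 \to f_1$. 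The resulting controlled solution is then the unique one, by the Ukai-Okabe construction extended to an external force in $W^{1,\infty}_{x,v}$.

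\emph{Absorption ansatz.} Let $H$ denote the closed hyperplane contained in $\omega$; by compactness of $q(H) \subset \T^n$ and openness of $\omega$ there exists $\eta > 0$ such that an $\eta$-tubular neighborhood of $q(H)$ lies in $\omega$. Following \cite{OG03}, for a candidate $f$ I would introduce the characteristic flow
\[
\dot X = V, \qquad \dot V = F(t, X, V) + \nabla_x \Phi[f](t, X),
\]
where $\Phi[f]$ is the self-consistent potential solving (\ref{Poisson}), and along each characteristic $(X, V)(t; x_0, v_0)$ set $f(t, X(t), V(t)) = f_0(x_0, v_0)\,\chi(t; x_0, v_0)$, with $\chi$ a smooth cutoff equal to $1$ before the characteristic first enters $\omega$, decreasing to $0$ as it traverses $\omega$, and remaining $0$ afterwards. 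Outside $\omega$ the Vlasov equation is then satisfied exactly, so the residual $G := \partial_t f + v \cdot \nabla_x f + (F + \nabla_x \Phi) \cdot \nabla_v f$ is automatically supported in $\omega \times \R^n$ and defines the control. The map $f \mapsto f$ thus obtained is closed by a Schauder fixed point in a weighted space encoding the decay $(1+|v|)^{-\gamma-2}$; the hypothesis $\gamma > n$ ensures $\rho \in L^\infty$ and, by elliptic regularity, $\nabla_x \Phi \in W^{1,\infty}$, so the flow is well-defined and $C^1$.

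\emph{Crossing the hyperplane; main obstacle.} The scheme works provided every characteristic starting from $(x_0, v_0) \in \T^n \times \R^n$ enters $\omega$ before time $T/2$. For $|v_0| \geq R$ with $R$ large, the perturbation of the straight line $x_0 + tv_0$ by the uniformly bounded terms $F$ and $\nabla_x \Phi$ is of size $O(t^2)$, and since the straight line crosses $q(H)$ in time $\lesssim |v_0|^{-1}$, so does the perturbed trajectory. For $|v_0| \leq R$ the free $F$-characteristics no longer suffice: the trajectory may remain nearly parallel to $H$, and crossing the $\eta$-slab must be engineered. Here I would mimic the low-velocity step of \cite{OG03}, designing the ansatz so that a prescribed component of $\nabla_x \Phi$ drives slow particles transversally across the slab within $[0, T/2]$. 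The main obstacle is precisely this low-velocity regime: the bounded Lipschitz force $F$ produces an $O(1)$ acceleration on $[0, T/2]$ that may corrupt the delicate guidance. Absorbing this perturbation requires enlarging the designed part of $\nabla_x \Phi$ (equivalently, the auxiliary piece of $f$ placed in $\omega$ at intermediate times) so that the transversal drive dominates, while keeping the fixed-point argument closed; since $F$ is merely a bounded $W^{1,\infty}$ perturbation of the $F=0$ case, this adaptation is what makes the theorem go through.
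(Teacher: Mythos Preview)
Your outline captures the absorption-on-characteristics idea and the reduction by time reversal, but it misses the two mechanisms that make the result \emph{global} rather than local, and one step is actually wrong.

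\textbf{The high-velocity argument is incorrect.} You claim that for $|v_0|\geq R$ the straight line $x_0+tv_0$ crosses $q(H)$ in time $\lesssim |v_0|^{-1}$. This is false: if $v_0$ is tangent to $H$ (or nearly so), the free trajectory stays parallel to the hyperplane forever and never enters the slab. The paper does \emph{not} split into high and low velocities here. Instead it uses the hyperplane geometry to build a reference force $\overline{\mathcal E}(t,x)=\mu\,\mathcal Y(t)\,n_H$ which is a \emph{spatially constant} push in the normal direction $n_H$; this accelerates every particle transversally, regardless of $|v|$, and forces every reference characteristic to cross the slab (Lemma~\ref{LemExistMu}).

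\textbf{The global part is missing entirely.} Your fixed-point scheme can only close if the self-consistent field $\nabla\Phi[f]$ is close enough to the reference one that the true characteristics stay near the reference characteristics. With no smallness on $f_0$, there is no reason for $\|\nabla\Phi[f]-\nabla\overline\phi\|$ to be small, and the argument collapses to the local theorem. The paper resolves this with the scaling $f_0^\lambda(x,v)=\lambda^{2-n}f_0(x,v/\lambda)$, $F^\lambda(t,x,v)=\lambda^2 F(\lambda t,x,v/\lambda)$: for small $\lambda$, both $F^\lambda$ and the self-consistent field become $O(\lambda)$ perturbations. But this alone is not enough, because $f_0^\lambda$ concentrates in velocity and $\int f\,dv$ could blow up. What saves the day is a \emph{non-concentration} property of the reference flow: since $\overline{\mathcal E}$ is independent of $x$, the map $x\mapsto \overline X(t,0,x,0)$ is a rigid translation, hence $|\overline X(t,0,x,0)-\overline X(t,0,y,0)|=|x-y|$, and this survives $O(\varepsilon+\lambda)$ perturbation (Lemma~\ref{LemNonFocalization}). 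From this one extracts the key estimate $|v-W_t^g(x)|\leq K|V^g(0,t,x,v)|$, which after the change of variables yields $\|\int(\mathcal V[g]-\overline f)\,dv\|_{L^\infty}\leq C\lambda^2$, closing the fixed point for $\lambda$ small. Your proposal has neither the $\lambda$-scaling nor the non-concentration argument, so as written it cannot deliver a global result.
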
 \par
\subsection{Results in the magnetic field case}

Let us now state our result when $F$ represents an external magnetic field.
For all results dealing with this case, we will systematically assume that the space dimension $n=2$. First, let us explain the physical meaning of the system under consideration. In the physical space $\R^3$, let $(e_1, e_2, e_3)$ a fixed orthonormal base. We consider the stationary magnetic field $B$, with fixed direction $e_3$:
\[
B(x)= b(x) e_3,
\] 
where $b$ is a Lipschitz function on $\T^3$. Since $B$ has to satisfy the divergence free condition, this implies that $b$ only depends on $x_1$ and $x_2$. The associated Lorentz force writes:
\[
F= v \wedge B(x)= b(x) v^\perp,
\]
denoting $v^\perp = (v_2,-v_1, 0)$. We then restrict to distribution functions which do not depend on $x_3$ and $v_3$, so that we can restrict the study of the dynamics to the bidimensional plane $(e_1,e_2)$.
For the sake of readability, we rewrite the Vlasov-Poisson system that we study:

 \begin{equation}
 \label{VlasovCM}
 \partial_t f +v.\nabla_x f +  b(x)v^\perp.\nabla_v f + \nabla_x \Phi .\nabla_v f=\mathbbm{1}_\omega G, \quad x\in \mathbb{T}^2, \quad v \in \mathbb{R}^2
 \end{equation} 
  \begin{equation}
  \label{PoissonCM}
 \Delta_x \Phi = \int_{\mathbb{R}^2} f dv -\int_{\T^2 \times \mathbb{R}^2} f dv dx,
 \end{equation} 
  \begin{equation}
  \label{CondInitialeCM}
 f_{\vert t=0}= f_0.
 \end{equation}
 
 We now precisely state the geometric assumption we have to make on $b$.
\begin{itemize}
\item {\bf Fixed sign.} We  assume that $b$ has a fixed (say non-negative) sign. 

\item {\bf Geometric control condition.} We assume that there exists $K$ a compact set of $\mathbb{T}^2$ on which $b>0$ and which satisfies the geometric control condition:
\begin{multline} \label{GeometricCondition}
\text{For any } x \in \mathbb{T}^2 \text{ and any direction } e \in \mathbb{S}^{1}, \\
\text{ there exists } y \in \R^{+} \text{ such that } x+ye \in K.
\end{multline}

\end{itemize}

One can notice that the geometric control condition corresponds to the geometric control condition of Bardos, Lebeau and Rauch \cite{BLR} for the controllability of the wave equation. Let us underline however that here this condition concerns the magnetic field only, and not the control zone $\omega$. As we will see, this condition assures that the particles are sufficiently influenced by the magnetic field. \par
\ \par
\noindent
{\bf Examples.} Let us give some examples, where this geometric assumption is satisfied.
\begin{enumerate}
\item The most simple example that one can have in mind is the case where $b$ is positive on $\T^2$. Then taking $K= \T^2$, the geometric assumption is satisfied. Obviously, this includes the case where $b$ is a positive constant.
\item Assume that $b$ is non-negative and has finite number $N$ of zeros $x_1, ..., x_N \in \T^2$. Then there is $r$ small enough such that $K=\T^2 \backslash \cup_{i=1}^N B(x_i,r)$ is appropriate. One could also extend this consideration to the case where the zeros of $b$ are given by a sequence $(x_i)_{i \in \N}$ with a finite number of cluster points.

\item We can consider some $b$ which is identically equal to $0$ in a large set of the torus, provided the existence of some $K$ satisfying the geometric control condition. For instance, if we identify $\T^{2}$ with $[0,1]^{2}$ with periodic conditions, a subset $K$ containing $(\{ 0\} \times [0,1]) \cup ([0,1] \times \{0\})$ satisfies the geometric assumption.
\end{enumerate}

\ \par
With these particular magnetic fields, we are able to prove a local controllability result, which is similar to Theorem \ref{Theo:Bounded} (but we emphasize once again that the proofs will be rather different).
\begin{thm}
\label{Theo:Mag}
Let $b$ satisfying the geometric assumption \eqref{GeometricCondition}. 
Let $\gamma >2$ and $T>0$. There exist $\kappa, \kappa' > 0$ such that the following holds. Let
$f_{0}$ and $f_{1}$ be two functions in $C^{1}(\T^{2} \times \R^{2})
\cap W^{1,\infty}(\T^{2} \times \R^{2})$, satisfying the condition
that for any $(x,v) \in \T^{2} \times \R^{2}$ and $i \in \{ 0 , 1
\}$,
\begin{equation}
\label{DecroissanceInfiniCM}
\left\{ \begin{array}{l}
{\II |f_{i}(x,v) | \leq \kappa (1 + |v|)^{-\gamma-1},} \\
{\II | \nabla_{x} f_{i} | + | \nabla_{v} f_{i} | \leq \kappa' (1 +
|v|)^{-\gamma},}
\end{array} \right.
\end{equation}
and
\begin{equation}
\label{MemeDensiteNeutralisanteCM}
\int_{\T^{n} \times \R^{n}} f_0 = \int_{\T^{n} \times \R^{n}} f_1.
\end{equation}
Then there exists a control $G \in C^{0}([0,T] \times \T^{2} \times \R^{2})$
, such that the solution of
(\ref{VlasovCM})-(\ref{PoissonCM}) and (\ref{CondInitialeCM}) exists, is
unique, and satisfies $f(T,x,v)=f_1$.
\end{thm}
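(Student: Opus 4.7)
The plan is to follow the return method used in \cite{OG03}, adapted to the magnetic setting. I would first construct a reference controlled trajectory $(\bar f, \bar\Phi, \bar G)$ of \eqref{VlasovCM}--\eqref{PoissonCM} on $[0,T]$ satisfying $\bar f(0,\cdot) = \bar f(T,\cdot) = 0$ but with $\bar f \not\equiv 0$ in between, designed so that the characteristics of the associated transport equation visit $\omega$ within time $T$ for every $(x_0, v_0) \in \T^2 \times \R^2$. Writing $f = \bar f + g$ and setting up a Schauder/contraction fixed-point argument in the weighted $L^\infty$ space dictated by \eqref{DecroissanceInfiniCM}, one then produces a control $G$ close to $\bar G$ such that the full nonlinear solution starting from $f_0$ reaches $f_1$ at time $T$. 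The decay exponent $\gamma > 2$ is tuned so that the macroscopic density $\rho = \int f\,dv$ is bounded and $\nabla\Phi$ is Lipschitz via elliptic regularity for \eqref{PoissonCM}; the smallness of $f_0$ and $f_1$ is what makes the nonlinear perturbation controllable.

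The heart of the matter is then a geometric lemma asserting that every characteristic of the reference flow meets $\omega$ within a time $T^* \leq T$, uniformly in $(x_0, v_0)$. For high velocities the magnetic term $b(x)v^\perp$ dominates the dynamics, and the crucial fact is that the cyclotron angular velocity $\dot\theta = -b(x)$ is \emph{independent} of $|v|$: hence in any region where $b \geq b_0 > 0$, the velocity direction sweeps a full turn in time at most $2\pi/b_0$. The geometric control condition \eqref{GeometricCondition} forces any straight-line segment to meet $K$ within a bounded length $L_0$, so a fast particle alternates short straight excursions (of duration $\leq L_0/|v|$) with rotation arcs inside $K$, and its orbit fills a macroscopic region of $\T^2$ in time bounded independently of $|v|$, necessarily meeting any fixed open $\omega$. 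For small $|v|$ the argument is different: the reference field $\nabla\bar\Phi$ is designed to produce a nonzero drift everywhere outside $\omega$, thereby accelerating even initially slow particles toward $\omega$ within time $T$.

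Once the geometric lemma is in hand, the control is built by the standard two-step absorption/emission scheme: on $[0,T/2]$ one absorbs each particle the first time its characteristic meets $\omega$, driving the state toward $\bar f(T/2,\cdot)$, while on $[T/2,T]$ one emits particles in a symmetric, time-reversed manner anchored on $f_1$. The coupling with \eqref{PoissonCM} is closed by a fixed-point argument in the same spirit as in \cite{OG03}, using the Lipschitz dependence of $\nabla\Phi$ on $f$ inherited from the weighted bounds.

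The principal obstacle is the unbounded growth of the magnetic force $b(x)v^\perp$ at large $|v|$, which genuinely distinguishes this setting from the bounded-field case of Theorem \ref{Theo:Bounded}. Stability estimates on the characteristics must rely on the conservation of $|v|$ under the pure magnetic flow, combined with the smallness of $\nabla\Phi - \nabla\bar\Phi$, to control the deviation caused by the self-consistent field. Quantifying the geometric lemma uniformly across all velocity scales---small, intermediate and large---is the delicate point, and is the sole reason why the geometric control condition \eqref{GeometricCondition} on $b$ is imposed; in particular, no geometric hypothesis on $\omega$ beyond being open is needed.
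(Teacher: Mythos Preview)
Your proposal is correct and follows essentially the same route as the paper: return method with a reference solution, the observation that for large $|v|$ the magnetic rotation $\theta'=b(x)$ (velocity-independent) together with the geometric condition on $K$ forces high-speed characteristics to reach $\omega$ without any auxiliary potential, an accelerating reference potential $\overline{\varphi}$ to push low-velocity particles into the high-velocity regime, and then a Schauder fixed point with absorption on $\gamma^{-}$ exactly as in Section~\ref{Sec:BEF}. The only refinement worth noting is that the paper's high-velocity argument does not show the orbit ``fills a macroscopic region'' but rather uses the more tractable fact that only finitely many directions in $\S^1$ give half-lines missing $B(x_0,r_0/8)$, so once the angle rotates out of small neighborhoods of these bad directions the (locally almost straight) trajectory hits the ball; your low-velocity description should also be read as ``accelerate to large $|v|$ and reapply the high-velocity case'' rather than a direct drift toward $\omega$.
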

%
%
%
%
\subsection{Organization of the paper}
The paper is organized as follows:
first, in Section \ref{Sec:strategie}, we will recall some considerations on the Vlasov-Poisson equation and will explain the general strategy of the proofs. 
Then, we prove Theorem \ref{Theo:Bounded} in Section \ref{Sec:BEF} and Theorem \ref{Theo:BoundedGlobal} in Section \ref{Sec:BEFglobal}, for what concerns the bounded external field case. Finally, in Section \ref{Sec:CM}, we prove Theorem \ref{Theo:Mag} on the local controllability in the external magnetic field case.
%
\section{Strategy of the proofs}
\label{Sec:strategie}
%
%
\subsection{Notations}
%
For $T>0$, we
denote $Q_{T}:=[0,T] \times \T^{n} \times \R^{n}$, and
$\Omega_{T}:=[0,T] \times \T^{n}$. For a domain $\Omega$, we write
also $C_{b}^{l}(\Omega)$, for $l \in \N$, for the set $C^{l}(\Omega) \cap
W^{l,\infty}(\Omega)$. All the same, $C_{b}^{l+\sigma}(\Omega)$ for
$\sigma \in (0,1)$ stands for the set of $C^{l}$ functions with
bounded $\sigma$-H{\"o}lder $l$-th derivatives. Also, $C_{b}^{\sigma,
l+\sigma'}(\Omega_{T})$ (resp. $C_{b}^{\sigma, l+\sigma'}(Q_{T})$), for $l
\in \N$, $\sigma, \sigma'\in [0,1)$ is the set of continuous functions
in $\Omega_{T}$ (resp. $Q_T$), which are $C^{l}$ with respect to $x$
(resp. to $(x,v)$), and which $l$-th derivatives are all
$C^{\sigma}_{b}$ with respect to $t$ and $C^{\sigma'}_{b}$ with respect to $x$
(resp. to $(x,v)$). \par
For $x$ in $\T^{n}$ and $r >0$, we denote by $B(x,r)$ the open ball
with center $x$ and radius $r$, and by $S(x,r)$ the corresponding
sphere. The radii will always be chosen small enough in order that
$S(x,r)$ does not intersect itself (that is $r < 1/2$ in the standard
torus).

\subsection{The case $F=0$, obstructions to controllability}

In this paragraph,we focus on the case $F=0$, following \cite{OG03}. Let us consider the linearized equation around the trivial state $(\overline{f}, \overline{\Phi})=(0,0)$.
The linearized equation happens to be the free transport equation, which simply reads:
\[
\partial_t f + v.\nabla_x f = \mathbbm{1}_\omega g.
\]
By Duhamel's formula, we obtain the explicit representation for $f$:
\begin{equation}
f(t,x,v)=f_0(x-tv,v) + \int_0^t  (\mathbbm{1}_\omega g)(s,x-(t-s)v,v)ds,
\end{equation}
from which one can observe that there are two types of obstruction to controllability:
\begin{itemize}
\item[--] (small velocities) The second obstruction concerns the small velocities. The velocity of a particle can have a good direction, but if it is not high enough, then it will not be able to reach zone in the desired time, see Figure \ref{Fig2:ObstructionsVlasov}.
\item[--] (large velocities, wrong direction) The first obstruction is of geometric control type as in \cite{BLR} for what concerns the wave equation: if a particle has initially a wrong direction, then it will never reach the control zone, and thus we cannot influence its trajectory, see again Figure \ref{Fig2:ObstructionsVlasov}.
\begin{figure}[!ht]
\begin{center}
{\resizebox{10cm}{!}{\input{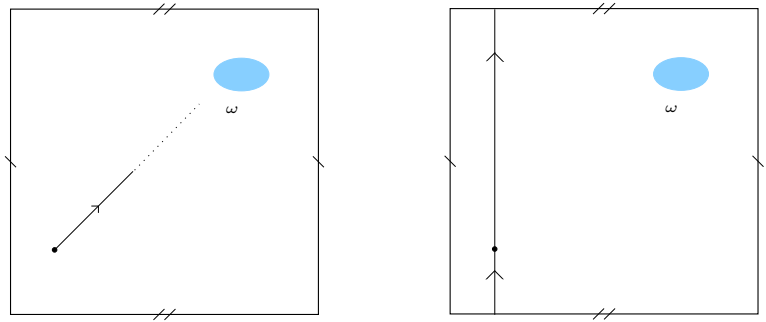}}}
\end{center}
\caption{Obstructions for small and large velocities}
\label{Fig2:ObstructionsVlasov}
\end{figure}
\end{itemize}
It follows that in general, the linearized equation fails to be controllable.
\subsection{The return method}
In order to overcome these obstructions, the idea is to use the return method, which was introduced by Coron in \cite{C92} for the study of the stabilization of finite-dimensional systems, and then used in the context of the control of PDEs by Coron in \cite{C96} for the control of the two-dimensional Euler equation for perfect incompressible fluids. It has been used since in many different contexts of PDE control: we refer to the monograph of Coron \cite{C07} for several illustrations and references for this method. The principle is to build a reference solution $(\overline{f},\overline{\Phi})$ starting from $(0,0)$ and reaching $(0,0)$ in some fixed time, and around which the linearized equation enjoys nice controllability properties. Such a construction can be delicate, and crucially depends on the structure of the studied equation.

Here, the problem is more or less equivalent to find solutions $(\overline{f},\overline{\Phi})$ (starting from $(0,0)$ and reaching $(0,0)$) and such that the characteristics associated to $\nabla \overline{\Phi}$ satisfy:
\begin{equation}
\label{eq:bonnescarac}
\forall x \in \mathbb{T}^n, \quad \forall v \in \mathbb{R}^n, \quad \exists t \in [0,T], \quad X(t,0,x,v) \in \omega.
\end{equation}
(As a matter of fact, the characteristics will not be quite associated to $\nabla \overline{\Phi}$ inside the control zone.) \par
When no exterior force is present, the existence of such a reference solution $\overline{f}$ was proved by the first author in \cite{OG03} in two dimensions, for an arbitrary control set $\omega$. This is achieved using complex analysis tools by building harmonic potentials outside $\omega$, which allow to sufficiently influence the trajectories, so that the two previous obstructions are circumvented. This strategy distinguishes between high and low velocities, for which the relevant potentials are different.
\subsection{On the scaling properties of Vlasov-Poisson equations}
We notice that (\ref{Vlasov})-(\ref{Poisson}) is
``invariant'' by some change of scales. More precisely, when $f$ is a solution
of (\ref{Vlasov})-(\ref{Poisson}) in $[0,T] \times \T^{n} \times
\R^{n}$, then for $\lambda \not = 0$, the function
\begin{equation}
\label{ChgtEchelle}
f^{\lambda}(t,x,v) := |\lambda|^{2-n} f(\lambda t,x,v/\lambda),
\end{equation}
is still a solution of (\ref{Vlasov})-(\ref{Poisson}), in
$[0,T/\lambda] \times \T^{n} \times \R^{n}$ for the following
potential
\begin{equation}
\label{ChgtEchellePhi}
\phi^{\lambda}(t,x) := \lambda^{2} \phi(\lambda t,x).
\end{equation} 
and the external force
\begin{equation}
\label{ChgtEchelleF}
F^{\lambda}(t,x,v) := \lambda^{2} F(\lambda t,x,v/\lambda).
\end{equation} 
%
%
The choice of some particular parameters $\lambda$ will be of great help for the controllability problem.

\par
\vspace{10pt}
\noindent
{\bf The choice $\lambda=-1$.}
Using (\ref{ChgtEchelle}) with $\lambda=-1$, we observe that in order to prove Theorems \ref{Theo:Bounded} and \ref{Theo:Mag},  it is sufficient 
to prove the result for the case where $f_{1}=0$ in $[\T^{n} \backslash \omega] \times \R^{n}$.
Indeed, we observe that after imposing (\ref{ChgtEchelle}) with $\lambda=-1$, the corresponding external field remains in the same class, that is, if $F$ is bounded, then $F^{\lambda=-1}$ is still bounded (resp. if  $F$ corresponds to a magnetic field satisfying the geometric condition, then $F^{\lambda=-1}$ still  corresponds to a magnetic field satisfying the fixed sign and the geometric conditions). \par
\ \par
Then one can follow the procedure that we detail below:
\begin{itemize}
\item[--] Take $f_{0}$ as initial value and $0$ (in $(\T^n \backslash
\omega) \times \R^n$) as the final one,
\item[--] Take $(x,v) \mapsto f_{1}(x,-v)$ as initial value and again
$0$ as the final one within the force field $F(T-t,x,-v)$.
\end{itemize}
each in time $T/3$. We obtain two functions $\hat{f}_{0}$ and
$\hat{f}_{1}$. Now we may consider the function $\hat{f}$ partially defined
in $Q_T$ by
\begin{equation}
\nonumber
\left\{ \begin{array}{l}
{ \hat{f}(t,x,v) = \hat{f}_{0}(t,x,v), \ \text{ in }
[0,T/3] \times \T^{n} \times \R^{n}, } \\
{ \hat{f}(t,x,v) = 0, \ \text{ in }
[T/3,2T/3] \times [\T^{n} \backslash \omega] \times \R^{n}, } \\
{ \hat{f}(t,x,v) = \hat{f}_{1}(T-t,x,-v) \ \text{ in }
[2T/3,T] \times \T^{n} \times \R^{n}. }
\end{array} \right.
\end{equation}
Then we can complete in a regular manner $\hat{f}$ inside $[T/3,2T/3] \times \omega
\times \R^{n}$, taking care to preserve for any $t$ the value of
$\int_{\T^{n} \times \R^{n}} \hat{f}(t,x,v) dxdv$.
Finally we get a relevant solution $f$.
%
%
%
%
%
For this reason, we will systematically assume that $f_1=0$ in $[\T^{n} \backslash \omega]$ for all controllability results discussed in this work.%
\par
\vspace{10pt}
\noindent
{\bf The choice $0 < \lambda \ll1$:} The choice of the parameters in such a range is useful to prove global controllability results.
As in \cite{OG03}, it will help us in particular to prove Theorem \ref{Theo:BoundedGlobal} in the bounded external field case. The principle is that when $\lambda$ is chosen small enough then $\nabla \Phi^\lambda$ has a small $L^\infty$ small (and this is also the case for $F^\lambda$), so that we can expect characteristics for $f^\lambda$ to be close to those of some well chosen relevant reference solution. This will allow us to get rid of the smallness assumption on $f_0$. Nevertheless in order to avoid concentration effects, we will need some assumptions on the characteristics associated to the reference solution.

In the magnetic field case, we observe that $F(x,v)=b(x) v^\perp$ and thus $F^\lambda(x,v)= \lambda b(x) v^\perp$. For this reason, due to our treatment of high velocities for this case, this will not allow us to prove a global result.
%
%
%
%
%
%
%
\subsection{General strategy for external force fields $F$}

Following \cite{OG03} the main steps for proving local controllability results will be:
\par
\vspace{5pt}
\noindent
{\bf Step 1.} Build a reference solution $(\overline{f},\overline{\Phi})$ of (\ref{Vlasov})-(\ref{Poisson}) with a certain control $\overline{G}$, starting from $(0,0)$ and arriving at $(0,0)$, such that the characteristics associated to $F-\nabla \overline{\Phi}$ satisfy \eqref{eq:bonnescarac}.
\par
\vspace{5pt}
\noindent
{\bf Step 2.} Build a solution $(f,\Phi)$ close to $(\overline{f},\overline{\Phi})$, taking into account the initial condition $(f_0,\Phi_0)$ and still arriving at $(0,0)$ (outside $\omega$). This is achieved using a fixed point operator involving an absorption process in the control zone. This is where we use the smallness assumption on $f_0$.
\par
\vspace{5pt}
\noindent
The treatment of Step 2. will be quite similar to that in \cite{OG03}, although a bit more technical since we will have to take into account the geometry due to $F$. The main difference is the treatment of Step 1., for which we have to propose new ideas. The strategy is the following: \par
\ \par
\noindent
{\bf Bounded force field.}  Our strategy relies on the fact that for short times, the dynamics with the external force $F$ is well approximated by the dynamics with  $F=0$. 
We recall that in \cite{OG03}, the reference solution can be constructed for any time (which can be arbitrarily small) and any control zone in the torus.
Thus, we use the construction in the case $F=0$, for very short times and a small subset of the control zone $\omega$, and using the approximation of the dynamics, this will give us a relevant reference solution.

 %
 %
 %
%
%
\ \par
\noindent
{\bf Magnetic field.} The strategy in this case can be understood in the most simple case, that is when $b$ is a positive constant. In this case, the characteristics associated to the magnetic field can be explicitly computed: these are circles, whose radius is proportional to the norm of the velocity (which is a conserved quantity). We make two crucial observations: 
 
 \begin{itemize}
 \item[--]
When the velocity is very large, the curvature of the circles are close to zero, and at least locally (that is for small times), the trajectory is well approximated by the straight lines of the free transport case. 
  \item[--]
The magnetic field has ``mixing features'', in other words it makes the velocities of particles take every value of $\S^1$, which removes the above obstruction concerning high velocities. Hence, due to this effect, at high velocity, we do not need to create any additional force field to make the particles cross the control zone. 
\end{itemize}
This means that at high velocity any subset $\omega$ of the torus automatically satisfies the geometric condition \eqref{GeometricCondition} for the caracteristics associated to the magnetic field.

In the general case, the geometric condition on $b$ allows us to make sure that the particles are sufficiently influenced by the magnetic field, so that the previous considerations will still hold.

\subsection{On the uniqueness of the solution}
In this paragraph, we briefly discuss the uniqueness question included in the above results. \par
The first point is that, if we drop the uniqueness from the conclusions of the above theorems, we can replace the assumption
\begin{equation*}
| \nabla_{x} f_{i} | + | \nabla_{v} f_{i} | \leq \kappa' (1 +
|v|)^{-\gamma},
\end{equation*}
by the weaker one
\begin{equation*}
| \nabla_{x} f_{i} | + | \nabla_{v} f_{i} | \leq \kappa'.
\end{equation*}
This is easily seen when reading the proofs below. \par
Hence the assumptions is of $\nabla f_{i}$ belonging to some weighted space is only useful for the uniqueness issue. Let us explain how one can show uniqueness under this assumption. The main point is that in this case
the solution described above satisfies
\begin{equation}
\nonumber
|\nabla_{x,v} f (t,x,v) | \leq C(f_0,f_{1}) (1+|v|)^{-\gamma},
\end{equation}
for all $t$. This follows from the construction described below, and from the estimates on $\nabla f$ in the proof. Once these estimates are obtained, the proof of uniqueness is exactly the one of Ukai-Okabe. It consists in making the difference of two potential solutions; this difference satisfies a certain transport equation with source. Then one performs an $L^{1} \cap L^{\infty}$ estimate on the solution of this equation and uses a Gronwall argument. In our case, the source term disappears when we make this difference, so one can follow \cite{UO} without change. \par
This gives the uniqueness among the solutions satisfying
\begin{gather}
\nonumber
f \in C^1([0,T] \times \T^{n} \times \R^{n}), \ 
|f| + |\nabla_{x,v} f(t,x,v) | \leq C (1+|v|)^{-\gamma} \ \text{ and } \ \nabla \phi \in L^{\infty}(0,T;W^{1,\infty}(\T^{n})).
\end{gather}
\section{Bounded external field case}
\label{Sec:BEF}
In this section, we prove Theorem \ref{Theo:Bounded}.
As already explained, the main difficulty is to build the reference solution. Then one can use the same absorption process, that was proposed in \cite{OG03}, and find a solution to the non-linear system by a similar fixed-point argument.
\subsection{Design of the reference solution for the bounded field case}
We begin with the construction of the reference solution. Accordingly to the previous strategy, we distinguish between high and low velocities.

For the large velocities, we prove the following proposition:
\begin{prop} \label{PropGV}
Let $\tau>0$ and $H \in L^{\infty}((0,\tau) \times \T^{2};\R^{2})$. Given $x_0$ in $\mathbb{T}^2$ and $r_0$ a small positive number, there exist $\varphi \in \mathcal{C}^\infty([0,\tau] \times \mathbb{T}^2; \mathbb{R})$ and $\underline{m} >0$ such that
\begin{equation}
\label{LV1}
\Delta \varphi=0 \quad \text{in} \quad [0,\tau] \times \left[ \mathbb{T}^2 \backslash \overline{B}(x_0, r_0/10)\right]
\end{equation}
\begin{equation}
\label{LV2}
\operatorname{Supp} \varphi \subset (0,\tau) \times \mathbb{T}^2
\end{equation}
and such that, if one consider the characteristics $(\overline{X},\overline{V})$ associated to the force field $H+\nabla \varphi$ then for all $m \geq \underline{m}$:
\begin{multline}
\label{LV3}
\forall x \in \mathbb{T}^2, \forall v \in \mathbb{R} ^2 \text{ such that } \vert v \vert \geq m, \ \exists t \in (\tau/3,2\tau/3), \\
\text{such that } \ \overline{X}(t,0,x,v) \in B(x_0,r_0/4) \text{ and } |\overline{V}(t,0,x,v)| \geq \frac{m}{2}.
\end{multline}
\end{prop}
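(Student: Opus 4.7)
I would follow the strategy outlined in Section~\ref{Sec:strategie}: build $\varphi$ via the free-transport construction of \cite{OG03} on a very short time window of length $\tau'\ll\tau$, and turn $H$ into a weak perturbation of the OG03 dynamics by a scaling argument based on (\ref{ChgtEchellePhi}). Concretely, I would first invoke \cite{OG03} to obtain, once and for all, a reference potential $\psi_{1}\in\mathcal{C}^{\infty}([0,1]\times\T^{2};\R)$ supported in $(0,1)$, harmonic outside $\overline{B}(x_{0},r_{0}/10)$, and a threshold $m_{\mathrm{ref}}$ such that, for every $(y,w)$ with $|w|\geq m_{\mathrm{ref}}$, the pure $\nabla\psi_{1}$-characteristic issuing from $(y,w)$ enters $B(x_{0},r_{0}/16)$ at some $s^{\star}\in(1/3,2/3)$ with speed at least $3|w|/4$. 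For a small $\tau'\in(0,\tau/6)$ to be fixed later, I would then set
\[
\varphi(t,x):=(\tau')^{-2}\,\psi_{1}\!\left(\frac{t-\tau/3}{\tau'},\,x\right),
\]
which is supported in $(\tau/3,\tau/3+\tau')\subset(\tau/3,2\tau/3)$ and harmonic in $\T^{2}\setminus\overline{B}(x_{0},r_{0}/10)$, hence satisfies (\ref{LV1})--(\ref{LV2}).

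For $(x,v)$ with $|v|\geq m$, I would analyse the characteristic $(\overline{X},\overline{V})$ of $H+\nabla\varphi$ in two stages. On $[0,\tau/3]$ only $H$ acts, so $|\overline{V}(\tau/3)-v|\leq\|H\|_{\infty}\tau/3$. On the active window $[\tau/3,\tau/3+\tau']$, I would rescale to $s:=(t-\tau/3)/\tau'\in[0,1]$ and introduce
\[
\tilde{X}(s):=\overline{X}(\tau/3+\tau's),\qquad \tilde{V}(s):=\tau'\,\overline{V}(\tau/3+\tau's),
\]
for which a direct computation gives
\[
\frac{d\tilde{X}}{ds}=\tilde{V},\qquad \frac{d\tilde{V}}{ds}=\nabla\psi_{1}(s,\tilde{X})+(\tau')^{2}\,H(\tau/3+\tau's,\tilde{X}),
\]
so that the rescaled dynamics is the OG03 dynamics on time $1$ perturbed by a force of magnitude $(\tau')^{2}\|H\|_{\infty}$. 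Comparing $(\tilde{X},\tilde{V})$ with the pure $\nabla\psi_{1}$-characteristic $(\tilde{X}^{\star},\tilde{V}^{\star})$ issuing from the same data, a standard Gr\"onwall estimate yields
\[
\sup_{s\in[0,1]}\bigl(|\tilde{X}(s)-\tilde{X}^{\star}(s)|+|\tilde{V}(s)-\tilde{V}^{\star}(s)|\bigr)\leq C(\psi_{1})\,(\tau')^{2}\,\|H\|_{\infty}.
\]
Provided $|\tilde{V}(0)|=\tau'|\overline{V}(\tau/3)|\geq m_{\mathrm{ref}}$, which is ensured by $m\geq m_{\mathrm{ref}}/\tau'+\|H\|_{\infty}\tau$, the reference statement gives some $s^{\star}\in(1/3,2/3)$ with $\tilde{X}^{\star}(s^{\star})\in B(x_{0},r_{0}/16)$ and $|\tilde{V}^{\star}(s^{\star})|\geq 3|\tilde{V}(0)|/4$. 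Setting $t^{\star}:=\tau/3+\tau's^{\star}\in(\tau/3,2\tau/3)$, one deduces $\overline{X}(t^{\star})\in B(x_{0},r_{0}/16+C(\psi_{1})(\tau')^{2}\|H\|_{\infty})$ and $|\overline{V}(t^{\star})|\geq 3|\overline{V}(\tau/3)|/4-C(\psi_{1})\tau'\|H\|_{\infty}$. Choosing $\tau'$ small enough so that $C(\psi_{1})(\tau')^{2}\|H\|_{\infty}\leq r_{0}/16$ gives $\overline{X}(t^{\star})\in B(x_{0},r_{0}/4)$, and the velocity bound $|\overline{V}(t^{\star})|\geq m/2$ then follows for $m$ beyond a threshold $\underline{m}$ depending on $\tau$, $\|H\|_{\infty}$, $r_{0}$ and $\tau'$.

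The main difficulty is to extract from \cite{OG03} the precise free-transport statement with the quantitative features used above (prescribed small target ball, entry time in the middle third, final speed at least $3/4$ of the initial one, together with compact time support of the potential). Once this reference statement is available, the scaling argument converts the given bounded perturbation $H$ into a force of magnitude $(\tau')^{2}\|H\|_{\infty}$ on the fixed time interval $[0,1]$, where the Gr\"onwall bound involves only the fixed constants $\|D^{2}\psi_{1}\|_{\infty}$ and does not blow up as $\tau'\to 0$; fixing $\tau'$ small and then $\underline{m}$ large enough completes the construction.
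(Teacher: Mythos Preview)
Your proposal is correct and follows essentially the same approach as the paper: invoke the free-transport result of \cite{OG03} on a unit time interval, rescale it to a short window of length $\tau'$ inside $(\tau/3,2\tau/3)$ via the scaling $\varphi=(\tau')^{-2}\psi_{1}(\cdot/\tau',\cdot)$, and use a Gr\"onwall comparison to show that $H$ acts as an $O((\tau')^{2}\|H\|_{\infty})$ perturbation. The only cosmetic difference is that you rescale the velocity variable explicitly (so the perturbation size $(\tau')^{2}\|H\|_{\infty}$ appears directly in the rescaled ODE), whereas the paper keeps the original variables and instead observes that $\tau'^{2}\|\nabla^{2}\varphi\|_{\infty}=O(1)$ in the Gr\"onwall exponential; these are two ways of writing the same estimate.
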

\begin{proof}[Proof of Proposition \ref{PropGV}]
In the case $H=0$, this proposition was already proved in \cite[Proposition 1, p. 340]{OG03}.
 We fix $x'_0=x_0$, $r_0'=r_0/2$. Applying this result for $\tau=1$, we thus obtain the existence of $\varphi_1  \in \mathcal{C}^\infty([0,1] \times \mathbb{T}^d; \mathbb{R})$ and $m'\in \mathbb{R}^{+*}$ with compact support in time in $(0,1)$, satisfying:
\begin{equation}
\Delta \varphi=0 \quad \text{in} \, [0,1] \times \left[ \mathbb{T}^2 \backslash \overline{B}(x_0, r_0/20)\right],
\end{equation}
\begin{equation}
\operatorname{Supp} \varphi \subset (0,1) \times \mathbb{T}^2,
\end{equation}
and such that, if one consider the characteristics $(\tilde X^1, \tilde V^1)$ associated to the force field $\nabla \varphi_1$ then:
\begin{equation}
\forall x \in \mathbb{T}^d, \forall v \in \mathbb{R} ^d, \, \text{such that} \, \vert v \vert \geq m, \exists t \in (1/4, 3/4), \tilde X^1(t,0,x,v) \in B(x_0,r_0/8).
\end{equation}
Let $\tau'<\tau$ to be fixed later. For this given $\tau'$, we can construct $\varphi_{\tau'}$ by rescaling $\varphi_{1}$ as follows:%
\begin{equation} \label{PhiTauPrime}
\varphi_{\tau'}(t,x) := \frac{1}{(\tau')^{2}} \varphi_{1} \left(\frac{t}{\tau'},x\right),
\end{equation}
which corresponds to follow the characteristics with time $\frac{t}{\tau'}$. 
\par
\par
Now let us consider the shifted in time potential $\varphi$ defined by:
\begin{equation}
\varphi(t,x)= \varphi_{\tau'}\left(t-\frac{\tau-\tau'}{2},x\right).
\end{equation}
We extend $\varphi$ by $0$ in $(0,\tau)\setminus \left(\frac{\tau-\tau'}{2},\frac{\tau+\tau'}{2}\right)$.

We define the characteristics $(\tilde X, \tilde V)$ associated to the force field $\nabla \varphi$, which satisfy by construction:
\begin{equation}
\forall x \in \mathbb{T}^d, \forall v \in \mathbb{R} ^d, \, \text{such that} \, \vert v \vert \geq m, \exists t \in (\frac{\tau-\tau'}{2}, \frac{\tau+\tau'}{2}), \tilde X(t,0,x,v) \in B(x_0,r_0/8).
\end{equation}

Let us now compare $(\tilde X, \tilde V)$  and $(\overline X, \overline V)$, which is associated to the force field $H+\nabla \varphi$  on $(0,\tau)$. By Taylor's formula we have:
\begin{equation}
\begin{split}
\vert \overline{X}(t,\frac{\tau-\tau'}{2},x,v) -\tilde X(t,\frac{\tau-\tau'}{2},x,v) \vert \leq \int_\frac{\tau-\tau'}{2}^t (t-s)\Big[ &\vert \nabla \varphi(s, \tilde X(s,\frac{\tau-\tau'}{2},x,v))- \nabla \varphi(s,\overline X(s,\frac{\tau-\tau'}{2},x,v))\vert \\ +& \vert H(s, \tilde X(s,\frac{\tau-\tau'}{2},x,v), \tilde V(s,\frac{\tau-\tau'}{2},x,v)) \vert \Big]ds.
\end{split}
\end{equation}
By Gronwall lemma we deduce for $t \in \left(\frac{\tau-\tau'}{2},\frac{\tau+\tau'}{2}\right)$:
\begin{equation}
\begin{split}
\vert \overline{V}(t,\frac{\tau-\tau'}{2},x,v) -\tilde V(t,\frac{\tau-\tau'}{2},x,v) \vert \leq \tau' \| H \|_{L^\infty_{t,x,v}} e^{\frac{{\tau'}^2}{2} \| \nabla^2 \varphi \|_{L^\infty((0,\tau) \times \mathbb{T}^d)}}, \\
\vert \overline{X}(t,\frac{\tau-\tau'}{2},x,v) -\tilde X(t,\frac{\tau-\tau'}{2},x,v) \vert \leq \frac{{\tau'}^2}{2} \| H \|_{L^\infty_{t,x,v}} e^{\frac{{\tau'}^2}{2} \| \nabla^2 \varphi \|_{L^\infty((0,\tau) \times \mathbb{T}^d)}}.
\end{split}
\end{equation}
The crucial point is now to observe that $\varphi$ described above satisfies:
\[
\| \nabla^2 \varphi \|_{L^\infty((0,\tau) \times \mathbb{T}^d)} =\mathcal{O}\left(\frac{1}{{\tau'}^2}\right) \ \text{ as } \tau' \rightarrow 0,
\]
as it can be seen from \eqref{PhiTauPrime}.

Thus for $\tau'$ small enough we infer that $\overline{X}(t,0,x,v)$ meets $B(x_0, r_0/4)$ for some $t \in \left(\frac{\tau-\tau'}{2},\frac{\tau+\tau'}{2}\right) \subset(\tau/3,2\tau/3)$, for all $x$ and $v$, provided that $|\overline{V}(\frac{\tau-\tau'}{2},0,x,v) |$ is large enough. This is ensured if $|v | \geq \underline{m}$ is chosen large enough, thanks to the inequality:
\[
|\overline{V}(\frac{\tau-\tau'}{2},0,x,v) |\geq |v | - \frac{\tau-\tau'}{2} \Vert H \Vert_{L^\infty_{t,x,v}}.
\]
\end{proof}
\begin{rque}
In this proof, this is crucial that $H \in  L^{\infty}((0,\tau) \times \T^{2};\R^{2})$. Thus this approach will fail for the magnetic field case.
\end{rque}
The above proposition shows that with a suitable electric potential, all particles having a sufficiently high velocity will eventually reach $\omega$. The following proposition explains how one can accelerate all particles in order to make all the remaining ones also reach $\omega$.
This will also rely on the construction in the case $F=0$. 
\begin{prop}
\label{PropAccelerePartout}
Let $\tau>0$, $M>0$ and $H \in L^{\infty}((0,\tau) \times \T^{2};\R^{2})$. Given $x_0$ in $\mathbb{T}^2$ and $r_0$ a small positive number, there exists $\tilde{M}>0$, ${\mathcal E} \in C^{\infty}([0,\tau] \times \T^2;\R^{2})$ and  $\varphi \in C^{\infty}([0,\tau] \times \T^2;\R)$ satisfying
\begin{gather}
\label{EetPhi}
{\mathcal E} = \nabla \varphi \text{ in  } [0,\tau] \times (\T^2 \backslash B(x_{0},r_{0})), \\
\label{Phi2SupportTemps}
\mbox{Supp}({\mathcal E}) \subset (0,\tau) \times \T^2, \\
\label{Phi2Harmonique}
\Delta \varphi =0 \text{ in  } [0,\tau] \times (\T^2 \backslash B(x_{0},r_{0})), 
\end{gather}
such that if $({X},{V})$ are the characteristics corresponding the force
\begin{equation} \label{Faccelere}
\mathcal{I}:= {\mathcal E} + H,
\end{equation}
then
\begin{equation} 
\label{AccelerePartout}
\forall (x,v) \in \T^{2} \times B(0,M), \ {V}(\tau,0,x,v)) \in B(0,\tilde{M}) \setminus B(0,M+1).
\end{equation}
\end{prop}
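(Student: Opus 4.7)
My plan is to follow the strategy of Proposition~\ref{PropGV}: reduce to the analogous statement in the case $H = 0$ (proved in \cite{OG03}), rescale the resulting potential to a short sub-interval of $(0,\tau)$ and control the perturbation by $H$ via a Gronwall-type estimate. The decisive feature of the rescaling $\varphi^{\tau'}(t,x) := (\tau')^{-2} \varphi_1(t/\tau',x)$ is that $(\tau')^2 \|\nabla^2 \varphi^{\tau'}\|_{L^\infty} = \|\nabla^2 \varphi_1\|_{L^\infty}$, so that the Gronwall exponent stays bounded, while the error produced by $H$ on a window of length $\tau'$ is only $O(\tau'\|H\|_{L^\infty})$ and can be made arbitrarily small by choosing $\tau'$ small.

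I would first invoke the $H=0$ analogue of the proposition on the time interval $[0,1]$ to obtain a smooth potential $\varphi_1$, time-supported in $(0,1)$, harmonic outside $\overline B(x_0, r_0/2)$, which accelerates $\T^2 \times B(0, \tau' M_*)$ past the speed $\tau'(M_*+1)$ in time $1$ while keeping the velocity bounded by some $\tilde M_0$, where $M_* := M + \tau \|H\|_{L^\infty}/2 + 2$ (so as to absorb the drift produced by $H$ on the outer intervals, together with a safety margin). I would then set
\begin{equation*}
\varphi(t,x) := \frac{1}{(\tau')^2}\, \varphi_1\!\left(\frac{t - (\tau-\tau')/2}{\tau'},\, x\right),
\end{equation*}
extended by $0$ outside $((\tau-\tau')/2, (\tau+\tau')/2)$, and define $\mathcal E := \nabla \varphi$ on $\T^2 \setminus B(x_0, r_0)$, completed smoothly with time support in $(0,\tau)$ inside $B(x_0, r_0)$ so that \eqref{EetPhi}--\eqref{Phi2Harmonique} hold. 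The velocity-rescaling identity used in the proof of Proposition~\ref{PropGV} then yields that the characteristics $(\tilde X, \tilde V)$ of $\nabla \varphi$ alone send $B(0, M_*)$ into $B(0, \tilde M_0/\tau') \setminus B(0, M_* + 1)$ between times $(\tau-\tau')/2$ and $(\tau+\tau')/2$.

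To include $H$, let $(\overline X, \overline V)$ be the characteristics driven by $\mathcal E + H$. On the two outer intervals $\nabla \varphi \equiv 0$, and so $\overline V$ drifts by at most $\tau \|H\|_{L^\infty}/2$ on each, which is absorbed into the definition of $M_*$. On the central window, the Gronwall estimate used in the proof of Proposition~\ref{PropGV} gives
\begin{equation*}
|\overline V(t) - \tilde V(t)| \leq \tau' \|H\|_{L^\infty}\, e^{\|\nabla^2 \varphi_1\|_{L^\infty}/2} = O(\tau').
\end{equation*}
Choosing $\tau'$ small enough, one concludes that for every $(x,v) \in \T^2 \times B(0,M)$ the final velocity $\overline V(\tau, 0, x, v)$ lies in $B(0, \tilde M) \setminus B(0, M+1)$ with $\tilde M := \tilde M_0/\tau' + \tau \|H\|_{L^\infty}$. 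The main point I expect to require care is the bookkeeping of the constants under rescaling: one has to check that the \cite{OG03} acceleration result can be applied with initial velocity bound proportional to $\tau'$ in such a way that $\|\nabla^2 \varphi_1\|_{L^\infty}$ remains bounded (or at least does not grow fast enough to spoil the Gronwall factor) as $\tau' \to 0$; should this prove delicate, a joint rescaling of time and velocity via \eqref{ChgtEchelle}--\eqref{ChgtEchelleF} provides the additional degree of freedom needed to match the relevant constants.
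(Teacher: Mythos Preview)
Your approach is correct in spirit and would work for this proposition, but it differs from the paper's in a way worth noting. The paper does \emph{not} invoke an $H=0$ acceleration result as a black box and then rescale it in time. Instead, it builds $\mathcal E$ directly from a single time-independent object: by \cite[Lemma~3]{OG03} there is $\theta\in C^\infty(\T^2)$ harmonic outside $B(x_0,r_0)$ with $|\nabla\theta|>0$ there; since the index of $\nabla\theta$ on $S(x_0,r_0)$ vanishes, $\nabla\theta$ extends to a smooth \emph{nowhere-vanishing} field $W$ on all of $\T^2$, and one sets $\mathcal E(t,x)=\frac{\mathcal C}{\tau'}\Lambda(t/\tau')W(x)$. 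The acceleration is then read off directly: for short $\tau'$ one has $\overline V(\tau',0,x,v)\approx v+\mathcal C\,W(x)$, and since $|W|\geq c>0$ everywhere, choosing $\mathcal C$ large gives the lower bound on $|\overline V|$ uniformly. This two-parameter freedom (first $\mathcal C$, then $\tau'$) eliminates exactly the circularity you flag at the end: nothing in the construction of $\mathcal E$ depends on $\tau'$ except the overall time-scaling factor.

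The second difference is quantitative. Your time-rescaling $\varphi^{\tau'}=(\tau')^{-2}\varphi_1(\cdot/\tau')$ gives $(\tau')^2\|\nabla^2\varphi^{\tau'}\|_\infty=\|\nabla^2\varphi_1\|_\infty=O(1)$, which is enough here. The paper's $\mathcal E$ satisfies the stronger $(\tau')^2\|\nabla\mathcal E\|_\infty=O(\tau')$ (since only the time profile is rescaled, not the spatial part). The paper explicitly remarks on this ``margin'' right after the proof, and points out that the sharper $O(\tau')$ scaling becomes essential in the magnetic-field analogue (Proposition~\ref{PropAccelerePartout-mag}), where the perturbing term $b(x)v^\perp$ grows with $|v|$; there an $O(1)$ Gronwall exponent would not suffice. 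So while your route closes the present proposition, the paper's more explicit construction is what carries over to Section~\ref{Sec:CM}.
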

\begin{proof}[Proof of Proposition \ref{PropAccelerePartout}]
By \cite[Lemma 3, p. 356]{OG03}, there exists $\theta \in C^{\infty}(\T^{2};\R)$ such that
\begin{gather*}
\Delta \theta =0 \ \text{ in } \ \T^{2} \setminus B(x_{0},r_{0}), \\
| \nabla \theta (x) | >0 \ \text{ in } \ \T^{2} \setminus B(x_{0},r_{0}).
\end{gather*}
From the second condition, one sees that $\mbox{Ind}_{S(x_{0},r_{0})} (\nabla \theta)=0$, so that $\nabla \theta_{|\T^{2} \setminus B(x_{0},r_{0})}$ can be extended to $\T^{2}$ as a smooth non-vanishing vector field, let us say $W$. Call $\Lambda \in C^{\infty}_{0}((0,1);\R)$ a nonnegative function with $\int_{0}^{1} \Lambda =1$. We claim that for sufficiently small $\tau'<\tau$, and sufficiently large $\mathcal{C} >0$,
\begin{equation*}
{\mathcal E}(t,x):= \frac{{\mathcal C}}{\tau'} \Lambda \left( \frac{t}{\tau'} \right) W(x),
\end{equation*}
is convenient. Then all properties above but \eqref{AccelerePartout} are clear. \par
Call $(\overline{X}, \overline{V})$ the characteristics associated to ${\mathcal E}$ only. We see that for all $(x,v) \in \T^{2} \times B(0,M)$ and $t \in [0,\tau']$, 
\begin{equation*}
| \overline{V}(t,0,x,v) -v | \leq {\mathcal C} \| {\mathcal E} \|_{\infty}, \
| \overline{X}(t,0,x,v) -x | \leq \tau' ( {\mathcal C} \|  {\mathcal E} \|_{\infty} +M ), \
\end{equation*}
so
\begin{equation*}
| \overline{V}(\tau',0,x,v) -v + {\mathcal C} {\mathcal E}(x) | \leq \tau' \| {\mathcal E} \|_{\sigma} [\tau' ({\mathcal C}  \| {\mathcal E} \|_{\infty} +M )].
\end{equation*}
Noting that, due to the time support of $\mathcal{E}$,  $\overline{V}(\tau,0,x,v)=\overline{V}(\tau',0,x,v)$ and using that $|{\mathcal E}| \geq c >0$ on $\T^{2}$, one sees that one can choose ${\mathcal C}$ and then $\tau'$ such that
\begin{equation*}
\forall (x,v) \in \T^{2} \times B(0,M), \  \overline{V}(\tau,0,x,v) \in \R^{2} \setminus B(0,M+2 + \tau \| H \|_{\infty}).
\end{equation*}
We now consider the characteristics $(X, V)$ associated to ${\mathcal E} + H$ and evaluate:
\begin{equation*}
\vert \overline{X}(t,0,x,v) - X(t,0,x,v) \vert \leq \int_0^t \vert \overline{V}(s,0,x,v) - V(s,0,x,v) \vert ds
\end{equation*}
\begin{equation}
\begin{split}
| \overline{V}(t,0,x,v) - V(t,0,x,v) | \leq 
& \int_0^t \Big( | {\mathcal E} (s, \overline{X}(s,0,x,v)) -{\mathcal E} (s,  X(s,0,x,v)) | \\ 
+& | H(t,  X(s,0,x,v),  V(s,0,x,v) ) | \Big) ds \\
\leq& \| \nabla {\mathcal E} \|_{L^\infty((0,\tau') \times \mathbb{T}^2)} \int_0^t (t-s) | \overline{V}(s,0,x,v) - V(s,0,x,v) | ds 
+ t \| H \|_{L^\infty_{t,x,v}}. 
\end{split}
\end{equation}
By Gronwall's inequality:
\begin{equation}
\label{3.16}
\vert \overline{V}(t,0,x,v) - V(t,0,x,v) \vert \leq t \| H \|_{L^\infty_{t,x,v}}  e^{\frac{t^2}{2} \| \nabla {\mathcal E} \|}
\end{equation}
We observe that we have: 
\begin{equation}
\label{scalingphitemps}
\frac{\tau'^2}{2} \| \nabla {\mathcal E} \|_{L^\infty((0,\tau') \times \mathbb{T}^2)}  =\mathcal{O}(\tau') \ \text{ as } \tau' \rightarrow 0.
\end{equation}
Taking $\tau'$ small enough, using $t= \tau'$ in \eqref{3.16},  and observing that
\begin{equation*}
|V(\tau,0,x,v) - V(\tau',0,x,v)| \leq |\tau - \tau'| \| H \|_{\infty},
\end{equation*}
allow us to prove our claim. The existence of $\tilde{M}$ is a matter of compactness of $\T^{2} \times \overline{B}(0,M+2 + \tau \| H \|_{\infty})$.
\end{proof}
\begin{rque}
We can observe that there is some ``margin'' in the previous proof, in the sense that if we only had
\[
\frac{\tau'^2}{2} \| \nabla {\mathcal E} \|_{L^\infty((0,\tau') \times \mathbb{T}^2)}  =\mathcal{O}(1) \ \text{ as } \tau' \rightarrow 0,
\]
the proof would still follow. However, that \eqref{scalingphitemps} holds will actually be crucial in the proof of the equivalent lemma in the magnetic field case, and this time this will be sharp.
\end{rque}
\ \par
\noindent
{\bf The reference solution.} Now we are able to define the reference solution. Consider $x_0$ in $\omega$ and $r_0$ a small positive number such that
\[
B(x_0,2r_0) \subset \omega.
\]
We first define a reference potential $\overline{\phi}:[0,T] \times \T^{2} \rightarrow \R$ as follows. We apply Proposition \ref{PropGV} with $\tau =T/3$, $H=F_{|[0,T/3]}$, we obtain $\overline{\phi}_{1}$ and some $\underline{m}_1>0$ such that \eqref{LV3} is satisfied.
\par
Let
\begin{equation}
\alpha=\max\left(\frac{600 r_{0}}{T}, C_{r_0} (1+\| F \|_{\infty} + \| \overline{\varphi}_{1} \|_{\infty}  + \| \overline{\varphi}_{3} \|_{\infty} )\right),
\end{equation}
\begin{equation}\label{DefM}
M_1=\max (\underline{m}_1, 2\alpha )+\frac{T}{3}\left(\| \nabla \overline{\phi}_{1} \|_{\infty} + \| F \|_{\infty} \right), \quad M_2=\max (\underline{m}_3, 2\alpha), \quad M= \max(M_1,M_2).
\end{equation}
Above $C_{r_0}$ is a positive geometric constant depending only on $r_0$, and which will be described later.

We also use Proposition \ref{PropGV} again with $\tau =T/3$, $H(t,x)=F(t+ \frac{2T}{3},x)$ for $t \in [0,T/3]$, we obtain $\overline{\phi}_{3}$ and some $\underline{m}_{3}>0$ such that \eqref{LV3} is satisfied.  Then we apply Proposition \ref{PropAccelerePartout} with $\tau=T/3$,  $H(t,x)=F(t+ \frac{T}{3},x)$  for $t \in [0,T/3]$, and $M$ described above.
We obtain $\overline{\mathcal E}_{2}$ $\overline{\phi}_{2}$ and some $\tilde{M}$. 

Finally we set:
\begin{equation*}
\overline{\phi}(t,\cdot) = \left\{ \begin{array}{l}
 \overline{\phi}_{1}(t,\cdot) \text{ for } t \in [0,\frac{T}{3}], \\
 \overline{\phi}_{2}(t - \frac{T}{3},\cdot) \text{ for } t \in [\frac{T}{3},\frac{2T}{3}], \\
 \overline{\phi}_{3}(t - \frac{2T}{3},\cdot) \text{ for } t \in [\frac{2T}{3},T],
\end{array} \right.
\end{equation*}
and
\begin{equation*}
\overline{{\mathcal E}}(t,\cdot) = \left\{ \begin{array}{l}
\nabla \overline{\phi}_{1}(t,\cdot) \text{ for } t \in [0,\frac{T}{3}], \\
 \overline{{\mathcal E}}_{2}(t - \frac{T}{3},\cdot) \text{ for } t \in [\frac{T}{3},\frac{2T}{3}], \\
\nabla \overline{\phi}_{3}(t - \frac{2T}{3},\cdot) \text{ for } t \in [\frac{2T}{3},T].
\end{array} \right.
\end{equation*}
Let us now introduce $\overline{f}$.
Consider a function $\mathcal Z \in C^{\infty}_0(\R^{n} ; \R)$
satisfying the following constraints
\begin{equation}
\label{DefZ}
\left\{ \begin{array}{l}
{{\mathcal Z} \geq 0 \text{ in } \R^{n},} \\
{\mbox{Supp } {\mathcal Z} \subset B_{\R^{n}}(0,1), }\\
{\displaystyle{ \int_{\R^{n}} {\mathcal Z} =1.   }}
\end{array} \right.
\end{equation}
We introduce $\overline{f}=\overline{f}(t,x,v)$ as
\begin{equation}
\label{Deffbar}
\overline{f}(t,x,v):= {\mathcal Z}(v) \Delta \overline{\varphi}(t,x).
\end{equation}
Of course, $\overline{f}$ satisfies (\ref{Vlasov}) in $[0,T]
\times\T^{2} \times \R^{2}$, with source term
\begin{equation}
\label{DefGbar}
\overline{G}(t,x,v):= \partial_{t} \overline{f} + v.\nabla_{x}
\overline{f} + (F + \nabla \overline{\varphi}).\nabla_{v} \overline{f},
\end{equation}
which is supported in $[0,T] \times B(x_{0},r_{0}) \times
\R^{2}$. Up to an additive function of $t$, the function $\varphi$
satisfies the equation (\ref{Poisson}) corresponding to $\overline{f}$
(with $\overline{f}(0,\cdot,\cdot) \equiv 0$).
We denote 
\begin{equation*}
\overline{\rho}(t,x):=\int_{\R^{2}} \overline{f}(t,x,v) \, dv =\Delta \overline{\varphi}(t,x). 
\end{equation*}
\subsection{Fixed point operator}
\label{sectionFP}
To prove Theorem \ref{Theo:Bounded}, we construct directly the solution $f$ starting at $f_{0}$ and reaching $0$ in $\T^{2} \setminus \omega$ at time $T$, provided that $f_{0}$ is suitably small. This is done by a fixed-point procedure. In this subsection, we describe the operator; in the next ones, we will find a solution to our controllability problem as a fixed point of this operator. \par
\ \par
Let $\varepsilon \in (0,1)$.
We first define the domain ${\mathcal S}_{\varepsilon}$ of
$V_{\varepsilon}$ by
\begin{gather}
\nonumber
\begin{array}{ll}
{{\mathcal S}_{\varepsilon}:= 
\left\{ \II \ g \in C_{b}^{\delta_2}(Q_{T}) \ \right/ \hfill} & {\hfill }
\end{array} \\
\label{DefS}
\begin{array}{ll}
{\mathbf a.\ } & {\| \int_{\R^{2}} (g - \overline{f}) \, dv
\|_{C^{\delta_1}(\Omega_{T})} \leq \varepsilon,} \\
{\mathbf b.\ } &
{\left. \| (1+ |v|)^{\gamma}( g - \overline{f})\|_{L^{\infty}(Q_{T})} 
\leq c_{1} 
\left[ \II \|f_{0}\|_{C_{b}^{1}(\T^{2} \times \R^{2})}
+\|(1+|v|)^{\gamma}f_{0}\|_{C_{b}^{0}(\T^{2} \times \R^{2})} \right], 
\right. } \\
{\mathbf c.\ } &
{\left. \| g - \overline{f} \|_{C_{b}^{\delta_2}(Q_{T})} \leq c_2
\left[ \II \|f_{0}\|_{C_{b}^{1}(\T^{2} \times \R^{2})}
+\|(1+|v|)^{\gamma}f_{0}\|_{C_{b}^{0}(\T^{2} \times \R^{2})} \right] ,
\right.} \\
{\mathbf d.\ } &
{\left. \II \forall t \in [0,T],\ \int_{\T^{2} \times \R^{2}}  g(t,x,v)\,  dx \, dv = \int_{\T^{2} \times \R^{2}} f_{0}(x,v) 
dx dv\ \right\} ,}
\end{array}
\end{gather}
with $c_1$, $c_2$ depending only on $\gamma$, $T$, $\omega$ (and
hence on $(\overline{f},\overline{\varphi})$) and $F$, but not on $\varepsilon$. The
indices $\delta_1 < \delta_2$ in $(0,1)$ are fixed as follows
\begin{equation}
\label{CalculDesDeltas}
\delta_1:= \frac{\gamma-n}{2(\gamma+1)} \text{ and }  \delta_2:= \frac{\gamma}{\gamma+1}.
\end{equation}
For fixed $c_1$ and $c_2$ large enough depending only on
$(\overline{f},\varphi)$, and $f_0$ small enough, one has 
\begin{equation*}
\left|\int f_0 dvdx \right| \leq \varepsilon,
\end{equation*}
and consequently, in this case $f_0+\overline{f} \in {\mathcal S}_\varepsilon$, so ${\mathcal
S}_\varepsilon \not = \emptyset$. From now, this is systematically supposed to be the
case. \par
Now we introduce the following subsets of $S(x_0,r_0)
\times \R^{2}$:
\begin{equation}
\label{DefGammaMoins} 
\gamma^{-}:= \left\{ \II (x,v) \in S(x_0,r_0) \times
\R^{2} \ /\ |v| > \frac{1}{2} \text{ and } v.\nu(x) <-\frac{1}{10}
|v|  \right\},
\end{equation}
\begin{equation}
\label{DefGamma32Moins} 
\gamma^{2-}:= \left\{ \II (x,v) \in S(x_0,r_0) \times
\R^{2} \ /\ |v| \geq 1 \text{ and } v.\nu(x) \leq -\frac{1}{8}
|v| \right\}.
\end{equation}
\begin{equation}
\label{DefGammaMoinsMoins} 
\gamma^{3-}:= \left\{ \II (x,v) \in S(x_0,r_0) \times
\R^{2} \ /\ |v| \geq 2 \text{ and } v.\nu(x) \leq -\frac{1}{5}
|v| \right\},
\end{equation}
\begin{equation}
\label{DefGammaPlus} 
\gamma^{+}:= \left\{ \II (x,v) \in S(x_0,r_0) \times
\R^{2} \ /\ v.\nu(x) \geq 0 \right\},
\end{equation}
where $\nu(x)$ stands for the unit outward normal to the sphere $S(x_0,r_0)$ at point $x$.
It can be easily seen that
\begin{equation}
\nonumber
\mbox{dist}( [S(x_0,r_0) \times \R^2] \backslash \gamma^{2-}; \gamma^{3-}) >0.
\end{equation}
\ \\
We introduce a $C^{\infty} \cap C_{b}^{1}$ regular function $U:S(x_0,r_0)
\times \R^2 \to \R$, satisfying
\begin{equation}
\label{DefU}
\left\{ \begin{array}{l}
{ 0 \leq U \leq 1, } \\
{U \equiv 1 \text{ in } [S(x_0,r_0) \times \R^2] \backslash
\gamma^{2-},} \\
{U \equiv 0 \text{ in } \gamma^{3-}.}
\end{array} \right.
\end{equation}
We also introduce a function $\Upsilon:\R^{+} \to \R^{+}$, of
class $C^{\infty}$, such that
\begin{equation}
\label{DefUpsilon}
\Upsilon = 0 \text{ in } \left[0,\frac{T}{48}\right] \cup  \left[\frac{47T}{48},T\right]  \ \text{ and } \
\Upsilon = 1 \text{ in } \left[\frac{T}{24},\frac{23T}{24}\right].
\end{equation} \par
Now, given $g \in {\mathcal S}_{\varepsilon}$, we associate $\phi^g$ on $[0,T] \times \T^2$
by
\begin{equation}
\label{defphiPoisson}
\left\{ \begin{array}{l} { \Delta \phi^{g} (t,x) =  \int_{\R^{n}} g(t,x,v) \, dv - \int_{\T^n \times \R^{n}} g(t,x,v) \, dv \, dx \text{ in } [0,T] \times \T^{n}, } \\
\int_{\T^{n}} \phi^{g}(t,x) \, dx = 0 \text{ in } [0,T]. 
\end{array} \right.
\end{equation} \par
Then, we define $\tilde{V}(g):=f$ to be the solution of the following system
\begin{equation}
\label{EqLin3}
\left\{ \begin{array}{l}
f(0,x,v) = f_{0} \text{ on } \T^{2} \times \R^{2}, \medskip \\
\partial_{t} f + v.\nabla_{x} f + (F+\nabla \phi^{g} + \overline{\mathcal E} - \nabla \overline{\varphi}).\nabla_{v} f = 0
\text{ in } [0,T] \times [(\T^{n} \times \R^{n}) \backslash \gamma^{-}], \medskip \\
f(t,x,v) =[ 1- \Upsilon(t)] f(t^{-},x,v) + \Upsilon (t) U(x,v) f(t^{-},x,v) \text{ on } [0,T] \times \gamma^{-}.
\end{array} \right. 
\end{equation} 
To explain the last equation, we introduce the characteristics $(X,V)$ associated to the force field $F+\nabla \phi^{g} + \overline{\mathcal E} - \nabla \overline{\varphi}$. 
In the previous writing, $f(t^{-},x,v)$ is the limit value of $f$ on
the characteristic $(X,V)(s,t,x,v)$ as the time $s$ goes to
$t^-$. (For times before $t$, but close to $t$, the corresponding
characteristic is not in $\gamma^{-}$.) 
When the characteristics $(X,V)$ meet $\gamma^-$ at time $t$, then
the value of $f$ at time $t ^+$ is fixed according to the last
equation in (\ref{EqLin3}). One can see the function $\Upsilon (t) U(x,v)$ as an opacity 
factor which varies according to time and to the incidence of the characteristic on $S(x_{0},r_{0})$. 
In this process a part of $f$ is absorbed on $\gamma^{-}$,
which varies from the totality of $f$ to no absorption according to the angle of incidence, the modulus of the velocity and the time. \par
The set of times when a characteristic meets $\gamma^- $ is discrete. Indeed, if $(X,V)(t,0,x,v) \in \gamma^{-}$ and $(X,V)(t',0,x,v) \in \gamma^{-}$, then there exists $s \in (t,t')$ for which $(X,V)(s,0,x,v) \in \gamma^{+}$. The conclusion follows from $\mbox{dist}(\gamma^{+},\gamma^{-})>0$. \par
\ \par
We now consider a continuous linear extension operator
$\overline{\pi}:C^{0}(\T^{2}\backslash B(x_{0},2r_{0});\R) \to
C^{0}(\T^{2};\R)$, and which has the property that each
$C^{\alpha}$-regular function is continuously mapped to a
$C^{\alpha}$-regular function, for any $\alpha \in [0,1]$. \par
From this operator, we deduce a new one $\tilde{\pi}:C^{0}((\T^{n} \backslash
B(x_{0},2r_{0})) \times \R^{n}) \to C^{0}(\T^{n} \times \R^{n})$
according to the rule:
\begin{equation}
\label{ReglePi}
(\pi f)(x,v):= [\overline{\pi}f(\cdot,v)](x).
\end{equation}
Then we modify this operator in order to get the further property that for any integrable $f \in
C^{0}((\T^{n}\backslash B(x_{0},2r_{0})) \times \R^{n})$, one has
\begin{equation}
\label{NeutralitedePi}
\int_{\T^{n} \times \R^{n}} {\pi}(f) \,  dv \, dx = \int_{\T^{n} \times \R^{n}}
f_{0}(x,v)\,  dv \, dx.
\end{equation}
This condition can easily be obtained by considering a regular, compactly supported,
nonnegative function $u$ with integral $1$ in $B(x_{0},r_{0}) \times \R^{n}$, and adding to ${\pi}(f)$ the function 
\begin{equation*}
\left[\int_{\T^{n} \times \R^{n}} f_{0} -\int_{(\T^{n} \setminus \omega) \times \R^{n}} f\right] u.
\end{equation*}
We obtain a continuous affine operator $\pi$ satisfying that for some constant $c_{\pi}$, one has for any integrable $f \in C^{1}(\T^{2}\backslash B(x_{0},2r_{0}))$, one has
\begin{gather*}
\label{EstPi}
\| {\pi}(f) \|_{C_{b}^{1}} \leq c_{\pi} \| f \|_{C_{b}^{1}}
+ \left|\int_{(\T^n \setminus \omega) \times \R^n} f - \int_{\T^n \times \R^n} f_0 \, dv \, dx \right|,  \\
\| {\pi}(f) \|_{L^{\infty}} \leq c_{\pi} \| f \|_{L^{\infty}}.
+ \left|\int_{(\T^n \setminus \omega) \times \R^n} f - \int_{\T^n \times \R^n} f_0 \, dv \, dx \right|.
\end{gather*}
Due to the compact support of $u$, $\pi$ continuously sends $L^{\infty}((\T^{n} \setminus \omega) \times \R^{n}; (1+|v|)^{\gamma}\,dx)$ into  $L^{\infty}(\T^{n} \times \R^{n}; (1+|v|)^{\gamma}\,dx)$, with estimates as above. \par
It is convenient to introduce another truncation in time function $\tilde{\Upsilon}$ such that:
\begin{equation}
\label{DefUpsilon2}
\tilde{\Upsilon} = 0 \text{ in } \left[0,\frac{T}{100}\right]  \ \text{ and } \
\tilde{\Upsilon} = 1 \text{ in } \left[\frac{T}{48},T\right].
\end{equation} \par
Finally, we introduce the operator $\Pi :C^{0}(([0,T] \times \left[
\T^{2} \backslash B(x_{0},2r_{0}) \right] \times \R^{2}) \cup
([0,T/48] \times \T^{2} \times \R^2)) \to C^{0}([0,T] \times \T^{2}
\times \R^{2})$ given by:
\begin{equation}
\label{RegleGrandPi}
(\Pi f)(t,x,v):= (1-\tilde{\Upsilon}(t)) f(t,x,v)
+ \tilde{\Upsilon}(t) [{\pi}f(t,\cdot,\cdot)](x,v).
\end{equation}
We finally define ${\mathcal V}[g]$ by:
\begin{equation}
\label{DefV2}
{\mathcal V}[g]:= \overline{f} + \Pi ( f_{|([0,T] \times \left[
\T^{2} \backslash B(x_{0},2r_{0}) \right] \times \R^{2}) \cup
([0,T/48] \times \T^{2} \times \R^2)} ) \text{ in } [0,T] \times \T^{2} \times \R^{2}.
\end{equation} \par
\subsection{Existence of a fixed point}
The goal of this paragraph is to prove the existence of a fixed point for small values of $\epsilon$, which corresponds to the following lemma.
\begin{lem} \label{Lem:ExistFP}
There exists $\epsilon_0>0$ such that for any $0<\epsilon<\epsilon_0$, there exists a fixed point of ${\mathcal V}$ in $\mathcal{S}_\epsilon$.
\end{lem}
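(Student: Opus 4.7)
}
The plan is to apply the Schauder fixed point theorem, the main task being to verify that, for $\epsilon$ small enough, $\mathcal{V}$ maps $\mathcal{S}_\epsilon$ continuously into itself, while $\mathcal{S}_\epsilon$ is a convex set which is compact in a slightly weaker topology (e.g. $C_{b}^{\delta_{2}'}(Q_{T})$ with $\delta_{2}'<\delta_{2}$ combined with a slightly weaker weight). The closedness, convexity and near-compactness of $\mathcal{S}_\epsilon$ follow directly from its definition (conditions \textbf{b}, \textbf{c} are bounds in weighted H\"older/$L^\infty$ norms, \textbf{d} is a linear constraint, and \textbf{a} is a closed condition). Continuity of $\mathcal{V}$ is routine once the key stability is established: $g\mapsto \phi^g$ is continuous from the topology of $\mathcal{S}_\epsilon$ into $L^\infty_tW^{2,\infty}_x$ by standard elliptic regularity applied to \eqref{defphiPoisson}, hence so is the characteristics map and the transported function, while $\Pi$ is linear continuous by construction.

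The bulk of the argument is to check the four conditions defining $\mathcal{S}_\epsilon$ are preserved by $\mathcal{V}$. First I would set up characteristic estimates: for $g\in \mathcal{S}_\epsilon$, condition \textbf{a} gives $\|\Delta(\phi^g-\overline{\varphi})\|_{C^{\delta_1}(\Omega_T)}\leq\epsilon$, hence by Schauder estimates on \eqref{defphiPoisson} one gets $\|\nabla\phi^g-\nabla\overline{\varphi}\|_{L^\infty_tC^{1,\delta_1}_x}\leq C\epsilon$. Consequently the characteristics $(X,V)$ of $F+\nabla\phi^g+\overline{\mathcal{E}}-\nabla\overline{\varphi}$ differ from the reference ones (associated to $F+\overline{\mathcal{E}}$) by $O(\epsilon)$ on $[0,T]$ via a Gronwall argument. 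By the construction of the reference solution through Propositions \ref{PropGV} and \ref{PropAccelerePartout}, every reference characteristic meets $B(x_0,r_0/4)$ during $[T/24,23T/24]$ with $|V|$ large and with a sufficiently negative radial component at entry, i.e.\ lies in $\gamma^{3-}$ at its first crossing of $S(x_0,r_0)$; therefore, for $\epsilon$ small enough, the perturbed characteristics $(X,V)$ still enter $\gamma^{3-}\subset\gamma^-$ during $[T/24,23T/24]$, where $U\equiv 0$ and $\Upsilon\equiv 1$, so the solution of \eqref{EqLin3} is killed along these trajectories once they cross $S(x_0,r_0)$.

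From this absorption, one gets condition \textbf{a} for $\mathcal{V}[g]$: outside $B(x_0,2r_0)$, after time $T/48$, $f$ is supported on characteristics that have not yet reached the absorption sphere; decomposing the velocity space into low and high velocities and using the $(1+|v|)^{-\gamma-1}$ bound of $f_0$ (together with a standard change-of-variables/Jacobian control which yields a gain of a factor $(1+|v|)^{-n}$ when one integrates in $v$ those particles still at a given point $x$), one concludes that $\|\int f\,dv\|_{C^{\delta_1}(\Omega_T\setminus ([0,T/48]\cup B(x_0,2r_0)))}\leq C\|f_0\|_*$, which is $\leq\epsilon$ for $\|f_0\|_*$ small; the extension $\Pi$ preserves this $C^{\delta_1}$ bound by construction of $\overline{\pi}$ and of the corrector function $u$. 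Condition \textbf{b} is obtained by propagating the weighted $L^\infty$ bound along characteristics (the absorption only decreases $f$, and the modulus of velocity along characteristics is controlled by $|v|+T(\|\nabla\phi^g\|_\infty+\|F\|_\infty+\|\overline{\mathcal{E}}\|_\infty)\leq C(1+|v|)$, hence equivalence of the weights). Condition \textbf{c} is a similar transport estimate on $\nabla_{x,v}f$: along characteristics one picks up a factor $\exp(T\|\nabla(F+\nabla\phi^g+\overline{\mathcal{E}}-\nabla\overline{\varphi})\|_\infty)$ which is bounded, plus a $C^{\delta_2}$ interpolation; the jumps on $\gamma^-$ are controlled because $U\in C^1_b$ and $\Upsilon\in C^\infty$, and the set of absorption times is locally finite along each trajectory. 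Condition \textbf{d} is built into the definition of $\pi$ via \eqref{NeutralitedePi}.

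The main obstacle will be condition \textbf{a}, for which one has to combine the geometric property \eqref{eq:bonnescarac} of the reference characteristics (inherited from the Propositions above), the perturbation estimate on characteristics, and a careful separation between low velocities (absorbed after the acceleration step on $[T/3,2T/3]$, where $\overline{\mathcal{E}}_2$ sends every velocity into $\R^2\setminus B(0,M+1)$) and high velocities (absorbed through $\overline{\varphi}_1$ or $\overline{\varphi}_3$), in order to ensure that \emph{every} characteristic indeed reaches $\gamma^{3-}$ at some time in $[T/24,23T/24]$ with a quantitative control sufficient to beat the Jacobian losses when integrating in $v$. Once this is in hand, the constants $c_1,c_2$ in the definition of $\mathcal{S}_\epsilon$ can be fixed depending only on $(\overline{f},\overline{\varphi},F,\omega,T,\gamma)$, and $\epsilon_0$ is then chosen small enough so that the perturbative bounds close up; Schauder's theorem yields the desired fixed point.
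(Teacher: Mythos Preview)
Your overall Schauder framework is right, but you have conflated two distinct steps of the argument. The absorption property (that every perturbed characteristic reaches $\gamma^{3-}$ during $[T/24,23T/24]$) is \emph{not} used in the paper to establish that $\mathcal V$ sends $\mathcal S_\varepsilon$ into itself; it is the content of a separate lemma (Lemma~\ref{LemCaracRencontre}) which shows that the fixed point, once obtained, actually satisfies $f(T,\cdot,\cdot)=0$ outside $\omega$. In particular, absorption cannot yield condition~{\bf a}: on the time interval before a given characteristic first meets $\gamma^{3-}$, no absorption has taken place and $f$ is simply the transported $f_0$, so $\int f\,dv$ is of size $\|f_0\|_*$ regardless of what happens later. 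Your phrase ``From this absorption, one gets condition~{\bf a}'' is therefore misleading, and the long paragraph on characteristics reaching $\gamma^{3-}$ is irrelevant for the present lemma.

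The paper's route to condition~{\bf a} is much shorter: once {\bf b} and {\bf c} are established (with constants $c_1,c_2$ depending only on $\overline f,\overline\varphi,F,T$), an interpolation between the weighted $L^\infty$ bound and the $C^{\delta_2}$ bound gives $\|\int (\mathcal V[g]-\overline f)\,dv\|_{C^{\delta_1}(\Omega_T)}\leq C\big(\|f_0\|_{C^1_b}+\|(1+|v|)^\gamma f_0\|_{L^\infty}\big)$, and this is $\leq\varepsilon$ provided $\kappa,\kappa'$ are small enough. The specific choice $\delta_1=\frac{\gamma-n}{2(\gamma+1)}$, $\delta_2=\frac{\gamma}{\gamma+1}$ is designed precisely for this interpolation. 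Your sketch does eventually invoke smallness of $f_0$, but you do not explain how the H\"older seminorm in $C^{\delta_1}$ is controlled; the ``Jacobian gain'' you mention is not what is used here.

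A second gap concerns condition~{\bf c}. Saying that the set of absorption times is ``locally finite'' is not enough: each crossing of $\gamma^-$ produces a jump $|\nabla f(t^+)|\leq |\nabla f(t^-)|+C|f(t^-)|$, so one needs a \emph{uniform} bound on the number $n(x,v)$ of crossings. The paper obtains $n(x,v)\leq C(1+|v|)$ from $\mathrm{dist}(\gamma^-,\gamma^+)>0$ together with Lemma~\ref{LemCrucial}; combined with the weighted decay $|f|\leq C(1+|v|)^{-\gamma}$ this keeps the accumulated jump contribution bounded. Without this counting argument your estimate for {\bf c} does not close.
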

The proof is almost the same as in \cite[Section 3.3]{OG03}. In order to avoid to repeat it, we only give the main arguments and refer to it for the details. We only focus on the main differences. \par
We endow the domain ${\mathcal S}_{\varepsilon}$ with the norm of $C^{0}([0,T] \times \T^{2} \times \R^{2})$. The existence of a fixed point of ${\mathcal V}$ on ${\mathcal S}_{\varepsilon}$ relies on Schauder's theorem. Accordingly, we have to prove that ${\mathcal S}_{\varepsilon}$ is a convex compact subset of $C^{0}([0,T] \times \T^{2} \times \R^{2})$, that ${\mathcal V}$ is continuous on ${\mathcal S}_{\varepsilon}$ for this topology, and finally that ${\mathcal V}({\mathcal S}_{\varepsilon}) \subset {\mathcal S}_{\varepsilon}$. \par
That ${\mathcal S}_{\varepsilon}$ is convex is clear; that it is compact follows from Ascoli's theorem, using both uniform H\"older estimates and the uniform weighted estimates. \par
\ \par
Now let us discuss the continuity of ${\mathcal V}$. Here the proof of \cite[Section 3.3]{OG03} actually holds without further modification. Let us briefly explain the argument. Due to the compactness of ${\mathcal S}_{\varepsilon}$, it is sufficient to prove that if $f_{n} \rightarrow f$ in ${\mathcal S}_{\varepsilon}$, then ${\mathcal V}[f_{n}] \rightarrow {\mathcal V}[f]$ pointwise. Let us fix $(x,v) \in \T^{2} \times \R^{2}$. Call $(X^{n},V^{n})$ and $(X,V)$ the characteristics associated to the force $F + \nabla \phi^{f_{n}}$ and $F + \nabla \phi^{f}$, respectively. By Gronwall's lemma, $(X^{n},V^{n})$ converges to $(X,V)$ uniformly on compacts. \par
If there was no absorption (that is, if we took $U=0$), then the convergence \[{\mathcal V}[f_{n}](t,x,v) \rightarrow {\mathcal V}[f](t,x,v)\] would follow from $\nabla \phi^{f_{n}} \rightarrow \nabla \phi^{f}$ uniformly on $[0,T] \times \T^{2}$ and Gronwall's lemma. The difficulty comes from the fact that we have to take into account in ${\mathcal V}[f](t,x,v)$ the various times of absorption on $\gamma^{-}$. But from the convergence of $(X^{n},V^{n})$ to $(X,V)$ (uniformly on compacts), one can deduce that for $n$ large enough, $(X^{n},V^{n})(\cdot,0,x,v)$ meets $\gamma^{-}$ the same number of times as $(X,V)(\cdot,0,x,v)$, and that the intersection points of $(X^{n},V^{n})(\cdot,0,x,v)$ and $\gamma^{-}$ converge towards those of $(X,V)(\cdot,0,x,v)$. Then the continuity of ${\mathcal V}$ follows. \par
\ \par
The main point in the proof is to establish that ${\mathcal V}({\mathcal S}_{\varepsilon}) \subset {\mathcal S}_{\varepsilon}$. The crucial estimate here is the following.
\begin{lem} \label{LemCrucial}
Let $g \in {\mathcal S}_{\varepsilon}$, and $(X,V)$ the characteristics associated to $F + \nabla \phi^{g}$. Then one has
\begin{equation} \label{CompareModuleVitesse}
\big| |v|- |V(t,0,x,v)| \big| \leq 1 + t \| F + \nabla \phi^{g} \|_{\infty}.
\end{equation}
\end{lem}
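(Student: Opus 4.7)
The plan is to prove the bound by direct integration of the characteristic ODE, noting that the extra ``$+1$'' on the right-hand side is harmless slack (convenient later on, e.g.\ to avoid degeneracy at $t=0$ or small~$t$).

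First I would observe that, since $g \in \mathcal{S}_\varepsilon$, condition~\textbf{b} together with $\gamma>n$ makes $\int_{\R^2} g(t,x,v)\,dv$ well-defined and bounded, and condition~\textbf{a} (together with the H\"older-in-$t$ estimate~\textbf{c}) yields a bounded H\"older right-hand side for \eqref{defphiPoisson}. Elliptic regularity on the torus then provides $\nabla \phi^g \in L^\infty([0,T]\times \T^2;\R^2)$, so together with the assumption $F \in L^\infty_t W^{1,\infty}_{x,v}$ the force field $F+\nabla\phi^g$ belongs to $L^\infty$ and is Lipschitz in $(x,v)$. In particular the characteristic system $\dot X = V$, $\dot V = F(s,X,V) + \nabla\phi^g(s,X)$ with initial data $(X(0),V(0))=(x,v)$ has a unique global-in-time $C^1$ solution, and the quantity $V(t,0,x,v)$ in the statement is unambiguously defined.

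Next I would integrate the velocity equation from $0$ to $t$, writing
\begin{equation*}
V(t,0,x,v) - v \;=\; \int_0^t \bigl[ F(s, X(s,0,x,v), V(s,0,x,v)) + \nabla\phi^g(s, X(s,0,x,v)) \bigr]\, ds.
\end{equation*}
Taking the Euclidean norm and pulling it inside the integral immediately yields
\begin{equation*}
| V(t,0,x,v) - v | \;\le\; t\, \| F + \nabla \phi^g \|_{L^\infty((0,T)\times \T^2 \times \R^2)}.
\end{equation*}

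Finally, applying the reverse triangle inequality $\bigl| |V(t,0,x,v)| - |v| \bigr| \le |V(t,0,x,v) - v|$ gives the estimate
\begin{equation*}
\bigl| \, |v| - |V(t,0,x,v)| \, \bigr| \;\le\; t\, \| F + \nabla \phi^g \|_\infty \;\le\; 1 + t\, \| F + \nabla \phi^g \|_\infty,
\end{equation*}
which is \eqref{CompareModuleVitesse}. There is no real obstacle here: the statement is essentially a one-line Gronwall-free ODE estimate, and the only point worth stressing is that the regularity of~$g$ enforced by membership in $\mathcal{S}_\varepsilon$ is exactly what is needed to ensure $\nabla\phi^g$ is a bona fide bounded force so that the characteristics make sense in the first place.
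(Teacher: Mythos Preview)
Your proof is correct and matches the paper's approach exactly: the paper does not give a formal proof but simply declares the lemma ``trivial in the case under view, even with $|v - V(t,0,x,v)|$ on the left hand side,'' which is precisely the stronger bound you derive before applying the reverse triangle inequality. Your added remarks on why $\nabla\phi^g$ is bounded and the characteristics are well-defined are appropriate background that the paper leaves implicit.
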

This lemma is trivial in the case under view, even with $|v - V(t,0,x,v)|$ on the left hand side. But since the estimate with $|v - V(t,0,x,v)|$ on the left hand side is not valid in the presence of a magnetic field, we prefer to use \eqref{CompareModuleVitesse}. \par
\ \par
Let $g \in {\mathcal S}_{\varepsilon}$. That the point {\bf d.} is true for ${\mathcal V}[g]$ comes from the construction, in particular from the choice of the operator $\Pi$ (see \eqref{NeutralitedePi}). \par
\ \par
Let us explain why the point {\bf b.} is satisfied by $f:=\tilde{{\mathcal V}}[g]$. From the construction, on $\gamma^{-}$ one has $|f(t^{+},x,v)| \leq |f(t^{-},x,v)|$. It follows that
\begin{equation}
\nonumber
|f(t,x,v)| \leq |f_{0}[(X,V)(0,t,x,v)]|.
\end{equation}
Now,
\begin{eqnarray*}
|f(t,x,v)|
&\leq& \|(1+|v|)^{\gamma}f_{0}\|_{L^{\infty}} \Big[ 1 + \big| |v| - \big(|v| - |V(0,t,x,v)| \big) \big| \Big]^{-\gamma} \\
&\leq& \|(1+|v|)^{\gamma}f_{0}\|_{L^{\infty}} \left( \frac{1+ \big| |v| - |V(0,t,x,v)| \big|}{1+|v|} \right)^{\gamma},
\end{eqnarray*}
where we used
\begin{equation*}
(1+|x-x'|)^{-1} \leq \frac{1+|x|}{1+|x'|}.
\end{equation*}
Note that $\| F + \nabla \varphi^{g} \|_{\infty} \leq \| F\|_{\infty}  + \varepsilon \leq \| F\|_{\infty}  + 1$.
With Lemma \ref{LemCrucial}, we deduce that for some $C>0$ independent of $f_{0}$ and $\varepsilon$:
\begin{equation}
\nonumber
|(1+|v|)^{\gamma} f(t,x,v)| \leq  C \|(1+|v|)^{\gamma}f_{0}\|_{L^{\infty}} .
\end{equation}
Then the fact that ${\mathcal V}[g]$ also satisfies {\bf b.} follows from the construction of the operator $\Pi$.
 \ \par
\ \par
Let us now explain the point {\bf c.} We have the following lemma:
\begin{lem}
\label{LemmeRegHV}
For $g \in {\mathcal S}_\varepsilon$, one has $\tilde{{\mathcal V}}[g] \in
C^{1}(Q_{T} \backslash \Sigma_{T})$, with $\Sigma_{T}:= [0,T] \times \gamma^{-}$.
Moreover, for any $(t,x,v)$ and $(t',x',v')$ in $[0,T]
\times [ \T^{2} \backslash \omega ] \times \R^{2}$, with $|
v-v' | \leq 1$, one has,
\begin{multline}
\label{EstGradV2}
| \tilde{{\mathcal V}}[g] (t,x,v) - \tilde{{\mathcal V}}[g] (t',x',v') | \leq
C [ \|f_{0}\|_{C_{b}^{1}(\T^{2} \times \R^{2})} + 
\| (1+ |v|)^\gamma f_0 \|_{L^\infty(\T^{2} \times \R^{2})} ] \\
\times (1+|v|) |(t,x,v)- (t',x',v')|,
\end{multline}
and also
\begin{equation}
\label{EstGradV3}
| \tilde{{\mathcal V}}[g] (t,x,v) - \tilde{{\mathcal V}}[g] (t,x',v') | \leq
C [ \|f_{0}\|_{C_{b}^{1}(\T^{2} \times \R^{2})} + 
\| (1+ |v|)^\gamma f_0 \|_{L^\infty(\T^{2} \times \R^{2})} ]
|(x,v)- (x',v')|,
\end{equation}
the constant $C$ being independent of $f_0$.
\end{lem}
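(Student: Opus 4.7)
The plan is to represent $f:=\tilde{\mathcal{V}}[g]$ explicitly along characteristics and then differentiate, following \cite[Section 3.3]{OG03}, with the extra force $F$ absorbed into Gronwall constants. Set $\mathcal{F} := F + \nabla \phi^{g} + \overline{\mathcal{E}} - \nabla \overline{\varphi}$ and let $(X,V)(s;t,x,v)$ denote the associated flow, i.e.\ $\dot X = V$, $\dot V = \mathcal{F}(\cdot,X,V)$ with $(X,V)(t;t,x,v)=(x,v)$. For $(t,x,v) \in Q_T \setminus \Sigma_T$, let $0 < \tau_1(t,x,v) < \dots < \tau_k(t,x,v) < t$ be the instants at which the backward trajectory $s \mapsto (X,V)(s;t,x,v)$ meets $\gamma^-$. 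Integrating \eqref{EqLin3} along characteristics and iterating the jump condition on $\gamma^-$ yields
\begin{equation}
\label{ReprFormula}
f(t,x,v) = f_0\bigl((X,V)(0;t,x,v)\bigr) \prod_{j=1}^{k} \bigl[\, 1 - \Upsilon(\tau_j)\bigl(1 - U(X(\tau_j;t,x,v), V(\tau_j;t,x,v))\bigr) \bigr].
\end{equation}
Between two successive $\gamma^-$-crossings the trajectory must cross $\gamma^+$; since $\dist(\gamma^-,\gamma^+) > 0$ in phase space while $|V|$ is controlled on $[0,T]$ thanks to Lemma~\ref{LemCrucial}, the integer $k$ is bounded uniformly by some $N = N(T, r_0, \|\mathcal{F}\|_\infty)$.

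The next ingredient is the regularity of the flow. Condition {\bf a.} in $\mathcal{S}_\varepsilon$ together with Schauder estimates for \eqref{defphiPoisson} yields $\nabla\phi^g \in L^\infty_t C^{1,\delta_1}_x$; combined with $F \in L^\infty_t W^{1,\infty}_{x,v}$ and the smoothness of $\overline{\mathcal{E}} - \nabla\overline{\varphi}$, this gives $\nabla_{x,v}\mathcal{F} \in L^\infty$. Gronwall's inequality then provides $|\nabla_{x,v}(X,V)(s;t,x,v)| \le C$ uniformly in $(s,t,x,v)$. Differentiating the semigroup identity $(X,V)(s;t+h,x,v) = (X,V)\bigl(s;t,(X,V)(t;t+h,x,v)\bigr)$ at $h=0$ produces
\begin{equation}
\partial_t (X,V)(s;t,x,v) = -\nabla_x (X,V)(s;t,x,v)\, v - \nabla_v (X,V)(s;t,x,v)\, \mathcal{F}(t,x,v),
\end{equation}
whence $|\partial_t (X,V)(s;t,x,v)| \le C(1+|v|)$. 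The defining condition $v\cdot\nu(x) < -|v|/10$ with $|v|>1/2$ on $\gamma^-$ makes each crossing transverse (the $\tau$-derivative of $|X-x_0|$ at $\tau=\tau_j$ equals $\nu\cdot V$, bounded away from $0$ by $1/20$); by the implicit function theorem, each $\tau_j$ is locally $C^1$ off $\Sigma_T$ with $\nabla_{x,v}\tau_j = O(1)$ and $\partial_t\tau_j = O(1+|v|)$. Together with \eqref{ReprFormula} this yields $f \in C^1(Q_T \setminus \Sigma_T)$.

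Finally, applying the Leibniz rule to \eqref{ReprFormula} and using the preceding bounds, the $(x,v)$-gradient of $f_0 \circ (X,V)(0;t,\cdot)$ is bounded by $C\|f_0\|_{C^1_b}$, each absorption factor has $(x,v)$-gradient bounded by $C\|\Upsilon U\|_{C^1}$, and $|f_0\circ(X,V)(0;t,\cdot)| \le \|f_0\|_{L^\infty} \le \|(1+|v|)^\gamma f_0\|_{L^\infty}$; summing over the $O(N)$ terms yields \eqref{EstGradV3}. For \eqref{EstGradV2}, only the $t$-derivative is new, and it acquires the extra $(1+|v|)$ factor from $\partial_t(X,V)(0;t,\cdot)$ and $\partial_t\tau_j$; the hypothesis $|v-v'|\le 1$ lets one join $(t,x,v)$ and $(t',x',v')$ by a short (piecewise straight) path inside $[0,T]\times[\T^2\setminus\omega]\times\R^2$ along which the velocity stays in $B(v,1)$, so the bound $C(1+|v|)$ is uniform along the path and integrates to the stated inequality. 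The main technical obstacle is the $C^1$ control of the crossing times $\tau_j$ together with the uniform bound on $k$; both rely on the structural transversality of $\gamma^-$ and on $\dist(\gamma^-,\gamma^+) > 0$, so the presence of $F$ affects only the Gronwall constants and the argument proceeds essentially as in \cite{OG03}.
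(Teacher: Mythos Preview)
Your overall scheme---write $f$ as $f_0\circ(X,V)(0;t,\cdot)$ times a product of absorption factors and differentiate via Leibniz---is the right one and matches the paper. However, the claim that the number of $\gamma^-$-crossings is bounded by a constant $N=N(T,r_0,\|\mathcal F\|_\infty)$ independent of $(x,v)$ is false, and this breaks your derivation of \eqref{EstGradV3}. Lemma~\ref{LemCrucial} does \emph{not} say that $|V|$ is bounded; it says $\big||V|-|v|\big|$ is bounded, so for large $|v|$ the phase-space speed $|(\dot X,\dot V)|\sim|v|$ is large. Since $\dist(\gamma^-,\gamma^+)$ is a fixed positive number while the trajectory moves with speed $\sim|v|$, the time between successive $\gamma^-$-crossings is $\gtrsim 1/|v|$ and hence only
\[
k=n(x,v)\ \leq\ C\big(1+\max_{t}|V(t,0,x,v)|\big)\ \leq\ C(1+|v|),
\]
which is exactly what the paper obtains. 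With this (correct) bound, your Leibniz estimate using merely $|f_0\circ(X,V)(0;t,\cdot)|\le\|f_0\|_{L^\infty}$ yields $|\nabla_{x,v}f|\le C(1+|v|)[\,\|f_0\|_{C^1_b}+\|f_0\|_{L^\infty}\,]$, which is enough for \eqref{EstGradV2} but \emph{not} for \eqref{EstGradV3}.

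The missing idea is to exploit the weighted decay at the foot of the characteristic: by point~{\bf b.} (equivalently Lemma~\ref{LemCrucial} applied to $f_0$), one has $|f_0\circ(X,V)(0;t,x,v)|\le C\|(1+|v|)^{\gamma}f_0\|_{L^\infty}(1+|v|)^{-\gamma}$. Each jump on $\gamma^-$ contributes at most $C|f(t^-)|$ to the gradient (this is the inequality $|\nabla f(t^+)|\le|\nabla f(t^-)|+C|f(t^-)|$ in the paper), so summing over the $C(1+|v|)$ crossings gives
\[
|\nabla_{x,v}f|\ \le\ C\|f_0\|_{C^1_b}\ +\ C(1+|v|)\cdot\|(1+|v|)^{\gamma}f_0\|_{L^\infty}(1+|v|)^{-\gamma}\ \le\ C\big[\|f_0\|_{C^1_b}+\|(1+|v|)^{\gamma}f_0\|_{L^\infty}\big],
\]
since $\gamma>1$. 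This is precisely how the paper closes \eqref{EstGradV3}; once you replace the spurious uniform bound on $k$ by $k\le C(1+|v|)$ and insert the $(1+|v|)^{-\gamma}$ decay of $f_0$ along the characteristic (rather than just $\|f_0\|_{L^\infty}$), your argument goes through.
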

This lemma is rather technical. Actually without absorption, this estimate follows from Gronwall's lemma and the regularity of $\tilde{\mathcal V}[g]$ follows from the fact that $f_{0}$ and the characteristics are of class $C^{1}$. But here at each passage in $\gamma^{-}$, there is a jump between $\nabla \tilde{{\mathcal V}}[g](t^{+},x,v)$ and  $\nabla \tilde{{\mathcal V}}[g](t^{-},x,v)$. One can see by using an explicit computation based on the last equation in \eqref{EqLin3} that
\begin{equation*}
|\nabla \tilde{{\mathcal V}}[g](t^{+},x,v)| \leq  |\nabla \tilde{{\mathcal V}}[g](t^{-},x,v)| + C |\tilde{{\mathcal V}}[g](t^{-},x,v)|,
\end{equation*}
where $\nabla$ is either $\nabla_{x}$ or $\nabla_{v}$. \par
The main point is that the number $n(x,v)$ of times a characteristic $(X,V)(t,0,x,v)$ can cross $\gamma^{-}$ is estimated as follows. Using $\mbox{dist}(\gamma^{-},\gamma^{+})>0$ and Lemma \ref{LemCrucial}, we infer that
\begin{equation} \nonumber
n(x,v) \leq C (1+ \max_{t} |V(t,0,x,v)| ) \leq C(1 + |v|).
\end{equation}
This allows to bound $\nabla \tilde{{\mathcal V}}[g]$ using to the uniform estimates on $(1+ |v|)^{\gamma} \tilde{{\mathcal V}}[g]$. \par
\ \par
Finally, point {\bf a.} is a consequence of points {\bf b.}, {\bf c.} and an easy interpolation argument between weighted H\"older spaces, provided that $f_{0}$ is small enough. \par
\ \par
\subsection{A fixed point is relevant}
\label{Relevant}
Let us prove that, provided that $\varepsilon$ is small enough, the fixed point that we constructed is indeed a solution $f$ starting at $f_{0}$ and reaching $0$ in $\T^{2} \setminus \omega$ at time $T$. For this we show that $\tilde{V}[g](T)=0$ in $\T^{2} \times \R^{2}$. \par
Call again $(X,V)$ the characteristics associated to $F + \nabla \phi^{f} -\nabla \overline{\phi} + \overline{\mathcal E}$. \par
\ \par
\noindent
Due to the construction, it is enough to prove the following lemma. \par
\begin{lem} \label{LemCaracRencontre}
There exists $\epsilon_1>0$ such that for any $0<\epsilon<\epsilon_1$, all the characteristics $(X,V)$ meet $\gamma^{3-}$ for some time in $[\frac{T}{24},\frac{23T}{24}]$.
\end{lem}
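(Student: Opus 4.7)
The idea is to realize the conclusion by comparison with a reference flow. Let $(\overline{X},\overline{V})$ denote the characteristics associated to the reference force $F+\overline{{\mathcal E}}$ (which coincides with $F+\nabla\overline{\phi}_{1}$ on the first third, with $F+\overline{{\mathcal E}}_{2}$ on the middle third, and with $F+\nabla\overline{\phi}_{3}$ on the last third); these are precisely the characteristics that Propositions \ref{PropGV} and \ref{PropAccelerePartout} were built to control. I will show (i) $(\overline{X},\overline{V})(\cdot,0,x,v)$ meets $\gamma^{3-}$ at some time strictly inside $(T/24,23T/24)$ for every $(x,v)$; (ii) $(X,V)$ and $(\overline{X},\overline{V})$ remain $C_{T}\varepsilon$-close on $[0,T]$; then conclude by the transversality of the $S(x_{0},r_{0})$ crossing together with $\dist([S(x_{0},r_{0})\times\R^{2}]\setminus\gamma^{2-},\gamma^{3-})>0$.

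For the comparison step, the vector fields defining $(X,V)$ and $(\overline{X},\overline{V})$ differ only by $\nabla\phi^{f}-\nabla\overline{\phi}$. From $f\in\mathcal{S}_{\varepsilon}$ (property \textbf{a.}) we have $\|\int_{\R^{2}}(f-\overline{f})\,dv\|_{C^{\delta_{1}}(\Omega_{T})}\leq\varepsilon$, while the mean shift between the two Poisson equations is bounded by $|\int\!\!\int f_{0}\,dx\,dv|$, itself $\leq\varepsilon$ for $f_{0}$ small. Schauder estimates then give $\|\nabla\phi^{f}-\nabla\overline{\phi}\|_{L^{\infty}([0,T]\times\T^{2})}\leq C\varepsilon$, and a standard Gronwall argument applied to the characteristic ODE yields $|X(t,0,x,v)-\overline{X}(t,0,x,v)|+|V(t,0,x,v)-\overline{V}(t,0,x,v)|\leq C_{T}\varepsilon$, uniformly in $(t,x,v)$.

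For the reference step, I split according to whether $|v|\geq\max(\underline{m}_{1},2\alpha)$ or not. In the first case, Proposition \ref{PropGV} applied on $[0,T/3]$ with $H=F|_{[0,T/3]}$ produces $t_{0}\in(T/9,2T/9)$ with $\overline{X}(t_{0})\in B(x_{0},r_{0}/4)$ and $|\overline{V}(t_{0})|\geq|v|/2\geq\alpha$. In the second, since $|v|+(T/3)(\|F\|_{\infty}+\|\nabla\overline{\phi}_{1}\|_{\infty})<M_{1}\leq M$, Proposition \ref{PropAccelerePartout} applies on $[T/3,2T/3]$ and gives $|\overline{V}(2T/3)|>M+1\geq\max(\underline{m}_{3},2\alpha)+1$; then Proposition \ref{PropGV} applied on $[2T/3,T]$ gives $t_{0}\in(7T/9,8T/9)$ with $\overline{X}(t_{0})\in B(x_{0},r_{0}/4)$ and $|\overline{V}(t_{0})|\geq\alpha$. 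In either case the trajectory must previously have crossed $S(x_{0},r_{0})$ at some $t_{1}<t_{0}$; writing $\overline{X}(t_{0})=\overline{X}(t_{1})+(t_{0}-t_{1})\overline{V}(t_{1})+O((t_{0}-t_{1})^{2})$, a chord-to-radius angle computation combined with $|\overline{V}|\geq\alpha$ yields $\overline{V}(t_{1})\cdot\nu(\overline{X}(t_{1}))\leq-(3/5)|\overline{V}(t_{1})|$ and $|\overline{V}(t_{1})|\geq2$, i.e.\ $(\overline{X}(t_{1}),\overline{V}(t_{1}))\in\gamma^{3-}$ with a quantitative margin. Here the calibration $\alpha\geq C_{r_{0}}(1+\|F\|_{\infty}+\|\overline{\varphi}_{1}\|_{\infty}+\|\overline{\varphi}_{3}\|_{\infty})$ is exactly what kills the Taylor remainder, while $\alpha\geq 600\,r_{0}/T$ ensures $t_{1}\in(T/24,23T/24)$.

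Finally, the reference crossing with $S(x_{0},r_{0})$ is transversal and sits in the interior of $\gamma^{3-}$ with a margin independent of $\varepsilon$. For $\varepsilon_{1}$ small enough, the $C_{T}\varepsilon$-closeness of Step~1 together with $\dist([S(x_{0},r_{0})\times\R^{2}]\setminus\gamma^{2-},\gamma^{3-})>0$ forces $(X,V)(\cdot,0,x,v)$ to cross $S(x_{0},r_{0})$ at a nearby time $t_{1}'\in[T/24,23T/24]$ with $(X(t_{1}'),V(t_{1}'))\in\gamma^{3-}$. The main delicate point is the geometric argument in Step~2: we must land not merely in the geometric-control set $\gamma^{-}$ but in the strictly smaller $\gamma^{3-}$, and this is precisely what the two lower bounds on $\alpha$ through $C_{r_{0}}$ and $600\,r_{0}/T$ are designed to achieve.
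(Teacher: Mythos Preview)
Your proof is correct and follows essentially the same approach as the paper: first show the reference characteristics $(\overline{X},\overline{V})$ associated to $F+\overline{\mathcal E}$ hit the absorption set with a quantitative margin, then transfer this to $(X,V)$ via a Gronwall comparison using $\|\nabla\phi^{f}-\nabla\overline{\phi}\|_{\infty}\leq C\varepsilon$. The only cosmetic differences are that the paper splits cases on $|\overline V(T/3,0,x,v)|$ rather than on $|v|$ (the two are equivalent through the definition of $M_{1}$), and the paper makes the ``margin'' explicit by introducing an intermediate set $\gamma^{4-}$ (with incidence $\leq -\tfrac14|v|$ and $|v|\geq\tfrac52$) in place of your $-3/5$ bound; your geometric crossing argument via the Taylor expansion and the calibration of $\alpha$ is exactly the mechanism the paper uses, though the paper spells out the intermediate value step by first checking that $\overline{X}(t)-2s\,\overline{V}(t)\notin B(x_{0},3r_{0}/2)$ before invoking the crossing.
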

\begin{proof}[Proof of Lemma \ref{LemCaracRencontre}]
We denote by $(\overline X, \overline V)$ the characteristics associated to $F + \overline{{\mathcal E}}$. 
%
\ \par
\noindent
{\bf 1.} We first prove that  for all $(x,v) \in \T^{2} \times \R^{2}$, there exists $\sigma \in [\frac{T}{12},\frac{3T}{12}] \cup [\frac{9T}{12},\frac{11T}{12}]$ such that
\begin{equation} \label{DansG4}
\overline{X}(\sigma,0,x,v) \in \gamma^{4-}:= \left\{ \II (x,v) \in S(x_0,r_0) \times
\R^{2} \ /\ |v| \geq \frac{5}{2} \text{ and } v.\nu(x) \leq -\frac{1}{4} |v| \right\}.
\end{equation}
Let $(x,v) \in \T^{2} \times \R^{2}$.
We claim that that there exists $t \in [\frac{T}{9},\frac{2T}{9}] \cup [\frac{7T}{9}, \frac{8T}{9}]$ such that
\begin{equation} \label{Danslaboule}
\overline{X}(t,0,x,v) \in B(x_{0},r_{0}/4),
\end{equation}
and
\begin{equation}
\label{Danslaboulebis}
|\overline{V}(t,0,x,v))| \geq \alpha.
\end{equation}
%
We discuss this according to the modulus of $V(T/3,0,x,v)$:
\begin{itemize}
\item If $|V(T/3,0,x,v)| \geq M \geq M_1$, then one can observe that $|v| \geq \max(\underline{m}_1,2\alpha)$, using the characteristics equation. Then by Proposition \ref{PropGV}, the claim is proved for some $t \in [\frac{T}{9},\frac{2T}{9}]  $.
\item If $|V(T/3,0,x,v)| <  M$, then by Proposition \ref{PropAccelerePartout}, $|V(2T/3,0,x,v)| \geq M+1 \geq M_2$, and one can once again apply Proposition \ref{PropGV}, to prove the claim for some $t \in [\frac{7T}{9}, \frac{8T}{9}]$.
\end{itemize}
Now, one can easily see that for some $s >0$ with $s < \frac{3 r_{0}}{\alpha} \leq \frac{T}{200}$,
\begin{equation}
\label{geometricfree}
\overline{X}(t,0,x,v) - s \overline{V}(t,0,x,v) \in S(x_{0},r_{0}) \text{ with } \overline{V}(t,0,x,v).\nu \leq - \frac{\sqrt{3}}{2} | \overline{V}(t,0,x,v)|,
\end{equation}
because a straight line arising from $B(x_{0},r_{0}/2)$ cuts $S(x_{0},r_{0})$ with angle to the normal $\nu$ at the circle of value at most $\pi/6$. The same argument shows that:
$$
\overline{X}(t,0,x,v) -2s \overline{V}(t,0,x,v) \notin B(x_{0},3r_{0}/2).
$$
Now it is clear that,
\begin{equation}
\label{estim1}
|\overline{V}(\tau,0,x,v) - \overline{V}(t,0,x,v) | \leq 2 s [\| F \|_{\infty} + \| \nabla \overline{\phi}_{1} \|_{\infty} + \| \nabla \overline{\phi}_{3} \|_{\infty}] \text{ for } \tau \in [t-2s,t],
\end{equation}
\begin{multline}
\label{estim2}
|\overline{X}(\tau,0,x,v) -\overline{X}(t,0,x,v) +(t-\tau) \overline{V}(t,0,x,v) | \\ 
\leq 2s^{2} [\| F \|_{\infty} + \| \nabla \overline{\phi}_{1} \|_{\infty}
+ \| \nabla \overline{\phi}_{3} \|_{\infty}] \text{ for } \tau \in [t-2s,t].
\end{multline}
In the other hand, if $C_{r_{0}}$ is large enough, we have the estimate:
\begin{equation*}
|\overline{X}(t-2s,0,x,v) -\overline{X}(t,0,x,v) +2s \overline{V}(t,0,x,v) | \leq \frac{r_{0}}{2}.
\end{equation*}
Therefore by the intermediate value theorem that there exists $\sigma \in [t- \frac{T}{100},t]$, such that $\overline{X}(\tau,0,x,v) \in S(x_{0},r_{0})$.
Using \eqref{geometricfree}, \eqref{estim1} and \eqref{estim2}, and provided that $C_{r_0}$ is large enough (in terms of $r_0$ only),  we deduce that for this $\sigma$, \eqref{DansG4} applies. %
\ \par
\ \par
\noindent
{\bf 2.} Now to prove that all the characteristics meet $\gamma^{3-}$ during $[\frac{T}{12},\frac{11T}{12}]$, let us compare $(\overline{X},\overline{V})$ and $(X,V)$. Using point {\bf a.} in the definition of ${\mathcal S}_{\varepsilon}$, we deduce by Gronwall's lemma and elliptic estimates that
\begin{equation*}
| (X,V) - (\overline{X},\overline{V}) | \leq C \varepsilon.
\end{equation*}
Proceeding as previously, we deduce that if $\varepsilon$ is small enough, then for all $(x,v) \in \T^{2} \times \R^{2}$, there exists $t \in [\frac{T}{24},\frac{23T}{24}]$, such that
\begin{equation*}
(X,V)(t,0,x,v) \in \gamma^{3-}.
\end{equation*}
\end{proof}
We can now gather all the ingredients to prove Theorem  \ref{Theo:Bounded}.
\begin{proof}[Proof of Theorem \ref{Theo:Bounded}]
Using Lemma \ref{Lem:ExistFP}, we deduce the existence of some fixed point $f={\mathcal V}[f]$. Using Lemma \ref{LemCaracRencontre}, and \eqref{DefU}, \eqref{DefUpsilon} and \eqref{EqLin3}, we see that, provided that $\varepsilon$ is small enough, $\tilde{{\mathcal V
}}[f](T)=0$. Hence $f$ satisfies $\mbox{Supp\,}[f(T,\cdot,\cdot)] \subset \omega \times \R^{2}$. \par
It remains to prove that $f$ satisfies \eqref{Vlasov}. This comes from the fact that, due to \eqref{EetPhi} and \eqref{EqLin3}, one has
\begin{equation*}
\partial_{t} f + v.\nabla_{x} f + (F+\nabla \phi^{f}).\nabla_{v} f = 0
\text{ in } [0,T] \times [\T^{n} \setminus \omega] \times \R^{n}.
\end{equation*}
Since $f$ is $C^{1}$, one has
\begin{equation*}
\partial_{t} f + v.\nabla_{x} f + (F+\nabla \phi^{f}).\nabla_{v} f = G
\text{ in } [0,T] \times \T^{n} \times \R^{n},
\end{equation*}
for some continuous function $G$. This concludes the proof of Theorem \ref{Theo:Bounded}.
\end{proof}
\section{Global controllability for the bounded external field case}
\label{Sec:BEFglobal}
In this section, we prove Theorem \ref{Theo:BoundedGlobal}. \par
\ \par
We call $H$ a hyperplane in $\R^{n}$ such that its image ${\mathcal H}$ by the canonical surjection $s:\R^{n} \rightarrow \T^{n}$ is included in $\omega$. We recall that ${\mathcal H}$ is supposed to be closed. We call $n_{H}$ a unit vector, orthogonal to ${\mathcal H}$. For $l>0$, we denote 
\begin{equation*}
{\mathcal H}_{l}:= {\mathcal H} + [-l,l] n_{H}.
\end{equation*}
Since ${\mathcal H}$ is closed in $\T^{n}$, we can define ${d} \in \R^{+*}$ such that 
\begin{equation*}
{\mathcal H}_{2d} \subset \omega,
\end{equation*}
and such that $4{d}$ is less than the distance between two different hyperplanes in $s^{-1}({\mathcal H})$. \par
%
%
%
%
%
%
\subsection{Design of the reference solution}
The reference solution is not quite the same as in Section \ref{Sec:BEF}. In order to get a global result, as explained in Section \ref{Sec:strategie}, we will need the following property, refered to as a ``non concentration property'' for the characteristics $(X,V)$ associated to $\overline{\varphi}$ (up to a slight modification {\it inside the control zone}): there exist $c >0$ such that
\begin{equation*}
\forall x,y \in \T^{n}, \ | X(t,0,x,0) - X(t,0,y,0)| \geq c |x-y|.
\end{equation*}

The assumption on the control zone $\omega$ is motivated by the fact that in this case we can atually construct a reference solution whose characteristics satisfy this condition. \par
\ \par
To construct $(\overline{\phi},\overline{f})$, we start with the following lemma.
\begin{lem}\label{LemPhi}
There exists $\varphi \in C^{\infty}(\T^{n};\R)$ such that
\begin{equation} \label{Harm}
\Delta \varphi =0 \text{ on } \T^{n} \setminus {\mathcal H}_{d},
\end{equation}
and
\begin{equation} \label{CoincideavecN}
\nabla \varphi = n_{H} \text{ on } \T^{n} \setminus {\mathcal H}_{d}.
\end{equation}
\end{lem}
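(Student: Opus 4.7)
\medskip

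\noindent\textbf{Proof plan.} The plan is to lift to the universal cover $\R^n$, exploit the closedness of $\mathcal{H}$ to identify $s^{-1}(\mathcal{H})$ with a union of evenly spaced parallel hyperplanes, write down an essentially affine function with gradient $n_H$ on each slab between consecutive preimage hyperplanes (which will automatically $\Z^n$-descend), and then glue smoothly across each tube of width $2d$ around a preimage hyperplane.

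\medskip

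\noindent\emph{Step 1 (structure of $s^{-1}(\mathcal{H})$).} Up to translation I may assume $H$ passes through the origin, so $H=\{y \in \R^n : y \cdot n_H = 0\}$ and $s^{-1}(\mathcal{H}) = H + \Z^n = \{y \in \R^n : y \cdot n_H \in \Lambda\}$, where $\Lambda := \{v \cdot n_H : v \in \Z^n\}$ is an additive subgroup of $\R$. The closedness of $\mathcal{H}$ in $\T^n$ forces $\Lambda$ to be discrete; since $n_H \neq 0$ and $\Z^n$ spans $\R^n$, $\Lambda \neq \{0\}$. Hence $\Lambda = \ell \Z$ for some $\ell > 0$, and the hypothesis on $d$ gives $\ell > 4d$. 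Thus $s^{-1}(\mathcal{H}_d)$ is the disjoint union of the slabs $\{|y \cdot n_H - k\ell| \leq d\}$, $k \in \Z$.

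\medskip

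\noindent\emph{Step 2 (construction on the complement of the tube).} On $\R^n \setminus s^{-1}(\mathcal{H}_d)$, define
\[
\tilde\varphi(y) := y \cdot n_H - k\ell \quad \text{for } y \cdot n_H \in (k\ell + d, (k+1)\ell - d),\ k \in \Z.
\]
Then $\tilde\varphi$ is smooth on this open set with $\nabla \tilde\varphi \equiv n_H$ and $\Delta \tilde\varphi \equiv 0$. Moreover, for any $v \in \Z^n$ one has $v \cdot n_H \in \ell \Z$, so $\tilde\varphi(y+v) = \tilde\varphi(y)$; hence $\tilde\varphi$ descends to a smooth function on $\T^n \setminus \mathcal{H}_d$, satisfying \eqref{Harm}--\eqref{CoincideavecN} on that set.

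\medskip

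\noindent\emph{Step 3 (smooth extension across the tube).} Since $d < \ell/2$, I may pick $\psi \in C^{\infty}(\R;\R)$ such that $\psi(\tau) = \tau + \ell$ on a neighborhood of $\tau = -d$ and $\psi(\tau) = \tau$ on a neighborhood of $\tau = d$ (a standard cutoff interpolation between these two affine functions works). For $k \in \Z$ and $y$ with $|y \cdot n_H - k\ell| \leq d$, set $\varphi(y) := \psi(y \cdot n_H - k\ell)$. By construction this matches $\tilde\varphi$ up to infinite order on $\{y \cdot n_H = k\ell \pm d\}$, so it glues with $\tilde\varphi$ into a smooth $\Z^n$-periodic function on $\R^n$, hence a function $\varphi \in C^{\infty}(\T^n;\R)$. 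On $\T^n \setminus \mathcal{H}_d$ it coincides with $\tilde\varphi$, so \eqref{Harm} and \eqref{CoincideavecN} hold.

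\medskip

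\noindent\emph{Main difficulty.} The only non-trivial point is Step 1, i.e.\ extracting from the closedness assumption the fact that $s^{-1}(\mathcal{H})$ is a discretely spaced family of parallel hyperplanes; the rest is an elementary one-dimensional gluing in the $n_H$ direction.
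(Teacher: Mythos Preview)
Your proof is correct and follows essentially the same idea as the paper: recognize that $n_H$ is the gradient of the affine (hence harmonic) function $y\mapsto y\cdot n_H$ on each slab between consecutive lifts of $\mathcal H$, and then patch smoothly across the width-$2d$ tube. The paper's proof is a two-sentence sketch that simply asserts this works; you have supplied the details it omits --- in particular the identification of $\Lambda=\{v\cdot n_H:v\in\Z^n\}$ with $\ell\Z$ via the closedness hypothesis, and the explicit verification of $\Z^n$-periodicity and of the gluing at $y\cdot n_H=k\ell\pm d$.
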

\begin{proof}[Proof of Lemma \ref{LemPhi}]
In the domain $\T^n \backslash {\mathcal H}_{d}$, $x \mapsto n_{H}$
coincides with the gradient of a harmonic function. Call $\varphi$ a
function in $C^\infty(\T^n;\R)$, whose gradient coincides in ${\mathcal H}_{d}$ with $n_{H}$; 
this function is automatically harmonic in ${\mathcal H}_{d}$. 
\end{proof}
Now given such a $\varphi$, we can construct $\overline{\phi}$ and $\overline{f}$.
Consider a function ${\mathcal Y} \in C^\infty_0(0,T)$
satisfying
\begin{equation}
\label{DefY}
\left\{ \begin{array}{l}
{\mbox{Supp } {\mathcal Y} \subset (\frac{T}{3},\frac{2T}{3}),} \\
{ {\mathcal Y} \geq 0,} \\
{ \displaystyle \int_{[0,T]} {\mathcal Y} =1.}
\end{array} \right.
\end{equation}
Set
\begin{equation*}
\overline{\phi}(t,\cdot) = \left\{ \begin{array}{l}
 \displaystyle 0 \text{ for } t \in \left[0,\frac{T}{3}\right] \cup \left[\frac{2T}{3},T\right], \\
 \displaystyle \mu {\mathcal Y} (t) \varphi(\cdot) \text{ for } t \in \left[\frac{T}{3},\frac{2T}{3}\right],
\end{array} \right.
\end{equation*}
\begin{equation*}
\overline{{\mathcal E}}(t,\cdot) = \left\{ \begin{array}{l}
 \displaystyle 0 \text{ for } t \in \left[0,\frac{T}{3}\right] \cup \left[\frac{2T}{3},T\right], \\
 \displaystyle \mu {\mathcal Y} (t) n_{H} \text{ for } t \in \left[\frac{T}{3},\frac{2T}{3}\right],
\end{array} \right.
\end{equation*}
where $\mu$ is a positive parameter depending on $\omega$, $T$ and $F$ only, according to the following lemma.
\begin{lem}\label{LemExistMu}
Given $\omega$ as above, $T>0$ and $F$, there exists $\mu>0$ such that all the characteristics associated to $\overline{{\mathcal E}}$ meet 
\begin{equation} \label{DefGammaMoinsMoins2} 
\gamma^{3-}:=\left\{ (x,v) \in  \partial {\mathcal H}_{d} \times \R^{n} \ /\ |v| \geq 2 \text{ and } v.\nu \leq - 2 \right\},
\end{equation}
for some time in $[\frac{T}{6},\frac{5T}{6}]$, where $\nu = \pm n_{H}$ is the outward unit vector on $\partial {\mathcal H}_{d}$.
\end{lem}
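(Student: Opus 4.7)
The plan is to pick $\mu$ large (depending only on $T$, $F$ and $\omega$) and to verify the hitting property by splitting initial data $(x,v) \in \T^{n} \times \R^{n}$ according to the size of $|v \cdot n_{H}|$. Let $\ell>4d$ denote the distance in direction $n_{H}$ between two consecutive lifts of $\mathcal{H}$ in $\R^{n}$ (a quantity fixed by $\omega$), and set $C_{F} := T \| F \|_{\infty}+10$. I fix $\mathcal{Y}$ as in \eqref{DefY}, symmetric around $T/2$, so that its primitive $\Psi(t) := \int_{T/3}^{t} \mathcal{Y}$ satisfies $\Psi(T/2)=1/2$. Given $(x,v)$, denote by $(X,V)$ the characteristic issuing from $(x,v)$ at $t=0$ for the force field $F + \overline{\mathcal{E}}$.

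First I would treat the regime $|v \cdot n_{H}| \geq 3 C_{F}+10 \ell /T$. On $[0,T/3]$ only $F$ acts, so $|V(t) \cdot n_{H}| \geq |v \cdot n_{H}| - C_{F} \geq 2 C_{F}+10 \ell /T$, and $t \mapsto X(t) \cdot n_{H}$ is strictly monotone with derivative of absolute value at least $10 \ell / T$. Hence over $[T/6, T/3]$, the $n_{H}$-component of $X$ sweeps a distance strictly greater than $\ell$, and therefore crosses $\partial \mathcal{H}_{d}$ entering some copy of the strip from outside, at a time $t^{*} \in (T/6,T/3)\subset[T/6,5T/6]$. At such a crossing the outward unit normal is $\nu = -\mathrm{sign}(V(t^{*}) \cdot n_{H})\, n_{H}$, so $V(t^{*}) \cdot \nu = -|V(t^{*}) \cdot n_{H}|\leq -2$ and $|V(t^{*})| \geq 2$, as required.

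For the complementary regime $|v \cdot n_{H}| \leq 3 C_{F}+10 \ell /T$, the impulse does the job. On $[T/3,2T/3]$ one has
\begin{equation*}
V(t) \cdot n_{H} = v \cdot n_{H} + \mu \Psi(t) + \rho(t), \qquad |\rho(t)| \leq C_{F}.
\end{equation*}
Choosing $\mu$ so large that $\mu/4 \geq 3 C_{F}+10 \ell / T$, the bound $\Psi(t) \geq 1/2$ valid on $[T/2, 2T/3]$ yields $V(t) \cdot n_{H} \geq \mu/2 -(3 C_{F}+10\ell/T) - C_{F} \geq \mu/4$ on this sub-window. Hence $X \cdot n_{H}$ advances by at least $(T/6)(\mu/4) = \mu T/24$ on $[T/2, 2T/3]$, which exceeds $\ell$ as soon as $\mu \geq 24 \ell /T$. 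Consequently the characteristic enters some copy of $\mathcal{H}_{d}$ from outside at a time $t^{*} \in (T/2,2T/3) \subset [T/6, 5T/6]$, where $V(t^{*}) \cdot \nu = -V(t^{*}) \cdot n_{H} \leq -\mu/4 \leq -2$ with $|V(t^{*})| \geq 2$.

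The main conceptual obstacle is the unboundedness of $v$: a single bounded impulse cannot simultaneously reorient every velocity towards $n_{H}$. The case split above circumvents this, since large $|v \cdot n_{H}|$ already forces a transverse crossing under free transport on $[0,T/3]$, while the impulse handles the (uniformly bounded) complementary regime. Two supporting technical points to verify in the full write-up are (i) the disjointness of consecutive lifts of $\mathcal{H}_{d}$, which follows from $\ell>4d$ and guarantees an unambiguous outward normal $\nu$ at each crossing; and (ii) that the crossing times lie strictly inside $(T/6,5T/6)$, which is ensured by the slack in the estimates ($X \cdot n_{H}$ sweeps strictly more than $\ell$ in each chosen sub-window, producing at least one transverse entrance in the interior of the interval).
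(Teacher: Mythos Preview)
Your proof is correct and follows the same two-case strategy as the paper's (very terse) sketch: large $|v\cdot n_H|$ is handled by transport alone in an early sub-window, while bounded $|v\cdot n_H|$ is handled by the impulse $\mu\,\mathcal{Y}(t)\,n_H$.

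A few minor remarks. First, the lemma as stated concerns the characteristics for $\overline{\mathcal{E}}$ alone; you work with $F+\overline{\mathcal{E}}$, which is harmless (and is in fact what makes $\mu$ genuinely depend on $F$), but be aware that the paper's scheme absorbs $F^\lambda$ separately via a later Gronwall argument. Second, the inequality $\mu/2-(3C_F+10\ell/T)-C_F\geq \mu/4$ actually requires $\mu/4\geq 4C_F+10\ell/T$, not $3C_F+10\ell/T$; just enlarge $\mu$. Third, the symmetry assumption on $\mathcal{Y}$ is avoidable: the paper works on $[2T/3,5T/6]$ after the full impulse, where $\Psi\equiv 1$, instead of your window $[T/2,2T/3]$. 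Finally, your explicit split on $|v\cdot n_H|$ is in fact more careful than the paper's sketch, which literally writes only ``$V(T/6)\cdot n_H$ larger than $c>0$'' and leaves the large-negative case to the reader.
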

Once defined $\overline{\varphi}$, we define $\overline{f}:[0,T] \times \T^{2} \times \R^{2}$ as previously by \eqref{DefZ}-\eqref{Deffbar}.
\begin{proof}[Proof of Lemma \ref{LemExistMu}]
Let $(x,v) \in \T^{n} \times \R^{n}$. Call $(\overline{X},\overline{V})$ the characteristics associated to $\overline{{\mathcal E}}$.  We discuss according to $V(\frac{T}{6},0,x,v) \cdot n_{H}$.
\begin{itemize}
\item If $V(\frac{T}{6},0,x,v) \cdot n_{H}$ is large enough, say larger than $c>0$, then one sees easily using the characteristic equation that there exists $t \in [\frac{T}{6},\frac{T}{4}]$ such that $(\overline{X},\overline{V})(t,0,x,v) \in \gamma^{3-}$.
\item For the other $(x,v)$, one can find $\mu>0$ such that $V(\frac{2T}{3},0,x,v) \cdot n_{H} \geq c$. Then there exists  $t \in [\frac{2T}{3},\frac{5T}{6}]$ such that $(\overline{X},\overline{V})(t,0,x,v) \in \gamma^{3-}$.
\end{itemize}
\end{proof}
\subsection{Definition of the fixed-point operator}
For $\lambda \in (0,1]$, we define again a subset ${\mathcal S}^{\lambda}_{\varepsilon}$ of
$C_{b}^{\delta_2}(Q_{T})$ on which we will define the operator
${\mathcal V}$ (which actually depends on $\lambda$): 
\begin{gather}
\nonumber
\begin{array}{ll}
{{\mathcal S}^{\lambda}_{\varepsilon}:= 
\left\{ \II \ g \in C_{b}^{\delta_2}(Q_{T}) \ \right/ \hfill} & {\hfill }
\end{array} \\
\label{DefS2}
\begin{array}{ll}
{\mathbf a.\ }  &
{\|  \int_{\mathbb{R}^n} (g - \overline{f})dv \|_{C^{\delta_1}(\Omega_{T})} 
\leq \varepsilon,} \\
{\mathbf b.\ } &
{\left. \| (1+ |v|)^{\gamma}( g - \overline{f})\|_{L^{\infty}(Q_{T})} 
\leq c_{1} 
\left[ \II \|f^{\lambda}_{0}\|_{C_{b}^{1}(\T^{n} \times \R^{n})}
+\|(1+|v|)^{\gamma}f^{\lambda}_{0}\|_{C_{b}^{0}(\T^{n} \times \R^{n})} \right], 
\right. } \\
{\mathbf c.\ } &
{\left. \| g - \overline{f} \|_{C_{b}^{\delta_2}(Q_{T})} \leq c_2
\left[ \II \|f^{\lambda}_{0}\|_{C_{b}^{1}(\T^{n} \times \R^{n})}
+\|(1+|v|)^{\gamma}f^{\lambda}_{0}\|_{C_{b}^{0}(\T^{n} \times \R^{n})} \right] ,
\right.} \\
{\mathbf d.\ } &
{\left. \II \forall t \in [0,T],\ \int_{\T^{n} \times \R^{n}}
g(t,x,v) dx dv = \int_{\T^{n} \times \R^{n}} f^{\lambda}_{0}(x,v) 
dx dv\ \right\} ,}
\end{array}
\end{gather}
with $c_1$, $c_2$ to be fixed later depending only on $\gamma$, $T$
and $\omega$ (and hence on $(\overline{f},\varphi)$), but not on
$\lambda$; here, $\delta_1$ and $\delta_2$ are fixed as follows
\begin{equation}
\nonumber
\delta_1= \frac{\gamma -n}{2(n+1)(\gamma+1)} \text{ and } \delta_2=\frac{\gamma}{\gamma+1}.
\end{equation}
For fixed $c_1$ and $c_2$ large enough depending only on
$(\overline{f},\varphi)$, one has for $\lambda$ small enough depending on
$\varepsilon$ that 
\begin{equation*}
\left|\int f_0^\lambda  dvdx \right| \leq \varepsilon,
\end{equation*}
see (\ref{ChgtEchelle}). Hence in that case $g(t,x,v)=f^\lambda_0(x,v)+\overline{f}(t,x,v)$ belongs to
${\mathcal S}^\lambda_\varepsilon$ for $\lambda < \mu(\varepsilon)$,
so  ${\mathcal S}^\lambda_\varepsilon
\not = \emptyset$. From now, we suppose that this is the
case. \par
We write $\Gamma_{1}:={\mathcal H} -d n_{H}$, $\Gamma_{2}:={\mathcal H} +d n_{H}$  and $\Gamma:=\Gamma_{1} \cup \Gamma_{2}$. Let $\nu = -n_{h}$ on $\Gamma_{1}$ and $\nu = n_{h}$ on $\Gamma_{2}$. We define
\begin{gather}
\label{DefGammaMoins2} 
\gamma^{-}:= \left\{ \II (x,v) \in \Gamma \times
\R^{n} \ / \ v.\nu(x) < -1 \right\}, \\
\label{DefGamma32Moins2} 
\gamma^{2-}:= \left\{ \II (x,v) \in \Gamma \times
\R^{n} \ /\ |v| \geq 1 \text{ and } v.\nu(x) \leq - 3/2 \right\}, \\
\label{DefGammaPlus2} 
\gamma^{+}:= \left\{ \II (x,v) \in \Gamma \times
\R^{n} \ /\ v.\nu(x) \geq 0 \right\}.
\end{gather}
Note that $\gamma^{3-}$ defined in \eqref{DefGammaMoinsMoins2} can be reformulated as
\begin{equation*}
\gamma^{3-}= \left\{ \II (x,v) \in  \Gamma \times
\R^{n} \ /\ |v| \geq 2 \text{ and } v.\nu(x) \leq - 2 \right\}.
\end{equation*}
%
%
\ \par
Again, we observe that
\begin{equation}
\nonumber
\mbox{dist}( (\Gamma \times \R^n) \backslash \gamma^{-}; \gamma^{2-}) >0.
\end{equation}
\ \\
We introduce a $C^{\infty} \cap C_{b}^{1}$ regular function $U$ from
$\Gamma \times \R^{n}$ to $\R$ the same way as previously, by
\begin{equation}
\label{DefU2}
\left\{ \begin{array}{l}
{ 0 \leq U \leq 1, } \\
{U \equiv 1 \text{ in } (\Gamma \times \R^n) \backslash \gamma^{-},} \\
{U \equiv 0 \text{ in } \gamma^{2-}.}
\end{array} \right.
\end{equation}
The function $\Upsilon$ is again introduced by (\ref{DefUpsilon}). As in Section \ref{Sec:BEF},
we define $\pi$ as a continuous affine extension
operator $\overline{\pi}$ from $C^{0}({\mathcal H}_{2d};\R)$ to $C^{0}(\T^{n};\R)$, and which has the same property that
each $C^{\alpha}$-regular function is continuously mapped to a
$C^{\alpha}$-regular function, for any $\alpha \in [0,1]$. Moreover, we manage again in order that for any $f \in C^{0}({\mathcal H}_{2d};\R)$, (\ref{NeutralitedePi}) occurs. The operator $\Pi$ is given by (\ref{RegleGrandPi}). \par
\ \par
Now, given $g \in {\mathcal S}^{\lambda}_{\varepsilon}$, we first define $\phi^g$ by \eqref{defphiPoisson}. \par
%
%
%
Then we introduce $f=\tilde{{\mathcal V}}[g]$ as the solution of the following system:
\begin{equation}
\label{EqLin3bis}
\left\{ \begin{array}{l}
f(0,x,v) = f^{\lambda}_{0} \text{ on } \T^{n} \times \R^{n}, \medskip \\
\partial_{t} f + v.\nabla_{x} f + (F^{\lambda}+\nabla (\phi^{g} - \overline{\varphi}) + \mu {\mathcal Y}(t) n_{H} ).\nabla_{v} f = 0 \text{ in } [0,T] \times [(\T^{n} \times \R^{n}) \backslash \gamma^{-}], \medskip \\
f(t,x,v) = [ 1- \Upsilon(t)] f(t^{-},x,v) + \Upsilon (t) U(x,v) f(t^{-},x,v) \text{ on } [0,T] \times \gamma^{-}.
\end{array} \right.
\end{equation} 
The meaning of this equation is the same one as in Section \ref{Sec:BEF} (and $\mu {\mathcal Y}(t) n_{H}$ plays the same role as ${\mathcal E}$ in Section \ref{Sec:BEF}). Recall that $F^{\lambda}$ was defined in \eqref{ChgtEchelleF}. \par
Then, as for Section \ref{Sec:BEF}, we define
${\mathcal V}[g]$ by
\begin{equation}
\label{DefV2b}
{\mathcal V}[g]:= \overline{f} + \Pi ( f_{|[0,T] \times {\mathcal H}_{2d}
\times \R^{n} \cup [0,T/48] \times \T^n \times \R^n } ) \text{ in } [0,T] \times \T^{n} \times \R^{n}.
\end{equation}
Again, $f_{|[0,T] \times {\mathcal H}_{2d} \times \R^{n} \cup [0,T/48] \times \T^n \times \R^n }$ is $C^{1}$
regular, and, together with the construction of $\Pi$, it will follow
that ${\mathcal V}[g]$ is in $C^{1}([0,T] \times \T^{n} \times \R^{n})$.  \par
\ \par
Considering the form of \eqref{EqLin3bis}, the characteristics that we consider in the sequel are $(X^{g},V^{g})$ associated to $F^{\lambda}+\nabla (\phi^{g} - \overline{\varphi}) + \mu {\mathcal Y}(t) n_{H}$, which coincide with the ones associated to $F^{\lambda}+\nabla \phi^{g}$ outside the control zone, but not necessarily inside. \par
\subsection{Existence of a fixed point}
Now our goal is to prove the following lemma. 
\begin{lem}\label{LemFPbis}
For any small $\varepsilon>0$, there exists $\overline{\lambda}(\varepsilon)>0$ such that for any positive $\lambda <
\overline{\lambda}(\varepsilon)$, the operator ${\mathcal V}$ has a fixed point in ${\mathcal S}^{\lambda}_{\varepsilon}$.
\end{lem}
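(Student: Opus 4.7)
The plan is to apply Schauder's fixed point theorem on $\mathcal{S}^\lambda_\varepsilon$ equipped with the $C^0([0,T] \times \T^n \times \R^n)$ topology, following the scheme of Lemma \ref{Lem:ExistFP} but with $\lambda$ playing a role analogous to the smallness of $f_0$ in the local case. The set $\mathcal{S}^\lambda_\varepsilon$ is plainly convex; the H\"older estimate in \textbf{c.} combined with the weighted $L^\infty$ estimate in \textbf{b.} (with $\gamma > n$) gives uniform equicontinuity and uniform decay in $v$, so Ascoli's theorem produces compactness. The continuity of $\mathcal{V}$ on $\mathcal{S}^\lambda_\varepsilon$ for the $C^0$ topology follows exactly as in Section 3.3: convergence $g_k \to g$ in $\mathcal{S}^\lambda_\varepsilon$ yields $\nabla \phi^{g_k} \to \nabla \phi^g$ uniformly via elliptic regularity, so by Gronwall the characteristics $(X^{g_k}, V^{g_k})$ converge uniformly on compacts to $(X^g, V^g)$; since $\mathrm{dist}(\gamma^+, \gamma^-) > 0$, each characteristic $(X^g, V^g)(\cdot, 0, x, v)$ crosses $\gamma^-$ finitely often in a stable way, so the absorption mechanism passes to the limit.

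The central task is to check $\mathcal{V}(\mathcal{S}^\lambda_\varepsilon) \subset \mathcal{S}^\lambda_\varepsilon$. Condition \textbf{d.} is automatic from \eqref{NeutralitedePi} and the construction of $\Pi$. For condition \textbf{b.}, one invokes the fact that along characteristics of \eqref{EqLin3bis} the value of $|f|$ can only decrease at each crossing of $\gamma^-$ (since $0 \leq U \leq 1$ and $0 \leq \Upsilon \leq 1$), so $|f(t,x,v)| \leq |f^\lambda_0((X,V)(0,t,x,v))|$. Combined with the analogue of Lemma \ref{LemCrucial}, namely $\bigl||v| - |V(t,0,x,v)|\bigr| \leq C(1 + t\|F^\lambda + \nabla\phi^g - \nabla\overline\varphi + \mu\mathcal{Y} n_H\|_\infty)$, which is bounded independently of $\lambda \in (0,1]$ and small $\varepsilon$, this transfers the weight $(1+|v|)^\gamma$ through the flow, giving \textbf{b.} with $c_1$ depending only on $\overline{f}, \varphi$. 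The proof of condition \textbf{c.} proceeds as in Lemma \ref{LemmeRegHV}: between absorption times $\nabla_{x,v} f$ is bounded by Gronwall, and at each crossing the jump satisfies $|\nabla f(t^+,x,v)| \leq |\nabla f(t^-,x,v)| + C|f(t^-,x,v)|$. The number of crossings of a single characteristic in $[0,T]$ is bounded by $C(1 + \max_t |V(t,0,x,v)|) \leq C(1+|v|)$, and the weighted estimate \textbf{b.} absorbs this linear growth (this is where $\gamma > n$ is used, not merely $\gamma > n-1$).

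The main obstacle is verifying condition \textbf{a.}, that is, the $C^{\delta_1}$ estimate on $\int (\mathcal{V}[g] - \overline{f}) \, dv$. The key point, for which we set up the reference solution using the hyperplane structure and Lemma \ref{LemPhi}, is the non-concentration property: the characteristics $(\overline{X}, \overline{V})$ of $\overline{\mathcal{E}}$ (modified inside the control zone) starting at low velocities satisfy $|\overline{X}(t, 0, x, 0) - \overline{X}(t, 0, y, 0)| \geq c|x - y|$, because the reference acceleration $\mu \mathcal{Y}(t) n_H$ is constant in space outside $\mathcal{H}_d$. Choosing $\lambda$ small makes $F^\lambda$, $\nabla\phi^g$ (whose $L^\infty$ norm is controlled via elliptic estimates by $\|\int g\,dv\|_\infty$, hence by $\varepsilon + \|\overline{\rho}\|_\infty$), and $f^\lambda_0$ all small compared to $\mu$, so by Gronwall the perturbed characteristics inherit the non-concentration property with a slightly worse constant. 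This non-concentration, together with the weighted bound \textbf{b.}, produces a uniform $C^{\delta_1}$ bound on $\int_{\R^n} (f - \overline{f})\,dv$ of size $O(\|f^\lambda_0\|)$ (by the standard averaging lemma-type interpolation between \textbf{b.} and \textbf{c.} along the flow). Finally, since $\|f^\lambda_0\|_{C_b^1} + \|(1+|v|)^\gamma f^\lambda_0\|_{C_b^0} \to 0$ as $\lambda \to 0$ by \eqref{ChgtEchelle} and the decay assumption \eqref{DecroissanceInfini2} with $\gamma > n$, we may choose $\overline{\lambda}(\varepsilon)$ small enough so that this resulting bound is $\leq \varepsilon$, giving \textbf{a.} and completing the stability ${\mathcal V}({\mathcal S}^\lambda_\varepsilon) \subset {\mathcal S}^\lambda_\varepsilon$. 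Schauder then yields the fixed point.
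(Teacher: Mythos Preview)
Your treatment of compactness, continuity, and points \textbf{b.}, \textbf{c.}, \textbf{d.} is fine and matches the paper. The gap is in point \textbf{a.}, and it is a real one.

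You claim that $\|f^\lambda_0\|_{C^1_b} + \|(1+|v|)^\gamma f^\lambda_0\|_{C^0_b} \to 0$ as $\lambda \to 0$, and that this is what drives condition \textbf{a.} This is false. From $f^\lambda_0(x,v) = \lambda^{2-n} f_0(x,v/\lambda)$ one computes $\|f^\lambda_0\|_{L^\infty} = \lambda^{2-n}\|f_0\|_{L^\infty}$ and $\|\nabla_v f^\lambda_0\|_{L^\infty} = \lambda^{1-n}\|\nabla_v f_0\|_{L^\infty}$; for $n\geq 2$ these are bounded below or blow up. Likewise $\|(1+|v|)^\gamma f^\lambda_0\|_{L^\infty}$ is of order $\lambda^{2-n}$. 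So the constants on the right of \textbf{b.} and \textbf{c.} do \emph{not} become small, and your ``$O(\|f^\lambda_0\|)$'' bound on $\int(f-\overline f)\,dv$ cannot yield \textbf{a.} this way.

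What the paper actually does is different and more delicate. The non-concentration property is used not merely as a stability statement but to invert $x \mapsto X^g(t,0,x,0)$ and define the velocity field $W^g_t(x) := V^g(t,0,(X^g_t)^{-1}(x),0)$. One then proves the pointwise estimate $|v - W^g_t(x)| \leq K\,|V^g(0,t,x,v)|$ with $K$ independent of $\lambda$. Since $|f(t,x,v)| \leq \lambda^{2-n}\|(1+|v|)^\gamma f_0\|_\infty \big(1 + \lambda^{-1}|V^g(0,t,x,v)|\big)^{-\gamma}$, this becomes a function of $|v - W^g_t(x)|/\lambda$, and integrating in $v$ produces a factor $\lambda^n$; together with $\lambda^{2-n}$ this gives $\|\int f\,dv\|_{L^\infty} \leq C\lambda^2$. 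The $C^{\delta_1}$ bound then comes by interpolating this $O(\lambda^2)$ against the $C^\delta$ norm of $\int f\,dv$, which is \emph{large} (of order $\lambda^{1-n}$); the specific choice of $\delta_1$ is made precisely so that the interpolation yields $O(\lambda)$. The smallness in \textbf{a.} therefore comes from the velocity concentration of $f^\lambda_0$ around $W^g_t(x)$ at scale $\lambda$, not from any smallness of $f^\lambda_0$ itself.
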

\begin{proof}[Proof of Lemma \ref{LemFPbis}]
We prove Lemma \ref{LemFPbis} by checking the assumptions for Schauder's fixed point Theorem on ${\mathcal V}$. We will sometimes forget the
indices and exponents $\varepsilon$ and $\lambda$. \par
\ \\
{\bf 1.} Again, ${\mathcal S}$ is a convex compact subset of
$C^{0}(Q_{T})$. \par
\ \\
{\bf 2.} The continuity of ${\mathcal V}$ can be proven in the same way as in
Section 3. \par
%
%
\ \\
%
%
%
%
{\bf 3.} The difficulty is to check that for $\lambda$ small, one has ${\mathcal V}({\mathcal S}^{\lambda}_{\varepsilon}) \subset {\mathcal S}^{\lambda}_{\varepsilon}$. Accordingly, we have to check the points {\bf a.}, {\bf b.}, {\bf c.} and {\bf d.} for ${\mathcal V}[g]$. \par
That ${\mathcal V}[g]$ satisfies {\bf d.} comes directly from the construction. That $\tilde{{\mathcal V}}[g]$ and consequently ${\mathcal V}[g]$ satisfies estimates as {\bf b.} is not difficult and proven as in Section \ref{Sec:BEF}. In particular Lemma \ref{LemCrucial} is still satisfied. \par
For what concerns point {\bf c.} we have as previously (see also \cite[Lemma 4, p. 370]{OG03})

\begin{lem}
\label{LemmeRegHV2}
For $g \in {\mathcal S}^{\lambda}_\varepsilon$, one has $\tilde{{\mathcal V}}[g] \in
C^{1}(Q_{T} \backslash \Sigma_{T})$, with $\Sigma_{T}:= [0,T] \times \gamma^{-}$.
Moreover, for any $(t,x,v)$ and $(t',x',v')$ in $[0,T]
\times [ \T^{2} \backslash \omega ] \times \R^{2}$, with $|
v-v' | \leq 1$, one has,
\begin{multline}
\label{EstGradV2b}
| \tilde{{\mathcal V}}[g] (t,x,v) - \tilde{{\mathcal V}}[g] (t',x',v') | \leq
C [ \|f_{0}\|_{C_{b}^{1}(\T^{2} \times \R^{2})} + 
\| (1+ |v|)^{\gamma+2} f_0 \|_{L^\infty(\T^{2} \times \R^{2})} ] \\
\times (1+|v|) |(t,x,v)- (t',x',v')|,
\end{multline}
and also
\begin{equation}
\label{EstGradV3b}
| \tilde{{\mathcal V}}[g] (t,x,v) - \tilde{{\mathcal V}}[g] (t,x',v') | \leq
C [ \|f_{0}\|_{C_{b}^{1}(\T^{2} \times \R^{2})} + 
\| (1+ |v|)^{\gamma+2} f_0 \|_{L^\infty(\T^{2} \times \R^{2})} ]
|(x,v)- (x',v')|,
\end{equation}
the constant $C$ being independent from $f_0$.
\end{lem}
The central part is point {\bf a.}, where the smallness of $\lambda$ and the non concentration property of $\overline{\varphi}$ are used.
We begin by a lemma which asserts that the non concentration property is preserved by a small perturbation. Recall that $(X^{g},V^{g})$ are associated to $F^{\lambda}+\nabla (\phi^{g} - \overline{\varphi}) + \mu {\mathcal Y}(t) n_{H}$.
\begin{lem}\label{LemNonFocalization}
There exists $c>0$ such that for any $\lambda$ small enough (in terms of $T$, $\omega$ and $F$),
for any $g \in {\mathcal S}^{\lambda}_{\varepsilon}$, one has
\begin{equation}
\label{XgNonMelangeante}
\forall (x,y) \in (\T^{n} )^2 ,\ \forall t \in [0,T],\ \ c^{-1} |x-y|
\leq |X^{g}(t,0,x,0) - X^{g}(t,0,y,0)|
\leq c |x-y|.
\end{equation}
\end{lem}
\begin{proof}[Proof of Lemma \ref{LemNonFocalization}]
Define $(\overline{X},\overline{V})$ as the characteristics associated to the force $\mu {\mathcal Y}(t) n_{H}$. It is clear that $(\overline{X},\overline{V})$ satisfy the non concentration property:
\begin{equation}
\label{XVBarNonFocalisants}
\forall (x,y) \in (\T^{n} )^2 ,\ \forall t \in [0,T],\ \
 |\overline{X}(t,0,x,0) - \overline{X}(t,0,y,0)|
\geq  |x-y|.
\end{equation}
(This is actually an equality!)
Now, it follows from Gronwall's inequality that for a constant $C$ depending only on $\mu$, ${\mathcal Y}$ and $F$, one has
\begin{equation}
\label{estGronwallglobal}
\| (X^{g},V^{g}) - (\overline{X},\overline{V})\|_{C_{b}^{0}([0,T]^2 \times \T^n \times \R^n)} 
\leq C (\varepsilon + \lambda^{2}).
\end{equation}
One can get a further inequality in the following way (when it is
not explicit, the norm considered is the $L^\infty$ one)
\begin{align*}
\nonumber
\frac{d}{dt^+} & \| \nabla (X^{g},V^{g})(t,s,x,v)  - \nabla (\overline{X},\overline{V})(t,s,x,v) \| \\
& \leq \| \nabla V^{g}(t,s,x,v) - \nabla \overline{V}(t,s,x,v) \| \\
& +\| \nabla_x E_g(t,X^{g}(t,s,x,v)) \nabla X^{g}(t,s,x,v) -
\nabla_x E_{\overline{f}}(t,\overline{X}(t,s,x,v)) 
\nabla \overline{X}(t,s,x,v)\| \\
& + \| \nabla_{x,v} F^{\lambda}(t,X^{g}(t,s,x,v), V^{g}(t,s,x,v)) \nabla (X^{g},V^{g})(t,s,x,v) \\
& \hskip 1cm - \nabla_{x,v} F^{\lambda}(t,\overline{X}(t,s,x,v), \overline{V}(t,s,x,v)) \nabla (\overline{X},\overline{V})(t,s,x,v) \|
\end{align*}
where $\nabla$ stands either for $\nabla_x$ or for $\nabla_v$. Now
the second term is bounded as follows
\begin{equation}
\nonumber
\| \nabla_x E_g(t,X^{g}(t,s,x,v)) \nabla X^{g}(t,s,x,v) -
\nabla_x E_{\overline{f}}(t,\overline{X}(t,s,x,v)) \nabla
\overline{X}(t,s,x,v)\| \leq  A_{1}+A_{2}+A_{3},
\end{equation}
with
\begin{eqnarray}
\nonumber
\left\{ \begin{array}{l}
{A_{1} = \| \nabla_x E_g(t,X^{g}(t,s,x,v)) \nabla X^{g}(t,s,x,v)
- \nabla_x E_{g}(t,X^g(t,s,x,v)) \nabla
\overline{X}(t,s,x,v)\|,} \\
{A_{2}= \| \nabla_x E_{g}(t,X^{g}(t,s,x,v)) \nabla
\overline{X}(t,s,x,v) - \nabla_x E_{\overline{f}}(t,X^g(t,s,x,v))
\nabla \overline{X}(t,s,x,v)\|,} \\
{A_{3}=\| \nabla_x E_{\overline{f}}(t,X^{g}(t,s,x,v)) \nabla
\overline{X}(t,s,x,v) - \nabla_x
E_{\overline{f}}(t,\overline{X}(t,s,x,v)) \nabla
\overline{X}(t,s,x,v)\|.}
\end{array} \right.
\end{eqnarray}
Now
\begin{eqnarray}
\nonumber
A_{1} &\leq& \|\nabla_x E_g\|_{C_{b}^0(\Omega_T)} \| \nabla
X^{g}(t,s,x,v) - \nabla \overline{X}(t,s,x,v) \|_{C_{b}^0([0,T]^2 \times \T^n \times \R^n)}, \\
\nonumber
A_{2} &\leq& \| \nabla_x E_{g} - \nabla_x
E_{\overline{f}}\|_{C_{b}^0(\Omega_T)} \| \nabla
\overline{X}\|_{C_{b}^0([0,T]^2 \times \T^n \times \R^n)}, \\ 
\nonumber
A_{3} &=&0.
\end{eqnarray}
Hence we obtain
\begin{multline*}
\| \nabla_x E_g(t,X^{g}(t,s,x,v)) \nabla X^{g}(t,s,x,v) -
\nabla_x E_{\overline{f}}(t,\overline{X}(t,s,x,v)) \nabla
\overline{X}(t,s,x,v)\| \\ \leq C ( \varepsilon + \| \nabla
X^{g}(t,s,x,v) - \nabla \overline{X}(t,s,x,v) \|_{C_{b}^0([0,T]^2 \times \T^n \times \R^n)}).
\end{multline*}

We treat the term concerning $F^{\lambda}$ in the same way and obtain
\begin{multline*}
\| \nabla_{x,v} F^{\lambda}(t,X^{g}(t,s,x,v), V^{g}(t,s,x,v)) \nabla (X^{g},V^{g})(t,s,x,v) \\
 - \nabla_{x,v} F^{\lambda}(t,\overline{X}(t,s,x,v), \overline{V}(t,s,x,v)) \nabla (\overline{X},\overline{V})(t,s,x,v) \| \\
\leq C (\lambda 
+ \| \nabla (X^{g},V^{g})(t,s,x,v) - \nabla (\overline{X},\overline{V})(t,s,x,v) \|_{C_{b}^0([0,T]^2 \times \T^n \times \R^n)}).
\end{multline*}
It follows then by Gronwall's lemma that for a certain constant $C$,
one has
\begin{equation}
\nonumber
\| (X^{g},V^{g}) - (\overline{X},\overline{V})\|_{L^\infty([0,T];
C_{b}^{1}(\T^n \times \R^n))} \leq C (\varepsilon + \lambda).
\end{equation}
Hence, if $\varepsilon$ and $\lambda$ are small enough, then \eqref{XVBarNonFocalisants} is still valid when replacing
$(\overline{X},\overline{V})$ by $(X^{g},V^{g})$, up to a multiplicative constant. This gives \eqref{XgNonMelangeante}.
\end{proof}

Let us come back to the proof of point {\bf a}. Let us treat the $L^{\infty}$-norm; the $C^{\delta_1}$ one will follow by interpolation.
From (\ref{XgNonMelangeante}), we deduce that
$X^{g}(t,0,\cdot,0):\T^{n} \rightarrow \T^{n}$ is invertible; call
$({X}_{t}^{g})^{-1}$ its inverse, and define the function ${W}_{t}^{g}: [0,T] \times \T^{n} \rightarrow \R^{n}$ by
\begin{equation}
\nonumber
W_{t}^{g}(x):= V^{g}(t,0,({X}_{t}^{g})^{-1}(x),0).
\end{equation} \par
One can describe $({X}_{t}^{g})^{-1}(x)$ as the initial position of a particle, which starting with velocity $0$, reaches $x$ at time $t$; then $W_{t}^{g}(x)$ is its velocity at time $t$. \par
Let us give an estimate on $v - W_{t}^{g}(x)$. 
First,
\begin{equation*}
v- W^{g}_{t}(x) = V^{g}(0,t, X^{g}(t,0,x,v),V^{g}(t,0,x,v)) - V^{g}(t,0,({X}_{t}^{g})^{-1}(x),0).
\end{equation*}
By using Gronwall's lemma on $V(0,t,\cdot,\cdot)$, we deduce that for some constant independent of $\lambda \in (0,1]$
\begin{equation*}
|v- W^{g}_{t}(x)| \leq C\left( | X^{g}(t,0,x,v) - ({X}_{t}^{g})^{-1}(x)| + |V^{g}(t,0,x,v)|\right).
\end{equation*}
That the constant is independent of $\lambda$ comes from the fact that we have uniform Lipschitz estimates on $F^{\lambda}+\nabla (\phi^{g} - \overline{\varphi}) + \mu {\mathcal Y}(t) n_{H}$ for $\lambda \in (0,1]$. \par
To estimate the first term, we first notice that the non-concentration property \eqref{XgNonMelangeante} gives
\begin{eqnarray*}
(c')^{-1} | (X^{g}_{t})^{-1}(x) - X^{g}(0,t,x,v) | 
&\leq& | X^{g}(t,0,(X^{g}_{t})^{-1}(x),0) - X^{g}(t,0,X^{g}(0,t,x,v),0) | \\
&=& |x -X^{g}(t,0,X^{g}(0,t,x,v),0) | \\
&=& |X^{g}(t,0,X^{g}(0,t,x,v),V^{g}(0,t,x,v))  -X^{g}(t,0,X^{g}(0,t,x,v),0) |
\end{eqnarray*}
where the first equality comes from the definition of $(X_{t}^{g})^{-1}$, and the second one of the flow property. \par
Now Gronwall's lemma for $X^{g}(t,0,\cdot,\cdot)$, we deduce that for some constant $C>0$ independent of $\lambda \in (0,1]$ one has
\begin{equation*}
|X^{g}(t,0,X^{g}(0,t,x,v),V^{g}(0,t,x,v))  -X^{g}(t,0,X^{g}(0,t,x,v),0) | \leq C |V^{g}(0,t,x,v)|.
\end{equation*}
Finally we deduce that for some constant $K > 0$ independent of $\lambda$, one has, for any
$\lambda \in (0,1]$ and any $g \in {\mathcal S}^{\lambda}_{\varepsilon}$,
\begin{equation} \label{PropagationVitesses}
|v- W^{g}_{t}(x)| \leq K|V^{g}(0,t,x,v)|.
\end{equation}
Now, one has
\begin{eqnarray}
\nonumber 
|f(t,x,v)| &\leq& | f_{0}^{\lambda} \left[
(X^{g},V^{g})(0,t,x,v) \right] | \\
\nonumber
& \leq& \lambda^{2-n} 
\|f_{0} (1+|v|)^{\gamma}\|_{L^{\infty}(\T^{n} \times \R^{n})}
\left( 1+\frac{1}{\lambda} \left| V^{g}(0,t,x,v) \right| \right)^{-\gamma}.
\end{eqnarray} 
Using (\ref{PropagationVitesses}), we get that
\begin{equation}
\nonumber 
|f(t,x,v)| \leq \lambda^{2-n} \|f_{0}
(1+|v|)^{\gamma}\|_{L^{\infty}(\T^{n} \times \R^{n})}
\left( 1+\frac{1}{K \lambda} |v - W_{t}(x)|
\right)^{-\gamma}.
\end{equation}
It follows that
\begin{equation}
\nonumber 
|\int_{\R^{n}} f(t,x,v) dv| \leq 
\lambda^{2-n} \|f_{0} (1+|v|)^{\gamma}\|
_{L^{\infty}(\T^{n} \times \R^{n})}
\int_{\R^{n}} \left(
1+\frac{1}{K \lambda} |v - W_{t}^{g}(x)| \right)^{-\gamma} dv.
\end{equation}
We deduce that
\begin{equation}
\nonumber
|\int_{\R^{n}} \tilde{{\mathcal V}}[g] (t,x,v) dv | \leq \kappa
\lambda^{2-n} \|f_{0} (1+|v|)^{\gamma}\|_{L^{\infty}(\T^{n} \times
\R^{n})} K^n \lambda^{n}.
\end{equation}
One deduces from the construction of ${\mathcal V}$ that
\begin{equation}
\label{EstDensiteLinfini2}
\| \int ({\mathcal V}[g]-\overline{f})(t,x,v) dv \|_{L^{\infty}(\Omega_{T})}
\leq C \lambda^{2-n} \|f_{0}(1+|v|)^{\gamma}\|_{L^{\infty}(\T^{n}
\times \R^{n})} \lambda^{n} \leq C(f_0) \lambda^{2}.
\end{equation}
Now we turn to the H\"older estimate. It follows by interpolation
between points {\bf b} and {\bf c}, that for a certain constant $C$
independent from $\lambda$, and for
$\tilde{\gamma}=\frac{n+\gamma}{2}$ and $\delta = \gamma/(\gamma+1)$
one has
\begin{equation}
\nonumber
| {\mathcal V}[g] - \overline{f}|^{\tilde{\gamma}}_{\delta} \leq C \left[ \II
\|f^{\lambda}_{0}\|_{C_{b}^{1}(\T^{n} \times \R^{n})}
+\|(1+|v|)^{\gamma}f^{\lambda}_{0}\|_{C^{0}(\T^{n} \times \R^{n})}
\right].
\end{equation}
We deduce that, for $\lambda \leq 1$ and another constant $C$
(depending on $f_0$ but not on $\lambda$),
\begin{equation}
\nonumber 
\| \int ({\mathcal V}[g] - \overline{f}) dv
\|_{C^{\delta}(\Omega_{T})} \leq C \lambda^{1-n}.
\end{equation}
Now we interpolate again this inequality with (\ref{EstDensiteLinfini2}).
We get that for $\delta_1$, one has
\begin{equation}
\nonumber 
\| \int  ({\mathcal V}[g] - \overline{f}) dv
\|_{C^{\delta_1}(\Omega_{T})} \leq K' \lambda.
\end{equation}
which concludes the point {\bf a}, for it is sufficient to find a
proper $\lambda$.
This finally proves ${\mathcal V}({\mathcal S}^{\lambda}_{\varepsilon}) \subset {\mathcal S}^{\lambda}_{\varepsilon}$. \par
\end{proof}
\subsection{A fixed point is relevant}
Now we can prove that the characteristics associated to the fixed point are relevant:
\begin{lem} \label{LemCaracRencontreGlobal}
There exists $\epsilon_1>0$ such that for any $0<\epsilon<\epsilon_1$, all the characteristics $(X,V)$ meet $\gamma^{2-}$ for some time in $[\frac{T}{24},\frac{23T}{24}]$.
\end{lem}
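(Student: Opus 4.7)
The plan is to compare the characteristics $(X,V)$ associated to the fixed point (that is, to the force $F^\lambda+\nabla(\phi^f-\overline{\varphi})+\mu\mathcal{Y}(t)n_H$) with the characteristics $(\overline{X},\overline{V})$ associated to the pure reference force $\mu\mathcal{Y}(t)n_H$, for which Lemma \ref{LemExistMu} already provides the desired hitting property on $\gamma^{3-}$ for some time in $[T/6,5T/6]\subset[T/24,23T/24]$. The whole argument is then a perturbation/continuity argument, exploiting the margin between $\gamma^{3-}$ and $\gamma^{2-}$.

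First, I would observe that on the one hand $\|F^\lambda\|_{L^\infty}\le \lambda^2\|F\|_{L^\infty}$ by \eqref{ChgtEchelleF}, and on the other hand, for any $g\in\mathcal{S}^\lambda_\varepsilon$, point \textbf{a.} in \eqref{DefS2} combined with the elliptic estimate applied to \eqref{defphiPoisson} and to the Poisson equation for $\overline{\varphi}$ yields $\|\nabla(\phi^g-\overline{\varphi})\|_{L^\infty(\Omega_T)}\le C\varepsilon$. Thus, exactly as in the derivation of \eqref{estGronwallglobal}, Gronwall's lemma gives
\begin{equation*}
\|(X,V)-(\overline{X},\overline{V})\|_{C^0_b([0,T]^2\times\T^n\times\R^n)}\le C(\varepsilon+\lambda^2),
\end{equation*}
with $C$ depending only on $\mu$, $\mathcal{Y}$, $F$ and $T$.

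Next, fix $(x,v)\in\T^n\times\R^n$ and let $t_0\in[T/6,5T/6]$ be the time furnished by Lemma \ref{LemExistMu} at which $(\overline{X}(t_0,0,x,v),\overline{V}(t_0,0,x,v))\in\gamma^{3-}$, so that $\overline{X}(t_0,0,x,v)\in\Gamma$ with $|\overline{V}(t_0,0,x,v)|\ge 2$ and $\overline{V}(t_0,0,x,v)\cdot\nu\le-2$. Since $\overline{X}$ crosses $\Gamma$ transversally at $t_0$ with normal speed at least $2$, and the forces acting on both trajectories are uniformly bounded (in $\lambda\in(0,1]$), for $\varepsilon+\lambda^2$ small enough the function $t\mapsto (X(t,0,x,v)-x_\Gamma)\cdot\nu$ (where $x_\Gamma$ is a reference point on the nearby component of $\Gamma$) still changes sign in a small neighbourhood of $t_0$. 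Hence there exists $t^\ast$ close to $t_0$ with $X(t^\ast,0,x,v)\in\Gamma$, and
\begin{equation*}
|V(t^\ast,0,x,v)-\overline{V}(t_0,0,x,v)|\le C'(\varepsilon+\lambda^2)+C''|t^\ast-t_0|,
\end{equation*}
both of which can be made arbitrarily small.

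Finally, since $\gamma^{3-}$ sits strictly inside $\gamma^{2-}$ in the sense that its defining inequalities ($|v|\ge 2$, $v\cdot\nu\le-2$) are strictly more stringent than those of $\gamma^{2-}$ ($|v|\ge 1$, $v\cdot\nu\le-3/2$), there exists $\eta>0$ (depending only on the geometry) such that any $(y,w)\in\Gamma\times\R^n$ with $\mathrm{dist}((y,w),\gamma^{3-})<\eta$ lies in $\gamma^{2-}$. Choosing $\varepsilon_1$ and then $\overline{\lambda}(\varepsilon)$ small enough so that $C(\varepsilon+\lambda^2)$ together with the time shift $|t^\ast-t_0|$ is below $\eta$ and so that $t^\ast$ stays in $[T/24,23T/24]$, one obtains $(X(t^\ast,0,x,v),V(t^\ast,0,x,v))\in\gamma^{2-}$, which is the claim.

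The main obstacle I expect is the careful handling of the \emph{crossing time} $t^\ast$: the perturbed trajectory does not a priori hit $\Gamma$ at the same time as $(\overline{X},\overline{V})$, so one needs the transversality $\overline{V}(t_0,0,x,v)\cdot\nu\le-2$ (which is precisely why Lemma \ref{LemExistMu} was stated with $\gamma^{3-}$ rather than with $\gamma^{2-}$) to apply a quantitative implicit function / intermediate value argument. Once this transversal crossing is secured, the remaining inclusion in $\gamma^{2-}$ reduces to the Gronwall comparison above.
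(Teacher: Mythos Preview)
Your proof is correct and follows essentially the same approach as the paper: compare $(X,V)$ with the reference characteristics $(\overline{X},\overline{V})$ via the Gronwall estimate \eqref{estGronwallglobal} (using $\|F^\lambda\|_{L^\infty}\le\lambda^2\|F\|_{L^\infty}$ and point~\textbf{a.}), invoke Lemma~\ref{LemExistMu} to place the reference trajectory in $\gamma^{3-}$ at some time in $[T/6,5T/6]$, and then exploit the transversal crossing and the margin $\gamma^{3-}\subset\gamma^{2-}$ to conclude. The paper simply abbreviates the crossing argument by pointing back to the proof of Lemma~\ref{LemCaracRencontre}, whereas you have spelled it out explicitly.
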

\begin{proof}[Proof of Lemma \ref{LemCaracRencontreGlobal}]
We recall that by the scaling $F^\lambda=\lambda^2 F(\lambda t, x, \frac{v}{\lambda})$, so that $\| F^\lambda \|_{L^\infty_{t,x,v}} \leq \lambda^2 \| F \|_{L^\infty_{t,x,v}}$. As for Lemma \ref{LemCaracRencontre}, the proof follows, recalling the Gronwall's estimate \eqref{estGronwallglobal}, and the fact that the characteristics associated to the reference solution $\overline{f}$ meet $\gamma^{3-} \times [\frac{T}{6},\frac{5T}{6}]$. (We recall that $\mu$ was defined when we have constructed the reference solution $\overline{f}$.)
\end{proof}
Finally, we can conclude the proof of the theorem.
\begin{proof}[Proof of Theorem \ref{Theo:BoundedGlobal}]
Using Lemma \ref{LemFPbis}, we deduce the existence of some fixed point $g$ for $\lambda$ sufficiently small. Using Lemma \ref{LemCaracRencontreGlobal}, and  \eqref{DefUpsilon}, \eqref{DefU2} and \eqref{EqLin3bis}, we see that it satisfies $\mbox{Supp\,}[g(T,\cdot,\cdot)] \subset \omega \times \R^{2}$ . Now, we define $f(t,x,v)=g(\frac{t}{\lambda},x,\lambda v)$, which satisfies the conclusions of Theorem \ref{Theo:BoundedGlobal}. The fact that \eqref{Vlasov} is satisfied for some $G$ supported in $\omega$ is done as in Section \ref{Sec:BEF}.
\end{proof}
\section{External magnetic field case}
\label{Sec:CM}
In this section, we prove Theorem \ref{Theo:Mag}, that is the local controllability result for the external magnetic field case.
\subsection{Rephrasing the geometric assumption}
We begin by transforming the geometric assumption \eqref{GeometricCondition} in a way that is easier to handle in the sequel. 
For $K$ a compact subset of $\T^{2}$ and $r>0$ we denote
\begin{equation} \label{Kr}
K_{r} := \{ x \in \T^{2} \ / \ d(x,K) \leq r \}.
\end{equation}

The geometric assumption can be reinterpreted with the help of the folllowing lemma.
\begin{lem} \label{LemEpaissi}
Let $K \subset \T^{2}$ such that $b >0$ on $K$ and satisfying \eqref{GeometricCondition}.
Then there exists $\underline{b}>0$, $d>0$ and $D>0$ such that
\begin{equation} \label{Petitd}
b \geq \underline{b} \ \text{ on } K_{2d},
\end{equation}
\begin{equation} \label{GrandD}
\forall x \in \T^{2}, \ \forall e \in \S^{1}, \ \exists t \in [0,D],
\ \forall s \in \left[t,t+ \frac{d}{2}\right], \  x + se \in K_{d}.
\end{equation}
\end{lem}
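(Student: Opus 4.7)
The plan is to treat the two conclusions independently, both by compactness arguments, with $d$ chosen small enough at the first step to accommodate both.

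For \eqref{Petitd}: since $b$ is Lipschitz on $\T^2$ and strictly positive on the compact set $K$, it attains a positive minimum $b_0 := \min_K b > 0$. Calling $L$ the Lipschitz constant of $b$, any $x \in K_{2d}$ is at distance $\leq 2d$ from some point of $K$ where $b\geq b_0$, so $b(x) \geq b_0 - 2Ld$; hence the choice $d \leq b_0/(4L)$ and $\underline{b} := b_0/2$ works. This fixes $d$ and $\underline{b}$, and only a compatible $D$ remains to be found.

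For \eqref{GrandD}, I would first establish the property on a neighborhood in $(x,e)$ and then conclude by compactness of $\T^2 \times \S^1$. Fix $(x_0, e_0) \in \T^2 \times \S^1$: by \eqref{GeometricCondition}, there is $y_0 \geq 0$ with $x_0 + y_0 e_0 \in K$, and I set $t_0(x_0, e_0) := \max(y_0 - d/4, 0)$. A direct check shows that for every $s \in [t_0, t_0 + d/2]$ one has $|s - y_0| \leq d/2$, so that $x_0 + se_0 \in K_{d/2}$. By joint continuity of $(x,e,s) \mapsto x + se$ and compactness of $[t_0, t_0 + d/2]$, there exists an open neighborhood $U_{(x_0, e_0)}$ of $(x_0, e_0)$ in $\T^2 \times \S^1$ on which $|x + se - x_0 - se_0| < d/2$ holds uniformly for $s \in [t_0, t_0 + d/2]$, and therefore $x + se \in K_d$ for all $(x, e) \in U_{(x_0, e_0)}$ and all such $s$, with the common choice $t = t_0(x_0, e_0)$.

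Extracting from $\{U_{(x_0, e_0)}\}$ a finite subcover of $\T^2 \times \S^1$ and defining $D$ as the maximum of the finitely many associated values $t_0$ yields the required uniform bound. I do not expect a genuine obstacle; the key observation is that \eqref{GeometricCondition} only asserts a closed condition (the ray hits $K$ exactly), so one must first convert it to an open one by exploiting the thickening to $K_d$, after which local continuity plus the compactness of $\T^2 \times \S^1$ provides the constant $D$ automatically.
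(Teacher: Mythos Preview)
Your proof is correct and follows essentially the same approach as the paper: a compactness argument on $K$ for \eqref{Petitd} and a compactness argument on $\T^2\times\S^1$ for \eqref{GrandD}, after converting the closed hitting condition into an open one via the thickening $K_d$. The only cosmetic difference is that the paper first extracts a uniform $D$ for hitting $K_{d/2}$ and then deduces the $d/2$-interval from $\mathrm{dist}(K_{d/2},\T^2\setminus K_d)\geq d/2$, whereas you build the interval $[t_0,t_0+d/2]\subset K_{d/2}$ directly at each point before passing to the cover; both routes are equivalent.
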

\begin{proof}[Proof of Lemma \ref{LemEpaissi}]
An easy argument relying on the compactness of $K$ shows that for $d>0$ suitably small, one has \eqref{Petitd}. \par
To prove \eqref{GrandD}, we use the compactness of $\T^{2} \times \S^{1}$. For any $(x,e) \in \T^{2} \times \S^{1}$, there exists $t \in \R^{+}$ such that $x+te \in K$. One deduces that for $(x',e')$ in an open neighborhood of $(x,e)$ in $\T^{2} \times \S^{1}$, one has $x'+te' \in K_{d/2}$. \par
Hence by compactness of $\T^{2} \times \S^{1}$, there exists a maximal time $D$ such that for any $(x,e) \in \T^{2} \times \S^{1}$, there exists $t \in [0,D]$ for which $x+te \in K_{d/2}$. Now if $x+te \in K_{d/2}$ and $x+t'e \notin K_{d}$, then one has $|t-t'| \geq d/2$, since $\mbox{dist} (K_{d/2}, \T^{2} \setminus K_{d}) \geq d/2$. The conclusion \eqref{GrandD} follows.
\end{proof}
\ \par
\subsection{Design of the reference solution}
The first step consists in building the reference solution, once again distinguishing between high and low velocities.
We first treat the case of large velocities. We prove that with the geometric assumption on $b$, high velocity particles spontaneously reach the arbitrary open set. One can observe that this is very different to the case of bounded force fields.
Actually we can prove a stronger result than announced, since we can add to the Lorentz force any additional bounded force field. Such a generalization will be actually crucial for the proof of Lemma \ref{LemCaracRencontre2}.
\begin{prop} \label{PropSolRefCM}
Let $T>0$ and $r_{0}>0$. Let $b$ satisfy the geometric condition \eqref{GeometricCondition}. There exists $\underline{m} \in \mathbb{R}^{+*}$ large enough depending only on $b$, $T$ and $\omega$ such that for all $\mathfrak{F} \in L^{\infty}(0,T;W^{1,\infty}(\T^{2} \times \R^{2}))$ satisfying $\| \mathfrak{F} \|_{L^{\infty}} \leq 1$, the characteristics $(\overline{X}, \overline{V})$ associated to $b(x) v^\perp +\mathfrak{F}$ satisfy:
\begin{multline} \label{GVCM}
\forall x \in \mathbb{T}^2, \forall v \in \mathbb{R} ^2 \text{ such that } \vert v \vert \geq \underline{m},
\exists t \in (T/4,3T/4), \ \overline{X}(t,0,x,v) \in B(x_0,r_0/2) \\
 \text{ and for all } s \in [0,T], \ \ \frac{|v|}{2} \leq |\overline{V}(s,0,x,v)| \leq 2|v|.
\end{multline}
\end{prop}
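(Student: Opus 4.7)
The plan separates the two conclusions of~\eqref{GVCM}.

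\textbf{Velocity bound.} Since $b(X)V^\perp\perp V$, one computes $\frac{d}{ds}|V|^2 = 2\,V\cdot\mathfrak{F}$, and since $\|\mathfrak{F}\|_\infty\leq 1$ a direct Gronwall argument yields $\bigl||V(s)|-|v|\bigr|\leq T$ on $[0,T]$. Choosing $\underline{m}\geq 4T$ then gives $|V(s)|\in[|v|/2,2|v|]$ throughout $[0,T]$, which is the easy half of the proposition.

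\textbf{Position statement: low curvature and passages in $K_{2d}$.} I would parametrize by arclength $\xi = \int_0^s|V|\,d\sigma$ and write $\eta = V/|V|\in\S^1$. Using once more $V\perp V^\perp$ gives $|d\eta/d\xi|\leq b(X)/|V|+\|\mathfrak{F}\|_\infty/|V|^2 \leq C/|v|$, so the curve $\xi\mapsto X(\xi)$ has curvature $O(1/|v|)$ and, on any arclength window of length $L$, differs from its tangent line at the base point by $O(L^2/|v|)$. Applying Lemma~\ref{LemEpaissi} to this tangent line with $L=D+d/2$: the tangent line enters $K_d$ during an arclength sub-interval of length $d/2$, and once $\underline{m}$ is chosen so that $C(D+d/2)^2/|v|\leq d/2$, the actual trajectory lies in $K_{2d}$ (where $b\geq\underline{b}$) on a comparable sub-interval. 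Each such passage rotates $\eta$ by at least $\underline{b}(d/2)/(2|v|)$; summing over the $\sim |v|T/(4(D+d/2))$ such windows that fit inside the arclength $|v|T/4$ covered on $[T/4,3T/4]$, the total rotation of $\eta$ is bounded below by a positive constant $c(b,T)>0$ independent of $|v|$.

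\textbf{Main obstacle.} What remains, and is the hardest part, is to upgrade the above into the actual visit $\overline{X}(s)\in B(x_0,r_0/2)$ for some $s\in(T/4,3T/4)$. My plan is a compactness-contradiction argument: if the conclusion fails along a sequence $|v_n|\to\infty$, $x_n\in\T^2$, and $\mathfrak{F}_n$ with $\|\mathfrak{F}_n\|_\infty\leq 1$, then one tracks the direction $\eta_n(s)$ as a Lipschitz function of physical time $s\in[0,T]$ (with uniform Lipschitz constant $\|b\|_\infty+1$) and extracts an Ascoli limit $\eta_\infty(s)$ whose total variation on $[T/4,3T/4]$ is bounded below by $c(b,T)>0$ by the previous step. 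Combining this nontrivial direction variation of the limit with the geometric recurrence~\eqref{GrandD} of Lemma~\ref{LemEpaissi}, one should force the limit trajectory to point, at some moment of $(T/4,3T/4)$, toward $B(x_0,r_0/2)$ from a close enough location to actually enter the ball. The main technical difficulty is controlling simultaneously the two scales involved—a fast oscillation of the position on $\T^2$ (arclength $\sim |v_n|T$) and the slow variation of the direction (total $c(b,T)$ over the same physical time)—and ruling out directions that are resonant with the integer lattice $\Z^2$, which could otherwise allow a long nearly-straight curve on $\T^2$ to avoid a fixed open ball. A Gauss-circle-type quantitative recurrence estimate for linear flows on $\T^2$ will likely be needed to close this last argument.
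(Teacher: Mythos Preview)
Your velocity bound and your curvature/rotation estimates are essentially the same as the paper's, and they are correct.

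The gap is in your ``Main obstacle'' paragraph. Your compactness-contradiction scheme is not a proof as written: you extract a limit direction $\eta_\infty$ but no limit position (indeed there can be none, since $X_n$ covers arclength $\sim |v_n|T\to\infty$), and you then need to convert ``$\eta_\infty$ has positive total variation'' into ``some nearby nearly-straight arc of length $\sim L$ enters $B(x_0,r_0/2)$''. You correctly identify the obstruction---resonant (rational-slope) directions---but you defer it to an unspecified ``Gauss-circle-type quantitative recurrence estimate''. That is precisely the point where the argument has to be made, and you have not made it.

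The paper bypasses this difficulty by a much more elementary and direct device. On $\T^2$, the set of directions $e\in\S^1$ for which some straight half-line $\{x+se:s\geq 0\}$ misses $B(x_0,r_0/8)$ is \emph{finite}: irrational slopes give dense orbits, and a rational slope $p/q$ with $\gcd(p,q)=1$ gives closed geodesics whose mutual spacing is at most $\min(1/|p|,1/q)$, so only finitely many $p/q$ can avoid a fixed ball. Call these bad directions $\alpha_1,\dots,\alpha_N$ and fix small disjoint arcs ${\mathcal V}_i$ around them. Outside $\cup_i{\mathcal V}_i$, a compactness argument gives a uniform length $L$ after which every straight line meets $B(x_0,r_0/8)$; your curvature estimate then shows the true trajectory enters $B(x_0,r_0/4)$ within time $L/|v|$, for $|v|$ large. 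If instead $V/|V|$ lies in some ${\mathcal V}_j$, one simply waits: your rotation lower bound (from the passages in $K_{2d}$) moves the direction out of ${\mathcal V}_j$ in a fixed physical time, and one is back in the first case. This two-case dichotomy replaces your compactness-contradiction step entirely and closes the proof without any further Diophantine input.
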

\begin{proof}[Proof of Proposition \ref{PropSolRefCM}]
We prove Proposition \ref{PropSolRefCM} in several cases of increasing complexity. In a first time (Cases 1--3), we suppose that $\mathfrak{F}=0$. In Case 4, we explain how to take $\mathfrak{F}$ into account. \par
In all cases, we define
\begin{equation} \label{DefOvelineEta}
\overline{b}:= \max_{x \in \T^{2}} b(x).
\end{equation}
\ \par
\noindent
{\bf 1.} \emph{An enlightening case: constant magnetic field modulus.}
Let us first suppose $b$ constant; for readability we assume here that $b(x):=1$.

As noticed in  \cite[Appendix A, p. 373-374]{OG03}, there are only a finite number of direction in $\mathbb{S}^1$ (identifying $\mathbb{S}^1$ with $[0,2\pi[$, we denote them $\alpha_1,...,\alpha_N \in [0,2\pi[$) for which there exists a half-line in $\mathbb{T}^2$ which does not intersect $B(x_0,r_0/8)$. 
Indeed if the slope is irrational, then each corresponding half-line is dense in the torus, and consequently meets $B(x_0,r_0/8)$. If the slope is rational, say $p/q$ with $p \in \Z$, $q \in \N \setminus \{0\}$ and $\mbox{gcd}(p,q)=1$, then these half-lines $L$ are closed periodic lines in $\T^{2}$.
Due to B\'ezout's theorem, the distance between to consecutive lines in $s^{-1}(L)$ is less than $\min(\frac{1}{|p|}, \frac{1}{q})$, and the conclusion follows. \par
\ \par 
We introduce the neighborhoods of $\alpha_{i}$:
\begin{equation*}
\mathcal{V}_i=(\alpha_i- \beta_i/2, \alpha_i + \beta_i/2),
\end{equation*}
as follows. Let $\beta_i>0$ and $\tau \leq T$ small enough such that
\begin{equation*}
\beta_i < \frac{\tau}{4}
\text{ and }
\frac{\tau}{4} <\min_{i\neq j} d(\mathcal{V}_i,\mathcal{V}_j).
\end{equation*}
By a compactness argument, there exists a length $L>0$ such that for any $x \in \mathbb{T}^2$, $\forall a_i \in \mathbb{S}^1\backslash \cup_{i=1}^N \mathcal{V}_i$, any particle starting from $x$ with a direction $a_i$ has to travel at most a distance $L$ to meet $B(x_0,r_0/8)$.

We fix $m$ large enough such that:
\begin{equation*}
T_{m}:=\frac{L}{m}<\tau/4.
\end{equation*}
This is the time ``free'' particles with velocity $m$ take to cover the distance $L$. We observe that for any $\vert v \vert \geq m$, we have $T_{\vert v \vert} :=  \frac{L}{\vert v \vert}\leq T_m$.

Now let $x\in \mathbb{T}^2, v \in \mathbb{R}^2$ with $|v| \geq {m}$. Let us discuss according to the direction of $v$.

\begin{itemize}
\item First case : $\frac{v}{\vert v \vert} \in  \mathbb{S}^1\backslash \cup_{i=1}^N \mathcal{V}_i$.

We denote $(X^{\#},V^{\#})$ the characteristics associated to free transport.

We have, for any $t<T_{\vert v \vert}$, 
\[
\vert X^{\#}(t+T/4,T/4,x,v)- \overline{X}(t+T/4,T/4,x,v)\vert \leq \vert v \vert \frac{T_{\vert v \vert}^2}{2}= \frac{L^2}{2\vert v \vert}  \leq  \frac{L^2}{2m}.
\]
We can impose $m$ large enough such that $\frac{L^2}{2m} < r_0/8$. As a result:
$$\exists t \in (T/4,T/2], \overline{X}(t,0,x,v) \in B(x_0,r_0/4),$$
and \eqref{GVCM} is trivial here since $|\overline{V}(t,0,x,v)|$ is conserved.

\item Second case :  $\frac{v}{\vert v \vert} \in  \cup_{i=1}^N \mathcal{V}_i$, say $\mathcal{V}_j$.

The idea is to simply wait for a time $\tau/4$. Let us consider 
\begin{equation*}
(x',v'):=(\overline{X} ((T+\tau)/4,T/4,x,v),\overline{V} ((T+\tau)/4,T/4,x,v)).
\end{equation*}
We observe that because of the ``rotation'' induced by the magnetic field and due to the choice of $\beta_{i}$, 
\[
\frac{v'}{\vert v' \vert} \in  \mathbb{S}^1\backslash \cup_{i=1}^N \mathcal{V}_i,
\]
and thus we are in the same case as before. 
\end{itemize}
Consequently we have proven that:
$$\exists t \in (T/4,3T/4], \overline{X}(t,0,x,v) \in B(x_0,r_0/4).$$

\ \\
{\bf 2.} \emph{Positive magnetic field modulus.} Here we suppose that $b>0$ on $\T^{2}$.

We are in the case where in Lemma \ref{LemEpaissi}, we can take $K=K_{d}=\T^{2}$ and 
\begin{equation*}
\underline{b} =\inf_{x \in \mathbb{T}^2} b.
\end{equation*}
Keeping the same notations as before, we set $\tau \in (0,T]$ and $\beta_{i}>0$ in order that

\begin{equation*}
\beta_i < {\underline{b}} \frac{\tau}{4} < \min_{i\neq j} d(\mathcal{V}_i,\mathcal{V}_j).
\end{equation*}

 The proof is very similar to the previous one. Indeed, the following estimate still holds:
\begin{equation} \label{56b}
\vert X^{\#}(t,T/4,x,v)- \overline{X}(t,T/4,x,v)\vert \leq  \frac{L^2}{2m} \overline{b}.
\end{equation}

 Let $\tilde{x}\in \mathbb{T}^2, \tilde{v} \in \mathbb{R}^2$. We distinguish as before between two possibilities.
 Using the previous inequality \eqref{56b}, the first case holds identically for $m$ large.
 For the second case just have to check that with this magnetic field, the velocity is rotated by an angle at least equal to $\beta_i$ after some time $t \in (0, \frac{\tau}{4})$. \par
We use the following computation for general $(x,v)$. Denote by $\theta(t)$ the angle (modulo $2\pi$) between $v^{\perp}$ and $\overline{V}(t,0,x,v)$. Taking the scalar product with $\overline{V}(t,0,x,v)$ in:
 \[
 \frac{d \overline{V}(t,0,x,v)}{dt}= b(\overline{X}(t,0,x,v)) \overline{V}(t,0,x,v)^\perp,
 \]
 we obtain that $\vert \overline{V}(t,0,x,v)\vert=\vert v \vert$. Then, taking the scalar product with $v^{\perp}$, we obtain:
\begin{equation} \nonumber
\sin \theta(t) \theta'(t) = b(\overline{X}(t,0,x,v)) \sin \theta(t),
\end{equation}
so that
\begin{equation} \label{Thetaprime}
\theta'(t) = b(\overline{X}(t,0,x,v)),
\end{equation}
(even if $\sin \theta(t)=0$ in which case one considers the scalar product with $v$.)
We deduce that $\theta'(t) \geq \underline{b}$. \par
Thus going back to $(\tilde{x},\tilde{v})$, by the intermediate value theorem and the definition of the neighborhoods ${\mathcal V}_{i}$, there is a time $T_0$ less or equal to $\tau/4$ for which we have:
 \[
 \overline{V}(T_0 + \frac{T}{4},\frac{T}{4},\tilde{x},\tilde{v}) \in   \mathbb{S}^1\backslash \cup_{i=1}^N \mathcal{V}_i,
 \]
and we conclude as previously.

\ \\
{\bf 3.} \emph{Magnetic field modulus satisfying the geometric condition.} Let us consider the general case for $b$, but without the additional force $\mathfrak{F}$. \par
Given $K$ satisfying the geometric condition \eqref{GeometricCondition}, we introduce $d$ and $D$ as in Lemma \ref{LemEpaissi}. Let 
\begin{equation*}
U:=\T^{2} \setminus K_{d},
\end{equation*}
where we recall the notation \eqref{Kr}. We assume here that $\tau \in (0,T]$ and $\beta_{i}$ are such that
\begin{equation*}
\beta_i < \frac{\underline{b}}{2} \, \inf (\frac{\tau}{4},  \frac{\tau d}{32D}) <\min_{i\neq j} d(\mathcal{V}_i,\mathcal{V}_j).
\end{equation*}
We denote by $(X^{\#},V^{\#})$ the characteristics associated to free transport, while $(\overline{X},\overline{V})$ corresponds to those associated to the magnetic field. \par
\ \par
Let $x\in \mathbb{T}^2, v \in \mathbb{R}^2$. We once again distinguish between the two possibilities.
As before the first case is still similar since \eqref{56b} is still valid. We have to give a new argument for the second case. \par
\ \par
We will assume that $m$ is large enough so that $T_{m} < \frac{\tau}{8}$. We distinguish between several sub-cases:

\begin{enumerate}
\renewcommand{\labelenumi}{\alph{enumi}.}
\item Assume that $\overline{X}(t,0,x,v) \in K_{d}$ for some $t$ in a time interval of length at least equal to $\frac{T}{4}$ inside $[\frac{T}{4}, \frac{3T}{4}]$. Then one can apply the positive magnetic modulus case (case 2).

\item Assume more generally that ${\mathcal L}^{1}( \{ t \in [\frac{T}{4}, \frac{3T}{4}], \overline{X}(t,0,x,v) \in K_{d} \}) \geq T/4$. On $U$, one has $b \geq 0$, so the angle of $V(t,0,x,v)$ with $v$ is non decreasing over time. It follows that we can apply \eqref{Thetaprime} to each passage of the particle in $K_{d}$ and we conclude as before.

\item We assume now that the previous cases do not hold. Then $\overline{X}(t,0,x,v)$ remains in $\mathbb{T}^2 \backslash K_{d}$ at least during a time $\frac{T}{4}$ in $(\frac{T}{4},\frac{3T}{4})$. \par
By \eqref{GrandD}, each passage in $\mathbb{T}^2 \backslash K_{d}$ of $X^{\#}(t,0,x,v)$ lasts at most $D/\vert v \vert$. 
Actually, in $U$, the characteristics $\overline{X}$ are not straight lines since they are modified by the magnetic field. Let us prove nevertheless that if $\vert v \vert$ is large enough, then the particle can remain at most during a time $D/\vert v \vert$ in $U$. \par
Let $x \in U$, and $\frac{v}{\vert v \vert} \in \mathbb{S}_1$, let $\sigma \in (\frac{T}{4},\frac{3T}{4})$. By Lemma \ref{LemEpaissi}, there exists $s< \frac{D}{|v|} $ such that $X^{\#}(\sigma + s,\sigma,x,v) \in K$. Now we can evaluate as for a previous computation:
\begin{equation*}
\left| X^{\#}(\sigma+ s ,\sigma,x,v) - \overline{X}(\sigma + s ,\sigma,x,v) \right| \leq \overline{b} \frac{D^{2}}{2 |v|}.
\end{equation*}
We can choose $m$ large enough such that for any $\vert v \vert \geq m$, $ \overline{X}(\sigma + s ,\sigma,x,v) \in K_{d}$.
Hence at each passage of $X(t,0,x,v)$ in $\mathbb{T}^2 \backslash K_{d}$ lasts at most during a time $D/\vert v \vert$, which proves the claim. \par
This involves that there are at least $\lfloor \frac{T\vert v \vert }{4D} \rfloor -1$ passages in $U$, and therefore there are also at least $\lfloor \frac{T\vert v \vert }{4D}\rfloor -2$ passages in $K_{d}$. This is larger than $\frac{ T\vert v \vert }{8D}$ for $|v|$ large enough.  \par
\medskip
%
%
Now we denote by $t'$ a time for which $\overline{X}(t',0,x,v) \in K_{d}$, with $\overline{X}(t,0,x,v) \notin K_{d}$ for $t< t'$ and $t$ close to $t'$. Let us show that $\overline{X}(t'+s,0,x,v)$ remains in $K_{d}$ for $s \leq \frac{1}{4} \frac{d}{\vert v \vert}$, if the velocity is large enough. We have for all $s \in [0,\frac{1}{4} \frac{d}{\vert v \vert}]$,
\begin{equation*}
\vert X^{\#}(t'+s,t',x,v)- \overline{X}(t'+s,t',x,v)\vert \leq \overline{b} \vert v \vert \frac{\left(\frac{1}{4} \frac{d}{\vert v \vert}\right)^2}{2}.
\end{equation*}
On the other hand, by \eqref{GrandD}, each passage of $X^{\#}$ in $K_{d/2}$ lasts at least $\frac{d}{4\vert v \vert}$.  Hence we can choose $m$ large enough such that for any $\vert v \vert \geq m$, $\overline{X}(t'+s,t',x,v) \in K_{d}$ for $s \in [0,\frac{1}{4} \frac{d}{\vert v \vert}]$.

Consequently, $X(t,0,x,v)$ remains in $K_{d}$ during a time $\frac{T d}{32 D}$ inside $(\frac{T}{4},\frac{3T}{4})$, and we conclude as before.
\end{enumerate}
\ \\
{\bf 4.} {\it With a nontrivial additional force $\mathfrak{F}$.} 

Let us finally explain how one can take $\mathfrak{F}$ into account. 
First, we consider the equations for $|\overline{V}|$ and $\theta$, where $\theta$ is the angle between $v$ and $\overline{V}(t,0,x,v)$. The following computations are valid for $v$ large so that $|\overline{V}(t,0,x,v)|$ does not vanish and for a time interval where $\theta \in [-\pi/2,\pi/2]$.
\par

%
\begin{itemize}
\item  For what concerns $|V|$, it suffices to take the scalar product with $\overline{V}(t,0,x,v)$ of the equation of $\overline{V}$. We infer
\begin{equation*}
\frac{d}{dt} |\overline{V}(t,0,x,v)|^{2} = 2 \mathfrak{F} \cdot \overline{V}(t,0,x,v),
\end{equation*}
so that
\begin{equation} \label{EvolNorme}
\frac{d}{dt} |\overline{V}(t,0,x,v)| =  \frac{\mathfrak{F} \cdot \overline{V}(t,0,x,v)}{\vert \overline{V}(t,0,x,v) \vert}.
\end{equation}
In particular, for $m$ large enough, one has for all $(x,v) \in \mathbb{T}^2 \times \mathbb{R}^2$ \text{ with } $\vert v \vert \geq m$,
\begin{equation} \label{CompTailev}
\frac{|v|}{2} \leq |\overline{V}(t,0,x,v)| \leq 2 |v|.
\end{equation}
\item For what concerns $\theta$, taking the scalar product of the equation of $\overline{V}$ with $v$ we deduce
\begin{multline*}
\left(\frac{d}{dt} |\overline{V}(t,0,x,v)| \right) |v| \cos \theta(t) - |\overline{V}(t,0,x,v)||v|\theta'(t) \sin \theta(t) \\
= b(\overline{X}(t,0,x,v)) \overline{V}^\perp(t,0,x,v) \cdot v + \mathfrak{F} \cdot v.
\end{multline*}
Hence
\begin{multline*}
|\overline{V}(t,0,x,v)||v|\theta'(t) \sin \theta(t) \\ 
= b(\overline{X}(t,0,x,v)) |\overline{V}(t,0,x,v)| |v| \sin (\theta(t)) - \mathfrak{F} \cdot \left(v - \frac{\overline{V}(t,0,x,v) |v|}{|\overline{V}(t,0,x,v)|}\cos \theta(t)\right).
\end{multline*}
We notice that
\begin{equation*}
v - \frac{\overline{V}(t,0,x,v) |v|}{|\overline{V}(t,0,x,v)|}\cos \theta(t) = v - \frac{\overline{V}(t,0,x,v) \cdot v}{|\overline{V}(t,0,x,v)|^{2}} \overline{V}(t,0,x,v) = \mbox{p}_{\{ \overline{V}(t,0,x,v)\}^{\perp}} (v),
\end{equation*}
where $\mbox{p}_{\{ \overline{V}(t,0,x,v) \}^{\perp}} (v)$ denotes the orthogonal projection of $v$ on $\{ \overline{V}(t,0,x,v) \}^{\perp}$.
So
\begin{equation} \label{EvolAngle}
\theta'(t)  = b(\overline{X}(t,0,x,v))  + \frac{1}{|\overline{V}(t,0,x,v)|} \mathfrak{F} \cdot \frac{\mbox{p}_{\{ \overline{V}(t,0,x,v)\}^{\perp}} (v) }{|v|\sin \theta(t)}.
\end{equation}
Note that 
\begin{equation*}
| \mbox{p}_{\{ \overline{V}(t,0,x,v)\}^{\perp}} (v)| = | v | \, |\sin(\theta(t))|,
\end{equation*}
so that:
\begin{equation*}
\frac{1}{|\overline{V}(t,0,x,v)|} \left|\mathfrak{F} \cdot \frac{\mbox{p}_{\{ \overline{V}(t,0,x,v)\}^{\perp}} (v) }{|v|\sin \theta(t)}\right| \geq -\frac{1}{|\overline{V}(t,0,x,v)|} \Vert \mathfrak{F} \Vert_{\infty}.
\end{equation*}
\end{itemize}
\ \par
Now let us revisit the three sub-cases of Case 3 to include $\mathfrak{F}$.
\begin{enumerate}
\renewcommand{\labelenumi}{\alph{enumi}.}
\item Assume that $\overline{X}(t,0,x,v) \in K_{d}$ for all $t$ in a time interval of length at least equal to $\frac{\underline{b} T}{4}$. Then using \eqref{CompTailev} and \eqref{EvolAngle} we deduce
\begin{equation} \label{thetaprime}
\theta'(t) \geq \underline{b} - 2\frac{\| \mathfrak{F} \|_{\infty}}{m},
\end{equation}
so one can conclude as in the positive magnetic modulus case. \par
\item Assume more generally that ${\mathcal L}^{1}( \{ t \in [\frac{T}{4}, \frac{3T}{4}], \overline{X}(t,0,x,v) \in K_{d} \}) \geq T/4$. On $U$, one has $b \geq 0$, so the angle of $V(t,0,x,v)$ with $v$ satisfies
\begin{equation} \label{thetaprime2}
\theta'(t) \geq -  \frac{2}{m}{\| \mathfrak{F} \|_{\infty}},
\end{equation}
and \eqref{thetaprime} when $\overline{X}(t,0,x,v) \in K_{d}$. In total the variation of $\theta$ is no less than $\frac{\underline{b}T}{4} - \frac{T}{2m} \| \mathfrak{F} \|_{\infty}$, so one can conclude as previously (taking $m$ large enough).
\item We assume now that the previous cases do not hold. Then $X(t,0,x,v)$ remains in $\mathbb{T}^2 \backslash K_{d}$ at least during a time $ \frac{T}{4}$ inside $(\frac{T}{4},\frac{3T}{4})$. Let us compare the characteristics $({X},{V})$ associated to $\mathfrak{F}+b(x)v^{\perp}$ with the characteristics $(\overline{X},\overline{V})$ associated to the magnetic field $b(x)v^{\perp}$ alone. \par
\smallskip
Let $x \in U$, and $\frac{v}{\vert v \vert} \in \mathbb{S}_1$, and let let $\sigma \in (\frac{T}{4},\frac{3T}{4})$.  Using the analysis of case 3, there exists $t'< \frac{D}{|v|} $ such that $\overline{X}(\sigma+ t',\sigma,x,v) \in K_{d}$. Now comparing $(\overline{X},\overline{V})$ and $(X,V)$ and using Gronwall's inequality we deduce
\begin{equation} \label{Gronw}
\left\{ \begin{array}{l}
	|V(\sigma+ t',\sigma,x,v) -\overline{V}(\sigma+ t',\sigma,x,v)| \leq \|\mathfrak{F} \|_{\infty} \exp(\| b\|_{W^{1,\infty}} (1+2|v|) t'), \\
	|X(\sigma+ t',\sigma,x,v) -\overline{X}(\sigma+ t',\sigma,x,v)| \leq t' \|\mathfrak{F} \|_{\infty} \exp(\| b\|_{W^{1,\infty}} (1+2|v|) t').
\end{array} \right.
\end{equation}
Using that $|v|t'$ is of order $1$ and taking $m$ large enough, we see that for any $\vert v \vert \geq m$, $ X(\sigma+ t' ,\sigma,x,v) \in K_{3d/2}$.
Hence each passage of $X(t,0,x,v)$ in $\mathbb{T}^2 \backslash K_{3d/2}$ lasts at most $D/\vert v \vert$. 
We deduce as previously that there are at least $\lfloor \frac{T\vert v \vert }{4D}\rfloor -2$ passages of $X(t,0,x,v)$ in $K_{3d/2}$ during $(\frac{T}{4},\frac{3T}{4})$.
\par
\medskip
Now reasoning as in Case 3, using Gronwall's estimate \eqref{Gronw}, we see that if $X(\sigma,0,x,v) \in K_{3d/2}$, and $m$ is large enough, then $X(\sigma+t',0,x,v)$ remains in $K_{2d}$ for all times $t' < \frac{T d}{64 D}$, and we conclude as before.
%
%
\end{enumerate}
\end{proof}
\ \par
Now let us turn to the case of low velocities. This time we proceed as in the case of bounded force fields and prove that an analogue of Proposition \ref{PropAccelerePartout} holds:
\begin{prop}
\label{PropAccelerePartout-mag}
Let $\tau>0$ and $M>0$. There exists $\tilde{M}>0$, ${\mathcal E} \in C^{\infty}([0,\tau] \times \T^2;\R^{2})$ and  $\varphi \in C^{\infty}([0,\tau] \times \T^2;\R)$ satisfying
\begin{gather}
\label{EetPhi-mag}
{\mathcal E} = -\nabla \varphi \text{ in  } [0,\tau] \times (\T^2 \backslash B(x_{0},r_{0})), \\
\label{Phi2SupportTemps-mag}
\mbox{Supp}({\mathcal E}) \subset (0,\tau) \times \T^2, \\
\label{Phi2Harmonique-mag}
\Delta \varphi =0 \text{ in  } [0,\tau] \times (\T^2 \backslash B(x_{0},r_{0})), 
\end{gather}
such that, for any $\mathfrak{F} \in L^{\infty}(0,T;W^{1,\infty}(\T^{2} \times \R^{2}))$ satisfying $\| \mathfrak{F} \|_{L^{\infty}} \leq 1$,  if $({X},{V})$ are the characteristics corresponding the force
%
$\mathfrak{F} + {\mathcal E} + b(x)v^{\perp}$,
\begin{equation} 
\label{AccelerePartout-Mag}
\forall (x,v) \in \T^{2} \times B(0,M), \ 
{V}(\tau,0,x,v) \in B(0,\tilde{M}) \setminus B(0,M+1).
\end{equation}
\end{prop}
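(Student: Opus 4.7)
My plan is to follow the proof of Proposition \ref{PropAccelerePartout} essentially verbatim, exploiting two structural features of the Lorentz force $b(x)v^\perp$ to handle the fact that it is not bounded in velocity: it is orthogonal to $v$, which preserves the speed $|V|$ along any trajectory, and although $|b(X)V^\perp|$ may become large when $|V|$ is large, its \emph{time-integrated} effect over a short kick interval $[0,\tau']$ remains small, provided $\tau'$ is chosen after $\mathcal{C}$.

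Concretely, I invoke \cite[Lemma 3, p.~356]{OG03} to pick $\theta \in C^\infty(\T^2;\R)$ harmonic in $\T^2 \setminus B(x_0,r_0)$ with $|\nabla \theta| > 0$ there, and extend $-\nabla \theta|_{\T^2 \setminus B(x_0,r_0)}$ to a smooth non-vanishing vector field $W$ on $\T^2$. Taking $\Lambda \in C^\infty_0((0,1);\R^+)$ with $\int \Lambda = 1$, I set
\[
\mathcal{E}(t,x) := \frac{\mathcal{C}}{\tau'}\Lambda\!\left(\frac{t}{\tau'}\right)W(x),
\]
and define $\varphi$ consistently so that \eqref{EetPhi-mag}, \eqref{Phi2SupportTemps-mag} and \eqref{Phi2Harmonique-mag} follow automatically. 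Parameters $\mathcal{C}$ then $\tau'$ are to be determined. For $(x,v) \in \T^2 \times B(0,M)$, let $(X,V)$ be the characteristics of $\mathfrak{F} + \mathcal{E} + b(\cdot)v^\perp$ issued from $(x,v)$, and let $(\overline{X},\overline{V})$ be the characteristics of $\mathcal{E}$ alone. The identity $V \cdot V^\perp = 0$ yields $\frac{d}{dt}|V|^2 = 2V \cdot (\mathcal{E}+\mathfrak{F})$ and hence the $\tau'$-independent a priori bound
\[
|V(t)| \leq M + \mathcal{C}\|W\|_\infty\|\Lambda\|_{L^1} + t, \qquad t \in [0,\tau].
\]
As in Proposition \ref{PropAccelerePartout}, a direct computation gives $|\overline{V}(\tau')-v-\mathcal{C}W(x)| = O(\tau'^{2}\|\nabla\mathcal{E}\|_\infty) = O(\tau')$, and a Gronwall comparison over $[0,\tau']$ with perturbation $H(s) = \mathfrak{F}(s,X,V) + b(X)V^\perp(s)$, estimated along the trajectory by $\|H\|_\infty \leq 1 + \|b\|_\infty(M+\mathcal{C}\|W\|_\infty+\tau')$, produces
\[
|V(\tau')-\overline{V}(\tau')| \leq \tau'\bigl(1+\|b\|_\infty(M+\mathcal{C}\|W\|_\infty+\tau')\bigr)\,\exp\!\left(\tfrac{\tau'^{2}}{2}\|\nabla\mathcal{E}\|_\infty\right),
\]
which tends to $0$ as $\tau' \to 0$ with $\mathcal{C}$ fixed, the exponential being kept bounded by the sharp scaling $\tau'^{2}\|\nabla\mathcal{E}\|_\infty = O(\tau')$ flagged after Proposition \ref{PropAccelerePartout}.

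The main obstacle is exactly this unboundedness of $b(X)V^\perp$ during the kick, which is why the authors noted their high-velocity treatment cannot be reused here; the resolution lies in fixing the parameters in the correct order. First, using $|W| \geq c_W > 0$ on $\T^2$, I choose $\mathcal{C}$ large enough that $\mathcal{C}c_W \geq 2M+3+\tau\|\mathfrak{F}\|_\infty$; this forces $|\overline{V}(\tau')| \geq \mathcal{C}c_W - M - O(\tau') \geq M+2+\tau$ once $\tau'$ is small. Then I choose $\tau'$ small (in terms of the now-fixed $\mathcal{C}$) so that the Gronwall estimate above gives $|V(\tau')-\overline{V}(\tau')| \leq 1$, whence $|V(\tau')| \geq M+1+\tau$. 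On $[\tau',\tau]$ the field $\mathcal{E}$ has vanished, the magnetic force preserves $|V|$, and $\mathfrak{F}$ shifts $|V|$ by at most $\tau\|\mathfrak{F}\|_\infty \leq \tau$; hence $|V(\tau)| \geq M+1$. The upper bound $|V(\tau)| \leq \tilde{M} := M + \mathcal{C}\|W\|_\infty + \tau + 1$ is inherited directly from the a priori speed estimate, and \eqref{AccelerePartout-Mag} follows.
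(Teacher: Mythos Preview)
Your proof is correct and follows essentially the same approach as the paper: the same construction of $\mathcal{E}$ from \cite[Lemma~3]{OG03}, the same use of orthogonality $V\cdot V^\perp=0$ to obtain an a~priori speed bound, the same Gronwall comparison against the characteristics $(\overline{X},\overline{V})$ of $\mathcal{E}$ alone, and the same order of choice (fix $\mathcal{C}$, then shrink $\tau'$). The only notable difference is that your a~priori bound $|V(t)|\le M+\mathcal{C}\|W\|_\infty+t$, obtained by integrating $|\mathcal{E}|$ in time, is sharper and cleaner than the paper's exponential bound $|V|^2\le\max(1,|v|^2 e^{t(\|\mathfrak{F}\|_\infty+\|\mathcal{E}\|_\infty)})$ obtained via Gronwall on $|V|^2$; this makes the subsequent comparison estimate more transparent, but both versions lead to the same conclusion.
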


\begin{proof}[Proof of Proposition \ref{PropAccelerePartout-mag}]
Again, we introduce $\theta$ and ${\mathcal E}$ as in the proof of Proposition \ref{PropAccelerePartout}. Again, one can choose ${\mathcal C}$ and then $\tau'$ such that
\begin{equation*}
\forall (x,v) \in \T^{2} \times B(0,M), \  \overline{V}(\tau,0,x,v) \in \R^{2} \setminus B(0,M+2 + \tau \| \mathfrak{F} \|_{\infty}).
\end{equation*}
%
%
%
Let us denote by $(\overline{X},\overline{V})$ the characteristics corresponding to the force ${\mathcal E}$ alone. \par
We first observe that we have:
\begin{equation*}
\frac{d}{dt} \vert {V} \vert^2 = (\mathfrak{F}(s,{X},{V}) + {\mathcal E}(s,{X})) \cdot  {V}.
\end{equation*}
Thus, using Cauchy-Schwarz and Gronwall's estimates, we obtain:
\begin{equation*}
\vert  {V} \vert^2 \leq 
\max \left(1,  \vert  v \vert^2 e^{t (\| \mathfrak{F} \|_{\infty} + \| {\mathcal E} \|_{\infty})} \right).
\end{equation*}
We evaluate:
\begin{equation*}
| {X}(t,0,x,v) - \overline{X}(t,0,x,v) |
\leq \int_0^t | {V}(s,0,x,v) - \overline{V}(s,0,x,v) | \, ds,
\end{equation*}
\begin{align*}
| {V}(t,0,x,v) - \overline{V}(t,0,x,v) |
&\leq \int_0^t \Big[ | {\mathcal E} (s, {X}(s,0,x,v)) -{\mathcal E} (s,  \overline{X}(s,0,x,v)) | \\
& \hskip 3cm  + |\mathfrak{F}(t,{X},{V})| + {b} | {V}(s,0,x,v)^\perp |   \Big]ds \\
& \leq  \int_0^t \| \nabla {\mathcal E} \|_{\infty} (t-s) | {V}(s,0,x,v) - \overline{V}(s,0,x,v) | \, ds \\
& \hskip 2cm + \max\left(T,\frac{2M}{\| {\mathcal E} \|_{\infty} + \| \mathfrak{F} \|_{\infty}} (e^{\frac{t}{2}(\| {\mathcal E} \|_{\infty} + \| \mathfrak{F} \|_{\infty})}  -1)\right).
\end{align*}
By Gronwall's inequality:
\begin{equation}
| {V}(t,0,x,v) - \overline{V}(t,0,x,v) | \leq  \max\left(T/2,\frac{2M}{\| {\mathcal E} \|_{\infty} + \| \mathfrak{F} \|_{\infty}} (e^{\frac{t}{2}(\| {\mathcal E} \|_{\infty} + \| \mathfrak{F} \|_{\infty})}-1)\right)   e^{\frac{t^2}{2} \| \nabla^2 \varphi \|_{\infty}}.
\end{equation}

For $t= \tau'$, we have: 
\begin{equation*}
\| \nabla {\mathcal E} \|_{\infty} = \frac{C}{\tau'}, \quad  \| {\mathcal E} \|_{\infty} = \frac{C'}{\tau'},
\end{equation*}
where $C$ and $C'$ depend only on $\omega, M$, and the conclusion follows as previously since
\begin{equation*}
\big|\, |V(\tau,0,x,v)| - |V(\tau',0,x,v)|\,\big| \leq |\tau - \tau'| \| \mathfrak{F} \|_{\infty}.
\end{equation*}
\end{proof}
%
%
%
\noindent
{\bf The reference solution.} Let us now describe the reference solution. Consider $x_0$ in $\omega$ and $r_0>0$ such that $B(x_0,2r_0) \subset \omega$. 
We define the reference potential $\overline{\phi}:[0,T] \times \T^{2} \rightarrow \R$ as follows. We apply Proposition \ref{PropSolRefCM} with $\tau =T/3$, we obtain some $\underline{m}>0$ such that \eqref{GVCM} is satisfied. Then we apply Proposition \ref{PropAccelerePartout-mag} with $\tau =T/3$ and
\begin{equation} \label{DefMHorrible}
M= \max \Big( \underline{m} + \frac{T}{3}, 100, \frac{800 r_{0}}{T}, 32 r_{0} (\overline{b}+1) \Big),
\end{equation}
and obtain some $\overline{\phi}_{2}$, $\overline{{\mathcal E}}_{2}$ and some $\tilde{M}>0$ such that \eqref{AccelerePartout-Mag} is satisfied. We set
\begin{equation*}
\overline{\phi}(t,\cdot) = \left\{ \begin{array}{l}
 0 \text{ for } t \in [0,\frac{T}{3}] \cup [\frac{2T}{3},T], \\
 \overline{\phi}_{2}(t - \frac{T}{3},\cdot) \text{ for } t \in [\frac{T}{3},\frac{2T}{3}],
\end{array} \right.
\end{equation*}
and
\begin{equation*}
\overline{{\mathcal E}}(t,\cdot) = \left\{ \begin{array}{l}
 0 \text{ for } t \in [0,\frac{T}{3}] \cup [\frac{2T}{3},T], \\
 \overline{{\mathcal E}}_{2}(t - \frac{T}{3},\cdot) \text{ for } t \in [\frac{T}{3},\frac{2T}{3}].
\end{array} \right.
\end{equation*}
Then once defined $\overline{\varphi}$, we define $\overline{f}:[0,T] \times \T^{2} \times \R^{2}$ as previously by \eqref{DefZ}-\eqref{Deffbar}.
\subsection{Proof of Theorem \ref{Theo:Mag}} 
We consider ${\mathcal S}_{\varepsilon}$ the same convex set as in the proof of Theorem \ref{Theo:Bounded}, and ${\mathcal V}$ the same fixed point operator with $F=b(x)v^{\perp}$. As before, the proof consists in proving first the existence of a fixed point, and in a second time in proving that such a fixed point is relevant. \par
For what concerns the existence of a fixed point we have:
\begin{lem} \label{Lem:ExistFP2}
There exists $\epsilon_0>0$ such that for any $0<\epsilon<\epsilon_0$, there exists a fixed point of ${\mathcal V}$ in $\mathcal{S}_\epsilon$.
\end{lem}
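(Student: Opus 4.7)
The plan is to apply Schauder's fixed-point theorem to $\mathcal{V}$ on $\mathcal{S}_\epsilon$, exactly as in the proof of Lemma \ref{Lem:ExistFP}, this time with $F=b(x)v^\perp$. Three things must be checked: the convex compactness of $\mathcal{S}_\epsilon$ in $C^0([0,T]\times\T^2\times\R^2)$, the continuity of $\mathcal{V}$ on $\mathcal{S}_\epsilon$, and the stability $\mathcal{V}(\mathcal{S}_\epsilon)\subset\mathcal{S}_\epsilon$. The first two items transfer unchanged from Section \ref{Sec:BEF}: convex compactness follows from Ascoli combined with the uniform H\"older and weighted $L^\infty$ controls built into $\mathcal{S}_\epsilon$, and continuity is a Gronwall argument on the characteristics of $F+\nabla\phi^{g_n}-\nabla\overline{\phi}+\overline{\mathcal{E}}$ (the linear-in-$v$ growth of $F$ is harmless when testing pointwise convergence on fixed compacts of velocity), combined with the same analysis of crossing times on $\gamma^{-}$.

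For the stability, condition \textbf{d.} is immediate from the neutrality property \eqref{NeutralitedePi} of $\Pi$. Condition \textbf{b.} relies on the magnetic analogue of Lemma \ref{LemCrucial}: differentiating $|V|^{2}$ along a characteristic and using the orthogonality $b(x)V^{\perp}\cdot V=0$, one obtains
\[
\bigl|\,|v|-|V(t,0,x,v)|\,\bigr| \leq t\,\|\nabla\phi^{g}+\overline{\mathcal{E}}-\nabla\overline{\phi}\|_\infty,
\]
which is uniformly bounded for $g\in\mathcal{S}_\epsilon$, $\epsilon\leq 1$. The computation of Section \ref{Sec:BEF} then yields $(1+|v|)^{\gamma}|\tilde{\mathcal{V}}[g](t,x,v)|\leq C\|(1+|v|)^{\gamma}f_{0}\|_{\infty}$, and \textbf{b.} for $\mathcal{V}[g]$ follows from the boundedness of $\Pi$ in the weighted space.

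The main obstacle is condition \textbf{c.}, the analogue of Lemma \ref{LemmeRegHV}. The novel difficulty is twofold: the variational equation for the characteristics has a coefficient $\nabla_{x}F=\nabla b(x)\otimes v^\perp$ of linear growth in $|v|$, and each absorption on $\gamma^{-}$ produces a jump $|\nabla\tilde{\mathcal{V}}[g](t^{+})|\leq|\nabla\tilde{\mathcal{V}}[g](t^{-})|+C|\tilde{\mathcal{V}}[g](t^{-})|$. My plan is to split the trajectory analysis into a low-velocity phase, where $b(x)v^{\perp}$ is bounded, and a high-velocity phase (after acceleration by $\overline{\mathcal{E}}$), where Proposition \ref{PropSolRefCM} guarantees that the particle reaches $\gamma^{3-}$ within a short time of order $1/|v|$ and is fully absorbed there (since $U\equiv 0$ on $\gamma^{3-}$). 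In each regime, the cumulative Jacobian of the flow remains polynomial in $|v|$, and the crossing count satisfies $n(x,v)\leq C(1+|v|)$ by $\dist(\gamma^{-},\gamma^{+})>0$ combined with the control of $|V|$ just obtained. Plugging these two bounds into the jump formula and interpolating the resulting polynomial gradient estimate against the pointwise weighted bound from \textbf{b.} recovers the uniform H\"older estimate with exponent $\delta_{2}=\gamma/(\gamma+1)$.

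Finally, condition \textbf{a.} is recovered from \textbf{b.} and \textbf{c.} by the same interpolation in the weighted H\"older scale as in Section \ref{Sec:BEF}, provided $\epsilon$ is small enough. This completes the verification of Schauder's theorem and hence yields the existence of a fixed point in $\mathcal{S}_\epsilon$.
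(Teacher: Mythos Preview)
Your overall scheme---Schauder on $\mathcal{S}_\epsilon$, then verify compactness, continuity, and the four stability conditions---is exactly what the paper does, and your treatment of conditions \textbf{a}, \textbf{b}, \textbf{d} matches the paper's. In particular, the paper's only explicit remark is that the analogue of Lemma~\ref{LemCrucial} survives via \eqref{EvolNorme}, which is precisely your orthogonality computation for \textbf{b}.

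Where you go beyond the paper is condition \textbf{c}. You correctly flag that $\nabla_x F=\nabla b(x)\otimes v^\perp$ grows linearly in $|v|$, so a naive Gronwall on the variational system yields a Jacobian bound that is exponential rather than polynomial in $|v|$. The paper does not address this point; it simply asserts the proof is identical. Your proposed remedy, however, contains a gap. You claim that in the high-velocity phase the particle reaches $\gamma^{3-}$ ``within a short time of order $1/|v|$'', but Proposition~\ref{PropSolRefCM} says no such thing: it only guarantees arrival in $B(x_0,r_0/2)$ at \emph{some} time in a fixed interval of length comparable to $T$. The final passage from $B(x_0,r_0/2)$ across $S(x_0,r_0)$ is indeed $O(1/|v|)$, but the preceding sojourn at high velocity is not. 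Moreover, your phase decomposition does not cover trajectories that \emph{start} with $|v|\geq M$: for those there is no low-velocity phase at all, and the characteristic spends time of order $T$ at speed $\sim|v|$ before any absorption can occur. On that stretch the variational system carries a coefficient of size $|v|$, and Gronwall gives at best a bound like $\exp(C\sqrt{|v|})$, which does not interpolate against \textbf{b} to produce a uniform $C^{\delta_2}$ estimate. So the argument for \textbf{c} as written does not close; either a sharper structural estimate on the magnetic variational flow is needed, or a different route to the H\"older bound.
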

\begin{proof}[Proof of Lemma \ref{Lem:ExistFP2}]
The proof of Lemma \ref{Lem:ExistFP2} is exactly the same as the one of Lemma \ref{Lem:ExistFP} and is therefore omited. Note in particular that a variant of the crucial Lemma \ref{LemCrucial} is still valid here, using \eqref{EvolNorme}.
\end{proof}
In the second part of the proof we show that a fixed point is relevant. In this part lies the main difference with Theorem \ref{Theo:Bounded}. This is given by the following lemma.
\begin{lem} \label{LemCaracRencontre2}
There exists $\epsilon_1>0$ such that for any $0<\epsilon<\epsilon_1$, all the characteristics $(X,V)$ associated to $b(x)v^{\perp} + \overline{{\mathcal E}} - \nabla \overline{\phi} + \nabla \phi^{f}$, where $f$ is a fixed point of ${\mathcal V}$ in ${\mathcal S}_{\varepsilon}$, meet $\gamma^{3-}$ for some time in $[\frac{T}{12},\frac{11T}{12}]$.
\end{lem}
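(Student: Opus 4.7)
The plan is to follow the architecture of the proof of Lemma \ref{LemCaracRencontre}, replacing the two uses of Proposition \ref{PropGV} (which exploited the shortness of $T/3$ and the closeness of the actual trajectories to free transport) by the intrinsically magnetic statement Proposition \ref{PropSolRefCM}, while keeping Proposition \ref{PropAccelerePartout-mag} for the low-velocity step. A key difference with the bounded-force case is that a Gronwall comparison of $(X,V)$ with a reference flow is not viable, because the Lipschitz constant of $b(x)v^{\perp}$ in $x$ grows linearly with $|v|$; I therefore apply Propositions \ref{PropSolRefCM} and \ref{PropAccelerePartout-mag} directly to $(X,V)$, absorbing the perturbation $\nabla\phi^{f}-\nabla\overline\phi$ into the $\mathfrak F$ slot of these propositions. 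Point $\mathbf a$ in the definition of $\mathcal S_{\varepsilon}$ and elliptic regularity for \eqref{defphiPoisson} first give $\|\nabla\phi^{f}-\nabla\overline\phi\|_{L^{\infty}(\Omega_{T})}\leq C\varepsilon$, which I may assume $\leq 1$.

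Next, I dichotomize on $|v|$, with $M$ as in \eqref{DefMHorrible} (enlarged by a further additive constant if necessary so that the bookkeeping below goes through). If $|v|\geq M$, then $|v|\geq\underline m$, and since $\overline{\mathcal E}=0$ and $\overline\phi=0$ on $[0,T/3]$, the force along $(X,V)$ on this interval is $b(x)v^{\perp}+\nabla\phi^{f}$; Proposition \ref{PropSolRefCM} applied with $\tau=T/3$ and $\mathfrak F=\nabla\phi^{f}$ yields $t^{*}\in(T/12,T/4)$ with $X(t^{*},0,x,v)\in B(x_{0},r_{0}/2)$ and $|V(t^{*},0,x,v)|\geq|v|/2\geq M/2$. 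If $|v|<M$, integrating \eqref{EvolNorme} gives $|V(T/3,0,x,v)|\leq|v|+T/3\leq M$; I then apply Proposition \ref{PropAccelerePartout-mag} on $[T/3,2T/3]$ with $\mathcal E=\overline{\mathcal E}_{2}$ and $\mathfrak F=\nabla\phi^{f}-\nabla\overline\phi_{2}$, obtaining $|V(2T/3,0,x,v)|\geq M+1\geq\underline m$, and finally Proposition \ref{PropSolRefCM} on $[2T/3,T]$ produces $t^{*}\in(3T/4,11T/12)$ with $X(t^{*},0,x,v)\in B(x_{0},r_{0}/2)$ and $|V(t^{*},0,x,v)|\geq(M+1)/2$.

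It remains to pass from $B(x_{0},r_{0}/2)$ to $\gamma^{3-}$ by a straight-line backtrack exactly as in step 1 of Lemma \ref{LemCaracRencontre}. The straight line $\tau\mapsto X(t^{*})-(t^{*}-\tau)V(t^{*})$ exits $B(x_{0},r_{0}/2)$ and meets $S(x_{0},r_{0})$ at some $\bar\tau\in[t^{*}-3r_{0}/|V(t^{*})|,t^{*}]$ with $V(t^{*})\cdot\nu\leq-(\sqrt{3}/2)|V(t^{*})|$, the line meeting $S(x_{0},r_{0})$ at angle at most $\pi/6$ from the normal. To replace the straight line by the genuine characteristic I use that on the backtrack window the acceleration is bounded by $\overline b|V|+O(1)\leq 2\overline b|V(t^{*})|+O(1)$, which yields a position deviation at most $C\overline b r_{0}^{2}/|V(t^{*})|$ and a velocity-direction deviation at most $C\overline b r_{0}/|V(t^{*})|$. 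The lower bound $|V(t^{*})|\geq M/2\geq 16 r_{0}(\overline b+1)$ built into \eqref{DefMHorrible} makes both deviations negligible relative to $r_{0}$ and to $|V(t^{*})|$; an intermediate-value argument then produces $\sigma$ close to $\bar\tau$ with $X(\sigma)\in S(x_{0},r_{0})$, $V(\sigma)\cdot\nu\leq-|V(\sigma)|/5$ and $|V(\sigma)|\geq 2$. The backtrack time is $O(r_{0}/M)\ll T$, so $\sigma\in[T/12,11T/12]$.

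The main obstacle is the magnetic acceleration of magnitude $\overline b|V|$: this is what forbids a clean Gronwall comparison with a reference flow and what dictates the sharp scaling in $M$. The architecture above avoids the Gronwall issue precisely by invoking Propositions \ref{PropSolRefCM} and \ref{PropAccelerePartout-mag} themselves (which already handle the magnetic term internally), and the straight-line deviation of size $\overline b r_{0}^{2}/|V(t^{*})|$ in Step 3 is controlled exactly by the lower bound on $|V(t^{*})|$ encoded in \eqref{DefMHorrible}, consistently with the sharp nature of the bound \eqref{scalingphitemps} flagged in the remark after Proposition \ref{PropAccelerePartout}.
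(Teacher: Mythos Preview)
Your proposal is correct and follows essentially the same route as the paper: absorb $\nabla\phi^{f}-\nabla\overline\phi$ into the $\mathfrak F$ slot of Propositions~\ref{PropSolRefCM} and~\ref{PropAccelerePartout-mag} (rather than attempting a Gronwall comparison of flows), then backtrack from $B(x_{0},r_{0}/2)$ to $\gamma^{3-}$ using the largeness of $|V|$ to control the magnetic curvature, exactly via estimates of the type \eqref{XV1}--\eqref{XV3}. The only cosmetic difference is that the paper dichotomizes on $|V(T/3,0,x,v)|$ rather than on $|v|$, which sidesteps your inequality ``$|v|+T/3\leq M$'' (not literally valid for $|v|$ close to $M$) without needing to retune the $M$ of the already-constructed reference solution.
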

\begin{proof}[Proof of Lemma \ref{LemCaracRencontre2}]
\ \par
\noindent
%
%
We begin by noticing that $\nabla \varphi^{f}- \nabla \overline{\phi}$ satisfies 
\begin{equation} \label{croix}
\| \nabla \varphi^{f} - \nabla \overline{\phi} \|_{\infty} \leq 1,
\end{equation}
provided that $\varepsilon$ is small enough, which we suppose from now. Consequently we can apply Propositions \ref{PropSolRefCM} and \ref{PropAccelerePartout-mag} to ${\mathfrak F}:= \nabla \varphi^{f}- \nabla \overline{\phi}$. 

It follows that any $(x,v) \in \T^{2} \times \R^{2}$ is (at least) in one of the following situations:
\begin{itemize}
\item If $|V(\frac{T}{3},0,x,v)| \geq M$, then using \eqref{croix}, we deduce $|v| \geq \underline{m}$. Hence
there exists $\tau \in [\frac{T}{12}, \frac{3T}{12}]$ such that
\begin{equation} \label{Danslaboule2}
{X}(\tau,0,x,v) \in B(x_{0},r_{0}/2),
\end{equation}
and reasoning as for \eqref{CompTailev} we deduce that for all $s \in [0,\frac{T}{3}]$ one has
\begin{equation} \label{GrosseVitesse}
 |{V}(s,0,x,v))| \geq \frac{M}{2},
\end{equation}
where $M$ was defined in \eqref{DefMHorrible}.
\item Or $|V(\frac{T}{3},0,x,v)| < M$, so  $|V(\frac{2T}{3},0,x,v)| \geq M+1$, and there exists $\tau \in [\frac{9T}{12}, \frac{11T}{12}]$ such that \eqref{Danslaboule2} is true and \eqref{GrosseVitesse} is valid for all $s \in [\frac{2T}{3},T]$.
\end{itemize}
Let us consider $(x,v)$ in the first situation, the reasoning being identical for the second situation. As in the proof of Lemma \ref{LemCaracRencontre}, we deduce the existence of some $s >0$ with $s < \frac{4 r_{0}}{|v|} \leq \frac{T}{100}$,
\begin{equation} \label{Triangle}
{X}(\tau,0,x,v) - s {V}(\tau,0,x,v) \in S(x_{0}, \frac{3 r_{0}}{2}) \text{ with } {V}(\tau,0,x,v).\nu \leq - \frac{\sqrt{3}}{2} | {V}(\tau,0,x,v)|.
\end{equation}
Let us show that this involves for $|v|$ large enough the existence of $\tau_{*} \in [\tau,t]$ such that
\begin{equation*}
x_{*}:={X}(\tau,0,x,v) - (\tau_{*}-\tau){V}(\tau,0,x,v) \in S(x_{0}, r_{0}).
\end{equation*}
We have for $\sigma \in [\tau-s,\tau]$:
\begin{gather}
\label{XV1}
\frac{M}{2} \leq |{V}(\sigma,0,x,v)| \leq 2 |v|, \\
\label{XV2}
\left| \frac{{V}(\sigma,0,x,v)}{|{V}(\sigma,0,x,v)|}
- \frac{{V}(\tau,0,x,v)}{|{V}(\tau,0,x,v)|} \right| \leq s \Big[ \overline{b} + \frac{2 \| \nabla \varphi^{f} \|_{\infty}}{M}\Big], \\
\label{XV3}
|{X}(\sigma,0,x,v) -{X}(\tau,0,x,v) +(\tau-\sigma) {V}(\tau,0,x,v) | \leq \frac{s^{2}}{2} (2|v| + \| \nabla \varphi^{f} \|_{\infty}).
\end{gather}
Estimate \eqref{XV2} comes from the identity
\begin{equation*}
\frac{d}{d \sigma} \left( \frac{V(\sigma,0,x,v)}{|V(\sigma,0,x,v)|}\right) = \frac{ \frac{d V}{d \sigma} (\sigma,0,x,v)}{|V(\sigma,0,x,v)|} + \frac{\nabla \varphi^{f}(\sigma,x,v) \cdot V(\sigma,0,x,v)}{|V(\sigma,0,x,v)|^{3}} V(\sigma,0,x,v).
\end{equation*}
Let us check that this involves the existence of $t \in [\tau,\tau-s]$ such that $(X(t,0,x,v),V(t,0,x,v)) \in \gamma^{3-}$. The existence of of $t \in [\tau,\tau-s]$ such that $X(t,0,x,v) \in S(x_{0},r_{0})$ follows from \eqref{XV3} and
\begin{equation*}
\frac{s^{2}}{2} (2|v| + \| \nabla \varphi^{f} \|_{\infty}) \leq \frac{8 r_{0}}{|v|^{2}} (2|v| +1) \leq 8 r_{0} \frac{2M+1}{M^{2}}  \leq \frac{24 r_{0}}{M}  \leq \frac{r_{0}}{4}.
\end{equation*}
At such a $t$, from \eqref{XV1}, we have $|V(t,0,x,v)| \geq 2$ since $M \geq 4$. \par
The fact that at such a moment $t$, one has $V(t,0,x,v) . \nu(X(t,0,x,v)) \leq -\frac{1}{5} |V(t,0,x,v,)|$ comes from
\begin{align*}
\Big| \frac{V(t,0,x,v)}{|V(t,0,x,v)|} \cdot \nu(X(t,0,x,v)) & - \frac{V(\tau,0,x,v)}{|V(\tau,0,x,v)|} \cdot \nu(x_{*})  \Big| \\
&\leq 
\Big| \frac{V(t,0,x,v)}{|V(t,0,x,v)|} - \frac{V(\tau,0,x,v)}{|V(\tau,0,x,v)|} \Big| 
+
\Big| \nu(X(t,0,x,v)) - \nu(x_{*}) \Big| \\
&\leq (\overline{b} + \frac{2}{M}) \frac{4r_{0}}{|v|}
+
\frac{1}{r_{0}} |X(t,0,x,v) - x_{*}| \\
&\leq (\overline{b} + 1) \frac{4r_{0}}{M}
+
\frac{24}{M} \leq \frac{1}{4},
\end{align*}
and from \eqref{Triangle}.
This concludes the proof of Lemma \ref{LemCaracRencontre2}. \par
\end{proof}
Let us finally gather all the pieces to prove Theorem  \ref{Theo:Mag}.
\begin{proof}[Proof of Theorem \ref{Theo:Mag}]
Using Lemma \ref{Lem:ExistFP2}, we deduce the existence of some fixed point $f$ of ${\mathcal V}$ in ${\mathcal S}_{\varepsilon}$. Using Lemma \ref{LemCaracRencontre2} we can again use the definitions \eqref{DefU}, \eqref{DefUpsilon} and \eqref{EqLin3} to deduce that $\mbox{Supp\,}[f(T,\cdot,\cdot)] \subset \omega \times \R^{2}$ and one checks that $f$ satisfies the equation for some $G$ as previously. This concludes the proof of Theorem \ref{Theo:Mag}. \par
\end{proof}
\begin{ack}
O. G. is partially supported by the Agence Nationale de la Recherche (ANR-09-BLAN-0213-02). He wishes to thank Institut Henri Poincar\'e (Paris, France) for providing a very  stimulating environment during the ``Control of Partial Differential Equations and Applications'' program in the Fall 2010.
D. H.-K. acknowledges the support of the Agence Nationale de la Recherche (project ANR-08-BLAN-0301-01). He also thanks Colin Guillarmou for some interesting discussions on magnetic fields.
\end{ack}

 \bibliographystyle{plain}
 \bibliography{controlVP}
 
\end{document}